\newtheorem{prop}{Proposition}[section]
\numberwithin{equation}{section}
\newcommand{\beq}{\begin{eqnarray}}
\newcommand{\beqq}{\begin{eqnarray*}}
\newcommand{\eeq}{\end{eqnarray}}
\newcommand{\eeqq}{\end{eqnarray*}}
\newtheorem{theorem}{Theorem}[section]
\newtheorem{lemma}{Lemma}[section]
\definecolor{link-color}{rgb}{0.15,0.4,0.15}
\renewcommand{\P}{\mathbb{P}}
\newcommand{\E}{\mathbb{E}}
    \def\d{{\textnormal d}}
\newenvironment{eqnarr}{\begin{IEEEeqnarray}{rCl}}{\end{IEEEeqnarray}\ignorespacesafterend}
\renewcommand{\eqref}[1]{\hyperref[#1]{(\ref*{#1})}}
\newcommand*{\norm}[1]{\lVert #1 \rVert}
    \def\beq{\begin{eqnarr}}
    \def\eeq{\end{eqnarr}}
    \def\beqq{\begin{eqnarray*}} 
    \def\eeqq{\end{eqnarray*}} 
        \def\d{{\rm d}}
    \def\d{{\textnormal d}}
\newtheorem{remark}{Remark}[section]
\newcommand*{\pref}[1]{\hyperref[#1]{(\ref*{#1})}}
\newcommand*{\refpref}[2]{\hyperref[#2]{\ref*{#1}(\ref*{#2})}}
\def\namedlabel#1#2{\begingroup
    #2%
    \def\@currentlabel{#2}%
    \phantomsection\label{#1}\endgroup
}
  \newcommand{\D}{{\rm d}}
\newcommand{\U}{\texttt U}
\newcommand{\bS}{\texttt S}
\newcommand{\bF}{\texttt F}
\newcommand{\up}{\upsilon}
\def\d{{\textnormal d}}
\newcommand{\G}{\mathscr G}
\newcommand{\bP}{\mathtt{P}}
\renewcommand{\d}{{\rm d}}
\renewcommand{\P}{\mathbb{P}}
\newcommand{\VV}{\mathsf{V}\!\!\!\mathsf{V}}
\newcommand{\V}{\mathscr V}
\definecolor{amethyst}{rgb}{0.6, 0.4, 0.8}
\definecolor{applegreen}{rgb}{0.55, 0.71, 0.0}
\definecolor{aqua}{rgb}{0.0, 1.0, 1.0}
\definecolor{asparagus}{rgb}{0.53, 0.66, 0.42}
\definecolor{amber(sae/ece)}{rgb}{1.0, 0.49, 0.0}
 	\definecolor{armygreen}{rgb}{0.29, 0.33, 0.13}
	\definecolor{shitbrown}{rgb}{0.43, 0.21, 0.1}
	\definecolor{brightpink}{rgb}{1.0, 0.0, 0.5}
	\definecolor{brightube}{rgb}{0.82, 0.62, 0.91}
	 	\definecolor{byzantine}{rgb}{0.74, 0.2, 0.64}
		\definecolor{chartreuse(web)}{rgb}{0.5, 1.0, 0.0}
\title{Yaglom limit for  critical neutron transport}
\author{S. C. Harris\thanks{Department of Statistics,
University of Auckland,
Private Bag 92019,
Auckland 1142,
New Zealand.
Email: 
\texttt{simon.harris@auckland.ac.uk}
}, 
\ 
E. Horton\thanks{INRIA,
Bordeaux Research Centre,
33405 Talence,
France.
Email: 
\texttt{emma.horton@inria.fr}
}, 
\
A. E. Kyprianou\thanks{Department of Mathematical Sciences, University of Bath, Claverton Down, Bath, BA2 7AY, UK. Email: \texttt{a.kyprianou@bath.ac.uk}}, \
 and. 
 M. Wang\thanks{School of Mathematical,
 and Physical Sciences,
University of Sussex,
Sussex House, Falmer,
Brighton, BN1 9RH.
UK.
Email: 
\texttt{wang@sussex.ac.uk}
}
}
\begin{document}

\maketitle

\begin{abstract}
We consider the classical Yaglom limit theorem for a branching Markov process $X = (X_t, t \ge 0)$, with non-local branching mechanism in the setting that the mean semigroup is critical, i.e. its leading eigenvalue is zero. In particular, we show that there exists a constant $c(f)$ such that
\[
  {\rm Law}\left(\frac{\langle f, X_t\rangle}{t} \bigg| \langle 1, X_t\rangle > 0 \right) \to {\mathbf e}_{c(f)}, \qquad t \to \infty,
\]
where ${\mathbf e}_{c(f)}$ is an exponential random variable with rate $c(f)$ and the convergence is in distribution. 
As part of the proof, we also show that the probability of survival decays inversely proportionally to time. Although Yaglom limit theorems have recently been handled in the setting of branching Brownian motion in a bounded domain and superprocesses, \cite{Ellen, Yanxia}, these results do not allow for non-local branching, which complicates the analysis. 
Our approach and the main novelty of this work is based around a precise result for the scaled asymptotics for the $k$-th martingale moments of $X$ (rather than the Yaglom limit itself). We then illustrate our results in the setting of neutron transport, for which the non-locality is essential, complementing recent developments in this domain \cite{SNTE, SNTEII, SNTEIII, MCNTE, MultiNTE}.
\medskip

\noindent {\bf Key words:} Neutron Transport Equation, branching Markov process, semigroup theory, Perron-Frobenius decomposition, Yaglom limit, quasi-stationary limit.
\medskip

\noindent {\bf Mathematics Subject Classification:}  Primary 82D75, 60J80, 60J75. Secondary 60J99
\end{abstract}

\section{Introduction and main results}\label{intro} 

In this article, we study the long-term behaviour of a class of critical spatial Markov branching processes (MBP) on a general state space $E$, with non-local branching mechanism and probabilities $\P_{\delta_x}$ when issued from a single particle at $x \in E$. {Roughly speaking, what we mean by critical here is that the leading eigenvalue of the 
linear semigroup of the MBP is zero.} Our objective in this article is to show that, like all other critical branching processes with finite variance offspring distributions, MBPs also respect a so-called Yaglom limit. The latter can be thought of as consisting  of two components, stated precisely in Theorems \ref{lineardecaygen} and \ref{Ygen} below, which say that for a MBP $(X_t, t \ge 0)$, for each $x \in E$ and for each bounded measurable function $f$ on $E$, there exist constants $c_1(x), c(f) \in (0, \infty)$ such that
\begin{equation}
\textstyle \lim_{t\to\infty}t \mathbb{P}_{\delta_{x}}(\langle 1, X_t \rangle > 0) = c_1(x),
\label{result1}
\end{equation}
and
\begin{equation}
\text{Law}\left(\left.\frac{ \langle f, X_t \rangle}{t}\right| \langle 1, X_t\rangle> 0\right) \text{ converges  weakly to } \mathbf{e}_{c(f)},
\label{result2}
\end{equation}
as $t\to\infty$ where $\mathbf{e}_{c(f)}$ is an exponentially distributed random variable with rate $c(f)$, or equivalently with mean $1/c(f)$.

\medskip

Yaglom's limit theorem is very classical for Galton-Watson processes, cf. \cite{AthreyaNey},  and a cornerstone of the theory of branching processes. Remarkably, it is only recently that Yaglom limit theorems have found their way to the literature for spatial branching processes, for example \cite{Ellen} in the setting of branching Brownian motion and \cite{Yanxia, RSZ, RSS2} in the setting of superprocesses. Unfortunately none of the approaches taken there can be transferred to the current setting on account of the fact that there is non-local branching, which complicates calculations significantly. A large component of our proof of \eqref{result1} and \eqref{result2} (stated more precisely in Theorems \ref{lineardecaygen} and \ref{Ygen} below) appeals to a new combinatorial methodology which is necessary on account of the spatial inhomogeneity and non-local nature of the branching mechanism. Specifically, our new approach and the main novelty of this work is based around an inductive argument, which describes the convergence of all of the associated martingale moments of the MBP. For the proof of the latter, we make use of the so-called spine decomposition, in order to extract the leading order terms using a combinatorial decomposition.


\medskip



Let us now formally define our class of MBPs. Let $E$ be a Lusin space. We will write $L_\infty^+(E)$ for the space of non-negative measurable functions, $f$, on $E$ such that  ${\textstyle\Vert f\Vert := \sup_{x \in E}f(x) < \infty}$ and $L_{\infty, 1}^+(E)$ for the set of functions $f \in L_\infty^+(E)$ such that $\Vert f\Vert \le 1$.
A  $(\bP, \Phi)$-MBP, $(X_t, t\geq0)$, is an atomic measure-valued stochastic process in which particles move around independently in $E$ according to a copy of the Markov process associated to $\bP = (\bP_t, t \ge 0)$. When a particle is positioned at $x\in E$, at the instantaneous rate $\gamma(x) \ge 0$, the process will branch and a random number of  offspring, say $N$, are thrown out in positions, say $x_1,\cdots, x_N$ in $E$, according to the law $\mathcal{P}_x$. Such branching events are described by the branching mechanism $\Phi$, defined via
\[
\Phi(x, f) = \gamma(x)\mathcal{E}_x\left(\prod_{i = 1}^N f(x_i) - f(x)\right), \qquad f\in L^+_{\infty, 1}(E), x\in E.
\]

 It is well known that if the configuration of particles at time t is denoted by ${x_1(t), . . . , x_{N_t}(t)}$, where $N_t$ denotes the number of particles alive at time $t \ge 0$, the MBP can be described via the co-ordinate process in the space of atomic measures on $E$, denoted by $\mathcal{M}(E)$, given by
\begin{equation}
X_t(A) = \sum_{i = 1}^{N_t} \delta_{x_i(t)}(A), \quad t \ge 0, A \in \mathcal{B}(E),
\label{empdist}
\end{equation}
where $\delta$ is the Dirac measure, defined on $\mathcal{B}(E)$, the Borel subsets of $E$. 
For convenience, we will write 
\begin{equation}
\mathcal{Z}(A) = \sum_{i =1}^N\delta_{x_i}(A), \qquad A \in\mathcal{B}(E).
\label{PPgen}
\end{equation}

\medskip

{\color{black}We will tend to work with the process $X$ by describing the behaviour of $\langle f, X_t\rangle$, $t\geq 0$, for 
$f\in L^+_\infty(E)$, where $\textstyle \langle f, X_t\rangle : =  \int_{E}f(x)X(\d x)= \sum_{i = 1}^{N_t} f(x_i(t))$. Having introduced this inner product notation, we note that we will immediately abuse it by writing e.g. $\langle f, g\rangle$ to mean $\textstyle \int_{E}f(x)g(x)\d x$, as well as $\textstyle\langle g, \mathcal{Z}\rangle$ to mean $\textstyle\int_E g(x)\mathcal{Z}(\d x)=\sum_{i=1}^N g(x_i)$.

\medskip

We do not need $\bP$ to have the Feller property, and it is not necessary that $\bP$ is conservative. That said, if so desired, we can append  a cemetery state $\{\dagger\}$ to $ E$, which is to be treated as an absorbing state, and regard $\bP$ as conservative on the extended space $E\cup\{\dagger\}$ (which can also be treated as a Lusin space) and $\Phi(\{\dagger\},\cdot)\equiv 0$ (i.e. no branching activity on the cemetery state).  We will write $\mathbb{P}_{\mu}$ for the law of $X$ with initial configuration $\mu \in \mathcal{M}(E)$, and $\mathbb{E}_{\delta_x}$ for the corresponding expectation operator.

\medskip

{\color{black}To emphasise the generality of our model, we provide some examples of non-local branching processes in the literature that are encompassed by our setting. The reader is also referred to \cite{INW1, INW2, INW3} for an introduction to general branching Markov processes. 

\medskip

\noindent{\bf Neutron branching process:} The neutron branching process (NBP), as introduced in~\cite{MultiNTE, SNTE}, is our first example. Neutrons evolve in the configuration space $E = D \times V$, where $D \subset \mathbb{R}^3$ is a bounded, open set denoting the set of particle locations and $V:= \{\up \in \mathbb{R}^3 : \mathtt{v}_{\min} \le |\up| \le \mathtt{v}_{\max}\}$ with $0 < \mathtt{v}_{\min} \le \mathtt{v}_{\max} < \infty$, denotes the set of velocities. 
From an initial space-velocity configuration $(r, \up)$, particles move according to piecewise deterministic Markov processes characterised by $\sigma_\mathtt{s}\pi_\mathtt{s}$, where $\sigma_\mathtt{s}(r, \up)$ denotes the rate at which particles change velocity (also called {\it scattering} events) at $(r, \up)$, and $\pi_\mathtt{s}(r, \up, \up')\d\up'$ denotes the probability that such a scattering event results in a new outgoing velocity $\up'$. When at $(r, \up) \in D \times V$, at rate $\sigma_\mathtt{f}(r, \up)$, a branching (or {\it fission}) event occurs, resulting in the release of several new neutrons with configurations $(r, \up_1), \dots, (r, \up_N)$, say. Thus, the non-locality appears in the velocity variable. The non-linear branching mechanism and associated equation was given in \cite{SNTEII}. 
{The NTE formed our original motivation for these results, since it is the critical regime in which nuclear reactors are operated and thus a more complete picture of the behaviour of this setting can be obtained from Theorems~\ref{lineardecaygen} and~\ref{Ygen} than results of the form (G2). Interestingly, there has been previous work on the Yaglom limit in the setting of NBPs dating back to the 1970s, cf.~\cite{MWY}, however, their methods relied on some rather restrictive assumptions on the NBP. We will discuss this further in Section~\ref{NTE}}.

\medskip

\noindent{\bf Branching L\'evy processes:} Our second example is a one-dimensional branching L\'evy processes as defined in \cite{branchlevy}. This process is defined via a triple, $(\Lambda, L, \Sigma)$, where a particle moves according to a L\'evy process, $L$, and at the random time $\Lambda$, which is exponentially distributed with parameter $\gamma$, the particle dies and instantaneously scatters a random number of independent copies of $X$ according to the point process $\textstyle \Sigma(\d y) = \sum_i \delta_{x_i} (\d y)
$ relative to its point of death. 
\medskip

\noindent{\bf  Uchiyama process:} A relative of the previous model is the one given in \cite{Uchiyama}. Particles live in $\mathbb{R}^d$ and do not move, but after an independent and exponentially distributed random time they give birth at a constant rate $\gamma$ to offspring according to an independent copy of a point process in $\mathbb{R}^d$ centred at the parent's position. In essence this is a continuous-time $d$-dimensional branching random walk.

\medskip

\noindent{\bf Biggins process:} For interest,  we mention the very  general branching  process given in \cite{BigginsBRW}, also called the Biggins process in \cite{AEKThesis}. This process is similarly described by a triple $(Z, M, \chi)$, where $Z$ describes the reproduction, $M$ describes the movement, and $\chi$ is a measure of the importance of an individual. More precisely, a particle moves according to a stochastic process, $(M(a),a \ge 0)$, where $a$ is the age of the particle. At a random time, the parent particle gives birth to a random number of individuals according to the point process $Z$ on $\mathbb{R} \times \mathbb{R}_+$, where the first coordinate describes the spatial displacement of the child from the parent's birth position, and the second coordinate gives the parent's age at the time of that child's birth. Finally, $\chi$, is another random process that can be thought of as the importance of the particle in the population as it ages. This model takes  features of branching Markov processes, continuous-time branching random walks and Crump-Mode-Jagers processes. Although this process is even more general than the ones we consider in this paper, it is worth remarking that it is likely that the techniques employed here are transferable to the setting of Biggins processes, modulo some technical innovation.}
\medskip

Returning to the $(\bP,\Phi)$-branching process, recall that $(N_t, t\geq 0)$ denotes the process giving the number of particles alive at each time $t \ge 0$ and $\{x_i(t) : i = 1, \dots N_t\}$ gives their configurations. Then, the linear semigroup associated with $X$ is given by
 \begin{equation}
   \psi_t[f](x) \coloneqq \E_{\delta_x}\left[ \langle f, X_t\rangle \right], \qquad t \ge 0, \, x \in E, \, f \in L_\infty^+(E).
   \label{linearsgp}
 \end{equation}

We now introduce our first assumption.

\medskip

{\bf \begin{itemize}
\item[(G1):]
  The branching rate, $\gamma(x)$, and the mean number of offspring, 
  \[m(x) = \mathcal{E}_x[\langle 1, \mathcal{Z}\rangle],\] 
  are uniformly bounded above.
\end{itemize}
}

\medskip

We now consider the long-term average behaviour of the process $X$.

\medskip
{\bf \begin{itemize}
\item[(G2):] There exist $\varphi \in L_\infty^+(E)$ and a finite measure $ \tilde\varphi $ on $E$ 
such that $\varphi$ is uniformly bounded away from zero on each compactly embedded subset of $E$ and for all $g\in L^+_{\infty}(E)$,
\begin{equation}
\langle\tilde\varphi, \psi_t[g]\rangle = \langle\tilde\varphi,g\rangle\quad  \text{and }\quad 
\psi_t[\varphi] = \varphi
 \quad \text{ for all } t\ge 0.
\label{earlyleftandright}
\end{equation}
Moreover, there exists $\varepsilon>0$ such that 
\begin{equation}
\sup_{g\in L^+_{\infty}(E)}  \left\|{\varphi}^{-1}{\psi_t[g]}-\langle\tilde\varphi, g\rangle\right\|_\infty = O({\rm e}^{-\varepsilon t}) \quad\text{ for all $t \ge 0$.}
\label{earlyspectralexpsgp}
\end{equation}
\end{itemize}}

\begin{remark}
\rm
It is usual to normalise $\tilde\varphi$ and $\varphi$ such that $\langle\tilde\varphi, \varphi\rangle $ = 1, which we will assume is the case throughout.\hfill$\diamond$
\end{remark}

\begin{remark}[Criticality]\label{criticalityremark}\rm
Note that assumption (G2) entails that the MBP $X$ is what we would call a `critical' setting. {Indeed, for a lot of literature surrounding spatial branching processes, there has been emphasis on results for which an underlying assumption of exponential ergodic growth in the first moment is present; see e.g. \cite{RSZ, EHK, CRY, PY, SNTE, SNTEII}. In other words, this means that the mean semigroup exhibits a Perron-Frobenius type asymptotic of the form
\begin{equation}
\psi_t[f](x)\sim {\rm e}^{\lambda t} \varphi(x)\langle f, \tilde\varphi\rangle,\qquad x\in E,
\label{ergodicspine}
\end{equation}
as $t\to\infty$,
where $\lambda$ is the lead eigenvalue of the mean semigroup $(\psi_t, t\geq0)$ and $\varphi$, $\tilde\varphi$ are the associated right eigenfunction and left eigenmeasure respectively, meaning $\langle\mu,  \psi_t[\varphi] \rangle = {\rm e}^{\lambda t}\langle \mu, \varphi \rangle$ for $t\geq0$, $\mu\in \mathcal{M}(E)$, and $\langle\psi_t[f], \tilde\varphi\rangle = {\rm e}^{\lambda t} \langle \tilde\varphi, f\rangle$, for $f\in L_\infty^+(E)$, $t\geq0$, respectively. 

\medskip

{
The eigenvalue $\lambda$ now gives us a sense of criticality in the spirit of the same notion for Galton-Watson processes. That is to say: The process grows on average if $\lambda>0$ and is referred to as supercritical; decays on average if $\lambda <0$ and referred to as subcritical; maintains a stabilising average if $\lambda = 0$ and is referred to as critical.
}
\medskip

In this sense we see that (G2) deals with critical processes, i.e. $\lambda = 0$.} Although (G2) may seem like a strong assumption, it can be shown to hold for a large class of MBPs. 
We refer the reader to \cite{CV, CVecp, SNTE} and the references therein for various assumptions that imply (G2), as well as several examples where one can prove such results. \hfill$\diamond$
\end{remark}

We will now introduce two key quantities that will appear in our main results. We first define the operator
\begin{equation}
  {\V}[f, g](x) = \mathcal{E}_x\Bigg[\sum_{{\substack{i, j = 1 \\ i \neq j}}}^N f(x_i)g(x_j) \Bigg],
  \qquad f, g\in L^{+}_{\infty}(E), \, x \in E.
\label{2point}
\end{equation}
We will often abuse notation and write ${\V}[g](x)$ instead of ${\V}[g,g](x)$ for $g\in L^{+}_{\infty}(E)$ and $x \in E$. Secondly, we define the constant
\begin{equation}
  \Sigma = \langle \tilde{\varphi}, \gamma {\V}[\varphi]\rangle.
  \label{sigma}
\end{equation}

\medskip

With this notation in mind, we introduce our first main result, which describes the limiting behaviour of all of the moments of the MBP and also drives the first step of the Yaglom limit.

\begin{theorem}[Asymptotic martingale moments]\label{genmoments}
Suppose that (G1) and (G2) hold. Further, suppose that for some (fixed) $k\geq 1$,
\begin{equation}
\sup_{x \in E}\mathcal{E}_x\left[\langle 1, \mathcal{Z}\rangle^{k+1}\right]<\infty.
\label{genmomentcondition}
\end{equation}
Then
for all  $j\leq k$, 
\begin{equation}
\sup_{x\in E}\mathbb{E}_{\delta_x}[\langle \varphi, X_t\rangle^{j + 1}]<\infty, \qquad t\geq 0,
\label{boundedmoments}
\end{equation}
and
\begin{equation}
\label{A}
\lim_{t\to\infty} \sup_{x \in E}\left|
\frac{1}{\varphi(x)t^{j}}\mathbb{E}_{\delta_{x}}\left[\langle\varphi, X_t \rangle^{j+1} \right] -(j+1)! \left(\frac{\Sigma}{2}\right)^{j}\right| =  0.
\end{equation}
\end{theorem}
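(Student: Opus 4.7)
The proof proceeds by induction on $j$. The base case $j=0$ is immediate from the eigenfunction identity $\psi_t[\varphi]=\varphi$ in (G2), which gives $\mathbb{E}_{\delta_x}[\langle \varphi,X_t\rangle]/\varphi(x)=1=1!\,(\Sigma/2)^0$ for all $t\ge 0$ and $x\in E$. For the inductive step, write $M_n(x,t):=\mathbb{E}_{\delta_x}[\langle\varphi,X_t\rangle^n]$ and assume the stated asymptotic for all $n\le j$. Conditioning on the first branching event of the initial particle and restarting independent MBPs from each offspring, a standard Duhamel manipulation (which combines the "no branching" contribution with the linear-in-$M_{j+1}$ part of the ensuing multinomial expansion) yields the representation
\[
M_{j+1}(x,t) = \psi_t[\varphi^{j+1}](x) + \int_0^t \psi_{t-s}\!\left[\gamma\,B_{j+1}(\cdot,s)\right](x)\,ds,
\]
where
\[
B_{j+1}(y,s) = \mathcal{E}_y\!\left[\sum_{\substack{n_1+\cdots+n_N=j+1\\ |\{i\,:\,n_i>0\}|\ge 2}}\binom{j+1}{n_1,\ldots,n_N}\prod_{i=1}^N M_{n_i}(x_i,s)\right].
\]
Crucially every $M_{n_i}$ appearing here has $n_i\le j$, so the inductive hypothesis applies to each factor.

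To extract the leading asymptotic, I split $B_{j+1}=B_{j+1}^{(2)}+B_{j+1}^{(\ge 3)}$ according to whether the number of nonzero entries equals $2$ or exceeds $2$. A combinatorial count (each tuple with exactly two nonzero entries is covered twice when indexing by ordered pairs of positions together with a value $m\in\{1,\dots,j\}$) gives
\[
B_{j+1}^{(2)}(y,s) = \tfrac12\sum_{m=1}^{j}\binom{j+1}{m}\,{\V}\!\left[M_m(\cdot,s),\,M_{j+1-m}(\cdot,s)\right](y).
\]
Substituting the inductive asymptotic $M_n(y,s)\sim\varphi(y)\,n!(\Sigma/2)^{n-1}s^{n-1}$, using the algebraic identity $\binom{j+1}{m}m!(j+1-m)!=(j+1)!$, and invoking \eqref{earlyspectralexpsgp} to replace the outer semigroup $\psi_{t-s}[\,\cdot\,](x)$ by $\varphi(x)\langle\tilde\varphi,\cdot\rangle$ up to an error of order $\mathrm{e}^{-\varepsilon(t-s)}$, the $s$-integral becomes, to leading order,
\[
\int_0^t \varphi(x)\cdot\tfrac{j}{2}(j+1)!\,(\Sigma/2)^{j-1}\,\Sigma\, s^{j-1}\,ds = \varphi(x)\,(j+1)!\,(\Sigma/2)^j\,t^j,
\]
where I have used $\Sigma=\langle\tilde\varphi,\gamma\,{\V}[\varphi]\rangle$ from \eqref{sigma}. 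This matches the claimed constant exactly.

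It remains to show the remaining contributions are $o(t^j)$ uniformly in $x$. The term $\psi_t[\varphi^{j+1}](x)$ converges to $\varphi(x)\langle\tilde\varphi,\varphi^{j+1}\rangle$ exponentially by (G2), hence is $O(1)$. The tuples contributing to $B_{j+1}^{(\ge 3)}$ involve products of at least three factors $M_{n_i}$ with $\sum n_i=j+1$ and each $n_i\le j-1$; by the inductive hypothesis these grow at most as $s^{(j+1)-3}=s^{j-2}$, and the resulting higher-point analogues of ${\V}$ are finite by \eqref{genmomentcondition}, so after integration in $s$ their contribution is at most $O(t^{j-1})$. The moment bound \eqref{boundedmoments} is extracted along the way by a Grönwall-type argument applied to the same recursive identity, exploiting (G1) and \eqref{genmomentcondition}. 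The principal obstacle is propagating through the induction a \emph{uniform-in-$x$} version of the asymptotic, which requires that the semigroup remainder in \eqref{earlyspectralexpsgp} be summable when integrated against the polynomial-in-$s$ asymptotics inserted from the previous induction step; this is feasible because \eqref{earlyspectralexpsgp} is itself uniform in $x$. The essential novelty forced by the non-local branching is the need to handle the bilinear operator ${\V}[\cdot,\cdot]$ (and its higher-point analogues) in place of a scalar pointwise variance, which complicates but does not obstruct the combinatorial identification of the leading term.
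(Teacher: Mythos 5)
Your proposal is correct in substance but follows a genuinely different route from the paper. The paper works under the martingale change of measure $\mathbb{P}^\varphi$, applies the spine decomposition to write $\langle\varphi,X_t\rangle$ as a sum of subtrees immigrating at Poisson times along the spine, expands the $k$-th power via the i.i.d.\ multinomial formula (Lemma \ref{iidmulti}), controls the resulting time--space averages with an ergodic theorem for the spine (Theorem \ref{Ylem}), and finally resums the contributions of $j=1,\dots,k$ subtrees through a Bell-polynomial/Stirling-number identity (Lemma \ref{combinatoriclemma}) to recover $(k+1)!$. You instead derive a recursive Duhamel equation for $M_{j+1}$ directly, in which the linear part of the first-branch multinomial expansion is absorbed into $\psi_t$ and only interactions between at least two offspring remain; the leading order is then isolated as the pairwise term $\tfrac12\sum_m\binom{j+1}{m}\V[M_m,M_{j+1-m}]$, and the constant closes through the elementary identity $\binom{j+1}{m}m!(j+1-m)!=(j+1)!$ together with $\int_0^t s^{j-1}\,\mathrm{d}s=t^j/j$ --- no change of measure, no spine, and no Stirling resummation. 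Your computation of the constant is exactly right, and your error analysis (the $B^{(\ge 3)}$ terms contribute $O(t^{j-1})$ since at least three nonzero exponents force the product to grow no faster than $s^{j-2}$; the semigroup replacement error $\int_0^t \mathrm{e}^{-\varepsilon(t-s)}s^{j-1}\mathrm{d}s=O(t^{j-1})$ is negligible) is sound. What your route buys is a considerably more elementary and self-contained argument, of the same flavour as the paper's own derivations of \eqref{lineareq} and \eqref{M22formula}, and one that generalises naturally to higher moments in non-critical regimes; what it gives up is the probabilistic picture and the ergodic Theorem \ref{Ylem}, which the paper reuses in Step 3. Two points deserve more care than your sketch gives them: the derivation of the moment evolution equation itself presupposes finiteness and measurability of $M_{j+1}$ (so \eqref{boundedmoments} should be established first, e.g.\ by truncation or by noting that your recursion expresses $M_{j+1}$ entirely in terms of lower moments once the linear part is absorbed), and the insertion of the inductive asymptotic into the $s$-integral requires splitting off a bounded initial time interval, on which only boundedness of the lower moments, not their asymptotics, is available --- both are routine but should be stated.
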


\medskip

In order to state our other main results, we set $\zeta := \inf\{t \ge 0 : \langle 1, X_t \rangle = 0\}$ to be the extinction time of the MBP and further impose the following assumptions.
\medskip

{\bf 
\begin{itemize}
\itemsep1em

\item[(G3):] For all $x \in E$, the process becomes extinct $\P_{\delta_x}$-almost surely, i.e. 
\[
\mathbb{P}_{\delta_x}(\zeta < \infty) = 1.
\]
\item[(G4):] The number of offspring produced at a branching event is bounded above by a constant, $N_{\max}$.

\item[(G5):] There exists a constant $C > 0$ such that for all $g \in L^+_\infty(E)$, 
    \[
      \langle \tilde\varphi, \gamma{{\V}}[g]\rangle \ge C\langle \tilde\varphi, {g}\rangle^2.
    \]
\end{itemize}
}

Let us say a few words about these extra assumptions. While assumption (G2) entails that $\psi_t[g](x) \sim \langle \tilde\varphi, g \rangle \varphi(x)$ as $t \to \infty$, it is folklore that for critical branching processes, the process becomes extinct almost surely, as in (G3). {However, because of general spatial considerations, proving this in the general case is rather challenging. Indeed, it is possible to adapt the arguments of the proof of this result given in~\cite{SNTE} to yield almost sure extinction for {\it almost every} $x \in E$, however one can always construct examples where the process is supercritical at a single point, for example.} Thus we include it as an assumption. We refer the {reader to \cite{SNTE, Fish1, Fish2, Fish3, grey, sevast, Hbook} for sufficient conditions and proofs of this result in various settings (which e.g. include NBPs, branching diffusions and multi-type continuous-time Galton-Watson processes). }

\medskip

{Assumption (G4) is necessary for the proof of the asymptotic of the survival probability in Theorem~\ref{lineardecaygen} below. However, we emphasise that, if we assume that the conclusion of Theorem~\ref{lineardecaygen} holds, then (G4) would not be needed for the proof of Theorem \ref{Ygen}; in this case we would require that all moments of the offspring distribution exist.}

\medskip

{Assumption (G5) can be thought of as an irreducibility type condition on the branching operator, arguably playing an analogous  role to e.g. the one uniform ellipticity plays for  diffusion operators. Heuristically speaking, it ensures that the MBP is able to  explore the state space. We will later apply this condition when $g$ is the survival probability. In this case, the right-hand side can be seen as the survival probability of two independent particles started in the stationary distribution $\tilde\varphi$. Hence, (G5) implies that if there is a branching event (in the stationary distribution), the process should have a better chance of survival than producing two particles {\it independently} according to $\tilde\varphi$.}

\medskip

%

This leads us to our main results regarding the asymptotic behaviour of the survival probability and the process conditioned to survive.

\begin{theorem}[Kolmogorov limit]\label{lineardecaygen}
Suppose that assumptions (G1), (G2), (G3), (G4) and (G5) hold. Then, for all $x \in E$,
\[
\lim_{t\to\infty} t\mathbb{P}_{\delta_{x}}(\zeta>t) = \frac{2\varphi(x)}{\Sigma}.
\]
\end{theorem}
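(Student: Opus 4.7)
The plan is to combine an analytic upper bound on the survival probability (via (G4)--(G5)) with a probabilistic second-moment lower bound, and then leverage the full moment asymptotics of Theorem~\ref{genmoments} through a moment-matching argument to pin down the exact constant.

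Write $v_t(x) := \mathbb{P}_{\delta_x}(\zeta > t)$ and $P(t) := \langle \tilde\varphi, v_t\rangle$. The branching property together with the pointwise identity
\[
1-\prod_{i=1}^N(1-v(x_i)) = \sum_i v(x_i) - \tfrac{1}{2}\sum_{i\neq j}v(x_i)v(x_j) + \widetilde{H}[v],
\]
in which the remainder satisfies $\|\widetilde{H}[v]\|_\infty \le C\|v\|_\infty^3$ thanks to (G4), yields the non-linear evolution equation $\partial_t v_t = \mathfrak{L}v_t - \tfrac{1}{2}\gamma\,\V[v_t] + \widetilde{H}[v_t]$, where $\mathfrak{L}$ is the generator of the mean semigroup $\psi_t$. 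Pairing with $\tilde\varphi$ and using the left-eigenmeasure property $\langle\tilde\varphi,\mathfrak{L}f\rangle = 0$ from (G2) gives $P'(t) = -\tfrac{1}{2}\langle\tilde\varphi,\gamma\V[v_t]\rangle + O(\|v_t\|_\infty P(t)^2)$, and (G5) converts the $\V$-term into the quadratic lower bound $\langle\tilde\varphi,\gamma\V[v_t]\rangle \ge CP(t)^2$. The resulting Riccati-type differential inequality integrates to $\limsup_{t\to\infty} tP(t) < \infty$, once one observes that $\|v_t\|_\infty \to 0$ (which follows from (G3) together with a short bootstrap via $v_t(x) \le \psi_1[v_{t-1}](x)$ and the spectral gap in (G2)).

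The matching lower bound comes from Cauchy--Schwarz applied to the nonnegative martingale $W_t = \langle\varphi, X_t\rangle$, which satisfies $\mathbb{E}_{\delta_x}[W_t] = \varphi(x)$ because $\psi_t\varphi = \varphi$:
\[
\varphi(x)^2 = \mathbb{E}_{\delta_x}[W_t\mathbf{1}_{\{\zeta > t\}}]^2 \le \mathbb{E}_{\delta_x}[W_t^2]\,v_t(x).
\]
Theorem~\ref{genmoments} with $j=1$ gives $\mathbb{E}_{\delta_x}[W_t^2] = \varphi(x)\Sigma t\,(1+o(1))$ uniformly in $x$, whence $v_t(x) \ge \varphi(x)/(\Sigma t)\,(1+o(1))$ uniformly and $\liminf_{t\to\infty} tP(t) \ge 1/\Sigma$. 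At this point both $tP(t)$ and $tv_t(x)/\varphi(x)$ are sandwiched between positive constants, uniformly in $x$.

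To identify the exact constant I would extract any subsequence $t_n \to \infty$ along which $t_n v_{t_n}(x) \to L(x) \in (0,\infty)$ and apply Theorem~\ref{genmoments} to obtain, for every $j \ge 0$,
\[
\mathbb{E}_{\delta_x}\!\left[\Big(\tfrac{W_{t_n}}{t_n}\Big)^{j+1}\,\Big|\,\zeta > t_n\right] = \frac{\mathbb{E}_{\delta_x}[W_{t_n}^{j+1}]}{t_n^{j+1}\,v_{t_n}(x)} \longrightarrow \frac{\varphi(x)\,(j+1)!\,(\Sigma/2)^j}{L(x)}.
\]
These moments grow only factorially, so the Carleman condition determines the limiting law uniquely; matching the $j=0$ and $j=1$ moments against those of an $\mathrm{Exp}(2/\Sigma)$ random variable forces $L(x) = 2\varphi(x)/\Sigma$. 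Since every subsequential limit coincides, $tv_t(x) \to 2\varphi(x)/\Sigma$ pointwise. A convolution estimate based on the mild form of the non-linear equation together with the spectral gap in (G2) (which makes $v_t(x)/\varphi(x) - P(t)$ decay like $O(1/t^2)$) finally upgrades this convergence to hold uniformly in $x \in E$. The main technical obstacle is Step~1: turning the algebraic expansion into a clean Riccati-type inequality for $P(t)$ requires verifying that the remainder $\widetilde{H}[v_t]$ is genuinely negligible compared with the $\V$-term, which is precisely where assumption (G4) enters to cap the combinatorial blow-up of higher powers. The subsequent moment-matching step is then essentially free, and it is satisfying to see the famous factor of two over the naive Cauchy--Schwarz bound emerge as a direct consequence of the limiting conditional law being exponential.
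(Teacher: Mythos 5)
Your coarse bounds are sound and essentially the paper's: the Riccati-type inequality $a'(t)\le -C a(t)^2$ obtained from (G5) (after establishing $\norm{u_t}_\infty\to 0$ exactly as you sketch) gives the $O(1/t)$ upper bound, and your Cauchy--Schwarz step is the same inequality as the paper's Jensen argument under the $\varphi$-change of measure, since $\mathbb{E}_{\delta_x}[W_t^2]=\varphi(x)\,\mathbb{E}^\varphi_{\delta_x}[W_t]$. The problem is the final step, where you identify the constant by moment matching along subsequences.

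The gap is that the limiting moment sequence does \emph{not} force $L(x)=2\varphi(x)/\Sigma$. Along your subsequence the conditional law of $W_{t_n}/t_n$ has limiting moments $m_{j+1}=c\,(j+1)!\,(\Sigma/2)^{j}$ with $c=\varphi(x)/L(x)$ and $m_0=1$. For \emph{every} $p\in(0,1]$ the mixture $p\cdot\mathrm{Exp}(2/\Sigma)+(1-p)\,\delta_0$ has exactly this moment sequence with $c=p\,\Sigma/2$; Carleman determinacy then tells you only that the subsequential limit is such a mixture, i.e.\ that $c\le \Sigma/2$, equivalently $L(x)\ge 2\varphi(x)/\Sigma$. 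Matching against a pure exponential presupposes that the conditional limit has no atom at zero, which is precisely the content of the Yaglom limit you have not yet proved (and which the paper proves \emph{after}, and using, the Kolmogorov limit). What is missing is the matching upper bound $\limsup_t t\,u_t(x)\le 2\varphi(x)/\Sigma$; your analytic step only yields $t\,u_t(x)\le C$ for an unspecified $C$ because (G5) is a one-sided inequality with an unknown constant.

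The paper closes this by sharpening the Riccati \emph{inequality} to an asymptotic \emph{equality}. The ingredients are: (a) the expansion $\G[h]=-\tfrac12\gamma\V[h]+O(\norm{h}_\infty^3)$ (Lemma \ref{lem: prop-G}(ii), using (G4)); (b) the comparison lemma (Lemma \ref{lem: iter}) showing $\sup_x|u_t(x)/\varphi(x)-a(t)|=O(t^{-2})$, which combined with Lipschitz continuity of $\V$ gives $\langle\tilde\varphi,\gamma\V[u_s]\rangle=a(s)^2\Sigma+O(s^{-3})$. Feeding this into $a'(t)=\langle\tilde\varphi,\G[u_t]\rangle$ yields $a'(t)\sim-\tfrac{\Sigma}{2}a(t)^2$ and hence $a(t)\sim 2/(\Sigma t)$ with the exact constant; the pointwise statement for $u_t(x)$ then follows from (b) again. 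You do mention the $O(1/t^2)$ comparison estimate, but you deploy it only at the very end to upgrade pointwise to uniform convergence, whereas it is needed \emph{before} the constant can be identified. If you reroute your argument to use the comparison lemma inside the ODE for $a(t)$, the moment machinery becomes unnecessary for this theorem.
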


\begin{theorem}[Yaglom limit]\label{Ygen} 
Suppose that assumptions (G1), (G2), (G3), (G4) and (G5) hold. For $f\in L^+_\infty(E)$ and for all $x \in E$, 
\begin{equation}
\lim_{t\to\infty}\mathbb{E}_{\delta_{x}}\left[\left. \exp\left( -\theta \frac{ \langle f, X_t \rangle}{t}\right)\right|\zeta>t\right] = \frac{1}{1 + \langle \tilde\varphi, f\rangle\Sigma  \theta/2}.
\label{YaglomGen}
\end{equation}
\end{theorem}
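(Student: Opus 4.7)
The plan is to establish \eqref{YaglomGen} first in the special case $f=\varphi$ by the method of moments (directly combining Theorems~\ref{genmoments} and~\ref{lineardecaygen}), and then to bootstrap to arbitrary $f\in L_\infty^+(E)$ through a Slutsky-type argument based on a second-moment estimate for the residual $g := f - \langle\tilde\varphi, f\rangle\varphi$, which is centred in the sense that $\langle\tilde\varphi,g\rangle = 0$.

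For the case $f = \varphi$, assumption (G4) ensures that $\mathcal{E}_x[\langle 1,\mathcal{Z}\rangle^{k+1}]<\infty$ for every $k\ge 1$, so Theorem~\ref{genmoments} applies at every order. Writing the conditional moment as a ratio and then invoking Theorems~\ref{genmoments} and~\ref{lineardecaygen},
\[
\mathbb{E}_{\delta_x}\big[(\langle\varphi,X_t\rangle/t)^{k}\,\big|\,\zeta>t\big] = \frac{t^{-k}\,\mathbb{E}_{\delta_x}[\langle\varphi,X_t\rangle^k]}{\mathbb{P}_{\delta_x}(\zeta>t)} \longrightarrow \frac{k!(\Sigma/2)^{k-1}\varphi(x)}{2\varphi(x)/\Sigma} = k!(\Sigma/2)^{k},
\]
which are precisely the moments of $\mathbf{e}_{2/\Sigma}$. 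Since the exponential law is uniquely determined by its moments (Carleman's condition holds), the method of moments yields conditional convergence in distribution of $\langle\varphi,X_t\rangle/t$ to $\mathbf{e}_{2/\Sigma}$, which is exactly \eqref{YaglomGen} when $f=\varphi$.

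For general $f$, decompose $f = \langle\tilde\varphi,f\rangle\varphi + g$. Writing $g = g^+-g^-$ with $g^\pm\in L_\infty^+(E)$ and applying the ergodic bound of (G2) separately to $g^\pm$, the leading ergodic terms cancel because $\langle\tilde\varphi,g\rangle=0$, leaving
\[
|\psi_{t-s}[g](x)| \le C\,\varphi(x)\,\mathrm{e}^{-\varepsilon(t-s)}, \qquad 0\le s\le t, \; x\in E,
\]
for a constant $C$ depending only on $g$. The standard branching second-moment identity gives
\[
\mathbb{E}_{\delta_x}[\langle g,X_t\rangle^2] = \psi_t[g^2](x) + \int_0^t \psi_s\big[\gamma\,\V[\psi_{t-s}[g]]\big](x)\,\d s,
\]
and combining $|\V[\psi_{t-s}[g]]|\le C^2\mathrm{e}^{-2\varepsilon(t-s)}\V[\varphi]$ with the uniform-in-$t$ estimate $\psi_s[\gamma\V[\varphi]](x)\le C'\varphi(x)$ (which follows from (G1), (G2) and (G4)) yields $\mathbb{E}_{\delta_x}[\langle g,X_t\rangle^2] = O(\varphi(x))$. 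Dividing by $t^2\mathbb{P}_{\delta_x}(\zeta>t)\asymp t\varphi(x)/\Sigma$,
\[
\mathbb{E}_{\delta_x}\big[(\langle g,X_t\rangle/t)^2\,\big|\,\zeta>t\big] = O(1/t) \longrightarrow 0,
\]
so $\langle g,X_t\rangle/t \to 0$ in $L^2$, and hence in probability, under $\mathbb{P}_{\delta_x}(\,\cdot\mid\zeta>t)$.

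To conclude, write $\langle f,X_t\rangle/t = \langle\tilde\varphi,f\rangle\langle\varphi,X_t\rangle/t + \langle g,X_t\rangle/t$ and apply Slutsky's theorem under the conditional law: the first term converges in distribution to $\langle\tilde\varphi,f\rangle\mathbf{e}_{2/\Sigma}\eqd \mathbf{e}_{2/(\langle\tilde\varphi,f\rangle\Sigma)}$, and the second to $0$ in probability. Bounded convergence applied to the continuous bounded map $x\mapsto\mathrm{e}^{-\theta x}$ then gives~\eqref{YaglomGen}. The main obstacle is the second-moment estimate for the sign-changing $g$: one must handle $\V$ as a bilinear form on signed test functions, and it is precisely the cancellation $\langle\tilde\varphi,g\rangle=0$ that produces the exponential decay of $\psi_{t-s}[g]$, which in turn keeps the integral term bounded uniformly in $t$ despite the survival probability decaying only polynomially.
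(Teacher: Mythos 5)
Your proposal is correct and follows the same overall architecture as the paper: the moment method for $f=\varphi$ (combining Theorems~\ref{genmoments} and~\ref{lineardecaygen} exactly as in \eqref{prelimit}), the decomposition $f=\langle\tilde\varphi,f\rangle\varphi+\tilde f$ with $\langle\tilde\varphi,\tilde f\rangle=0$, a Chebyshev/second-moment bound on the residual via the many-to-two formula, and Slutsky. The one place you diverge is in controlling the integral term of the many-to-two formula: the paper feeds $F(x,u,t)=\varphi(x)^{-1}\gamma(x)\V[\psi_{t(1-u)}[\tilde f]](x)$ into the spine ergodic theorem (Theorem~\ref{Ylem}) with limit $F(\cdot,u)\equiv 0$, which only yields $\mathbb{E}_{\delta_x}[\langle\tilde f,X_t\rangle^2]/t\to 0$; you instead exploit the exponential decay $|\psi_{t-s}[\tilde f]|\le C\varphi\,{\rm e}^{-\varepsilon(t-s)}$ coming from $\langle\tilde\varphi,\tilde f\rangle=0$, together with $\psi_s[\gamma\V[\varphi]]\le C'\varphi$, to get the stronger uniform-in-$t$ bound $\mathbb{E}_{\delta_x}[\langle\tilde f,X_t\rangle^2]=O(\varphi(x))$. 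This is more elementary (no appeal to Theorem~\ref{Ylem} at this stage) and sharper, giving an $O(1/t)$ rate for the conditional second moment of the residual rather than mere convergence to zero; either route closes the argument.
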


{As alluded to in \eqref{result2}, another way of reading Theorem \ref{Ygen} is to define $\mathbb{P}^t_{\delta_{x}}(\cdot) = \mathbb{P}_{\delta_{(r,\up)}}(\cdot\, | \,\zeta>t)$, in which case $(t^{-1}\langle f, X_t\rangle , \mathbb{P}^t_{\delta_{(r,\up)}})$ as $t\to\infty$ converges weakly to an exponential random variable with rate $2/\langle \tilde\varphi, f\rangle\Sigma$.

\bigskip

{\color{black}There does not appear to be a comparable result to Theorem \ref{genmoments} for general branching processes in the literature, least of all for processes with non-local branching mechanisms. However, we will make a few remarks below concerning the only branching Markov process models for which we are aware that Theorem \ref{lineardecaygen} and \ref{Ygen} have been proved. 

\medskip

\noindent{\bf Isotropic neutron transport:} First, let us consider \cite{MWY}. Here the authors proved Theorem \ref{lineardecaygen} under the assumption of isotropic fission and scattering, and where the fission rate and yield are independent of the position and velocity of the colliding neutron. In particular, recalling the NBP example discussed previously, we have $\sigma_\mathtt{f}(r, \up) = \sigma$, for some constant $\sigma$ > 0, and further, the law of the offspring $\mathcal{P}$ does not depend on $(r, \up)$. 

\medskip

\noindent {\bf Branching Brownian motion:} Secondly, we consider the model given in \cite{Ellen}. Here, the author considers a branching Brownian motion on a compact domain $D$. More precisely, particles move according to a Brownian motion in $D$ and at rate $\gamma$, a branching event occurs, producing some number of offspring at the site of the branching event. If a particle hits the boundary of $D$, it is killed. Again, in this case, the branching rate $\gamma$ and the offspring distribution are spatially independent. {Furthermore}, in this setting, the left and right eigenfunctions are equal. Hence, the constant $\Sigma$ reads
\[
  \Sigma = \gamma\mathcal{E}[N(N-1)]\int_D \varphi(x)^3\d x,
\]
as given in \cite[Theorem 1.4]{Ellen}.
\medskip

In the spirit of the branching Brownian motion model described above, Branching L\'evy processes, Uchiyama processes and Biggins processes should, in principal, be candidates from which one can similarly develop a Yaglom limit result by killing particles when they exit a compact domain in such a way that the system is critical. {Generally speaking, killing in such a manner, is a way to open the door to  satisfying the required condition (G2).}
}

\medskip

\noindent{\bf Multi-type continuous-time Galton Watson process:} Technically speaking, a multi-type continuous-time Galton Watson process is a spatial branching particle system in which $E$ is countable. Parents do not move during their lifetime, which is independent and exponentially distributed depending on the particle's state say $q_i$, $i\in E$,  and reproduce on their death, sending their offspring both to their current site as well as other sites in $E$ in such a way that the distribution of the offspring depends on the state of the parent. 
In the setting that $E$ is finite {and the mean  offspring matrix satisfies an irreducibility} condition, the Yaglom limit theorem is  a classical result, see e.g. Chapter V.5 of \cite{AthreyaNey}. In our setting, the conditions of Theorem \ref{Ygen} are automatically satisfied, e.g. (G2) is essentially supported by the Perron-Frobenius Theorem for the mean semigroup matrix. 

\medskip

For the case that $E$ is countable but infinite, as in e.g.~\cite{MGWscaling, Moy},  let $M(t)$ denote the matrix whose $(i,j)$-th entry represents the mean number of type $j$ individuals alive at time $t \ge 0$ when the process is initiated from a single type $i$ individual and, for $\lambda \in \mathbb R$, set $\textstyle H(\lambda) = \int_0^\infty{\rm e}^{\lambda t}M(t)\d t$. It was shown in \cite{sandra} that if $M(t)$ is irreducible and $\lambda$ is such that $H_{i,j}(\lambda)<\infty$ for a given pair $i,j\in E$, then $H_{i,j}(\lambda)<\infty$ for all $i,j\in E$. We can then define the spectral radius $\lambda_* = \sup\{\lambda\geq  -\infty: H(\lambda) <\infty \}$. Thus, the first part of assumption (G2) holds, i.e. \eqref{earlyleftandright}, however, it is still unclear  as to whether the uniform convergence in \eqref{earlyspectralexpsgp} holds in this case. Theorems \ref{lineardecaygen} and \ref{Ygen}  also require that the spectral radius satisfies $\lambda_*=0$, but nonetheless give  new Yaglom limit results, which we do not believe is currently in the literature. 

\bigskip

We should also note that there are recent analogues of Theorem \ref{lineardecaygen} and \ref{Ygen} for superprocesses, cf. \cite{Yanxia}, however this case does not fall under our setting of branching particle systems. Moreover, the known results for superprocess are focused on local branching mechanisms. In forthcoming work we will address some of the results presented here for  superprocesses with non-local branching mechanisms. 

\medskip

The rest of the paper is set out as follows. In the next section, we will consider the evolution equations associated with the linear semigroup $(\psi_t$, $t \ge 0)$, including the many-to-one and many-to-two representations. In Section \ref{spinemoments} we outline the main steps for the proofs of Theorems \ref{genmoments}, \ref{lineardecaygen} and \ref{Ygen}, which are based around the use of the spine decomposition, together with some combinatorial arguments. Sections \ref{sec:spine} and \ref{sec:ergodic} are dedicated to the so-called spine decomposition. In particular, we provide a description this decomposition and prove a result pertaining to its ergodic behaviour, both of which will be crucial to the proof of Theorem~\ref{genmoments}. Sections \ref{sec:step1}, \ref{Sectstep2} and \ref{Sectstep3} contain the proofs of Theorems \ref{genmoments}, \ref{lineardecaygen} and \ref{Ygen}, respectively. {As alluded to earlier, it is the setting of the NTE which was the main motivation for these results in light of recent developments in this setting, cf. \cite{SNTE, SNTEII, SNTEIII, MCNTE}. Moreover, the NTE is a particular case in point where the underlying Markov process does not permit analytical formulations of the Kolmogorov and Yaglom limits which can be described in terms of PDEs. For this reason, in Section \ref{NTE}, we demonstrate   the effectiveness of our results in the  setting of the neutron branching process.}

\section{Evolution equations}
In this section, we consider some of the evolution equations associated with the linear semigroup $(\psi_t, t \ge 0)$ that will be useful in the proofs of our main results. To this end, for $f \in L_\infty^+(E)$ and $x \in E$, define
\begin{equation}
  m[f](x) := \mathcal{E}_x[\langle f, \mathcal{Z}\rangle] =  \mathcal{E}_x\left[\sum_{i = 1}^N f(x_i)\right].
\end{equation}
We will also use the notation $m(x) := m[{\bf 1}](x)$ where ${\bf 1}$ is the constant function with value $1$. We further define the
(linear) branching operator as follows:
\begin{equation}
  \mathscr{A}[f](x) \coloneqq  \gamma(x)(m[f](x) - f(x)), \qquad x\in E, f\in L_\infty^{+}(E).
  \label{linearG}
\end{equation}

Then the evolution of $(\psi_t, t \ge 0)$ can be described in the following proposition.

\begin{prop}\label{prop:linear}
  Under (G1), the semigroup $(\psi_t, t \ge 0)$ is 
  the unique solution to the  evolution equation
  \begin{equation}
    \psi_t[f](x) = \bP_t[f](x) + \int_0^t\bP_s[\mathscr{A}[\psi_{t-s}[f]]](x) \d s, \qquad t\geq0, f\in L^+_\infty(E).
    \label{lineareq}
  \end{equation}
\end{prop}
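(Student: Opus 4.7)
The plan has three components: (i) derive a Feynman--Kac representation for $\psi_t$ by a first-branching-time decomposition, (ii) transform this into the mild equation \eqref{lineareq} via a Duhamel-type identity, and (iii) prove uniqueness by a Gronwall argument. Boundedness of $\psi_t[f]$ uniformly in $x$ on finite time horizons will be needed throughout and follows from (G1).

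First I would perform the first-branching decomposition. Let $\tau$ denote the first fission time of the initial particle, started from $x\in E$, moving according to a copy $(\xi_s, s\geq 0)$ of the $\bP$-process. Conditional on $\xi$, the rate of branching while at $\xi_s$ is $\gamma(\xi_s)$, so $\bP_x(\tau>t\mid \xi)=\exp(-\int_0^t \gamma(\xi_u)\,\d u)$. On $\{\tau>t\}$ we have $\langle f,X_t\rangle=f(\xi_t)$; on $\{\tau=s\leq t\}$, by the branching Markov property, the offspring at $\xi_s$ initiate independent copies of the MBP and hence contribute $\sum_{i=1}^N \psi_{t-s}[f](x_i)$ in expectation. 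Taking expectations with respect to $\mathcal{P}_{\xi_s}$ and integrating over $\tau$ yields the Feynman--Kac representation
\[
\psi_t[f](x) = U_t[f](x) + \int_0^t U_s\bigl[\gamma\, m[\psi_{t-s}[f]]\bigr](x)\,\d s,
\]
where $U_t[g](x):=\bP_x\!\left[g(\xi_t)\mathrm{e}^{-\int_0^t \gamma(\xi_u)\d u}\right]$. A priori bounds $\sup_{x\in E,\, s\in[0,t]}\psi_s[f](x)<\infty$ follow by iterating this identity and invoking (G1).

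Second, I would convert this into \eqref{lineareq}. Writing $\mathrm{e}^{-\int_0^t\gamma(\xi_u)\d u}=1-\int_0^t \gamma(\xi_s)\mathrm{e}^{-\int_0^s\gamma(\xi_u)\d u}\d s$, using Fubini, and applying the Markov property of $\xi$ at time $s$, one obtains the Duhamel-type relation
\[
\bP_t[g](x) = U_t[g](x) + \int_0^t \bP_s\bigl[\gamma\, U_{t-s}[g]\bigr](x)\,\d s, \qquad g\in L^+_\infty(E).
\]
Substituting this (with $g=f$ and, inside the integral, with $g=m[\psi_{t-s}[f]]$) into the Feynman--Kac representation and carefully reorganising the resulting double integral using Fubini produces
\[
\psi_t[f](x) = \bP_t[f](x) + \int_0^t \bP_s\bigl[\gamma(m[\psi_{t-s}[f]]-\psi_{t-s}[f])\bigr](x)\,\d s,
\]
which is precisely \eqref{lineareq}. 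The key bookkeeping step here is recognising that the cancellation between the $U_t[f]$ correction term and the iterated contribution from $U_s[\gamma m[\psi_{t-s}[f]]]$ produces the $-\psi_{t-s}[f]$ in the operator $\mathscr{A}$; I expect this to be the main technical obstacle and would carry it out by substituting back the Feynman--Kac form for $\psi_{t-s}[f]$ inside the correction term and collecting.

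Finally, for uniqueness, suppose $u^{(1)}$ and $u^{(2)}$ are two solutions of \eqref{lineareq} in the class of functions bounded on $E\times[0,T]$ for each $T>0$, and set $w_t:=u^{(1)}_t-u^{(2)}_t$. Subtracting the two equations gives $w_t(x)=\int_0^t \bP_s[\mathscr{A}[w_{t-s}]](x)\,\d s$. By (G1), $\|\mathscr{A}[g]\|_\infty\leq \|\gamma\|_\infty(\|m\|_\infty+1)\|g\|_\infty$, and since $\bP_s$ is a sub-Markov contraction on $L^+_\infty(E)$, we obtain $\|w_t\|_\infty \leq C\int_0^t \|w_{t-s}\|_\infty\,\d s$ for a finite constant $C$. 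Gronwall's inequality then forces $w\equiv 0$ on $[0,T]$ for every $T>0$, establishing uniqueness and completing the proof.
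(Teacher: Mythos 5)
Your proposal is correct and follows essentially the same route the paper indicates: a first-branching-event decomposition yielding the Feynman--Kac form with the $\gamma$-killed semigroup, followed by the transfer to the mild equation \eqref{lineareq} — your Duhamel identity is precisely the content of the Dynkin lemma (Lemma 1.2, Chapter 4 of \cite{Dynkin2}) that the paper cites for this step — and a standard Gronwall argument for uniqueness. (Only cosmetic remark: the exponential splitting you quote, $\mathrm{e}^{-\int_0^t\gamma}=1-\int_0^t\gamma(\xi_s)\mathrm{e}^{-\int_0^s\gamma}\,\d s$, pairs with the Duhamel form $\bP_t[g]=U_t[g]+\int_0^t U_s[\gamma \bP_{t-s}[g]]\,\d s$; the form you display instead requires splitting with $\mathrm{e}^{-\int_s^t\gamma}$ — both identities are true, so nothing breaks.)
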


\bigskip

The proof of the above proposition follows by breaking the expectation in \eqref{linearsgp} on the first branching event and then appealing to Lemma 1.2, Chapter 4 in \cite{Dynkin2} to tidy up the resulting integral equation. The arguments are fairly standard, see for example \cite{MultiNTE}, and so we leave the details as an exercise to the reader. 

\medskip

We now consider an alternative representation of the semigroup $(\psi_t, t\geq 0)$ via a so-called many-to-one formula, which will be convenient for later.
Consider the process $Y = (Y_t,t \ge 0)$ which evolves according to the dynamics of $(\bP_t, t\geq0)$ albeit that, with instantaneous rate $\gamma(x) m(x)$, it jumps from its current position, $x\in E$, into a Borel set $A\subseteq E$ with probability  
$
m(x)^{-1}\mathcal{E}_x\left[ \mathcal{Z}(A)\right].
$ Letting ${\mathbf P} = ({\mathbf P}_x, x\in E)$ denote the law of this process, we can again follow standard reasoning highlighted above  and  condition on the first  first jump of $Y$  to deduce that
\begin{equation}
   \eta_t[f](x) \coloneqq {\mathbf{E}}_x[f(Y_t)] = \bP_t[f](x) + \int_0^t\bP_s[\gamma m \hat{\mathscr{A}}[\eta_{t-s}[f]](x)\d s,
   \label{etaevolution}
 \end{equation}
 where 
 \[
   \hat{\mathscr{A}}[f](x) = \frac{1}{m(x)}\mathcal{E}_x\left[\sum_{i = 1}^N f(x_i)\right]  - f(x).
 \]
Again  in the spirit of e.g. \cite{MultiNTE}, it is not difficult to show that one can relate the linear semigroup $(\psi_t, t\geq0)$ to the process $Y$
 via the following many-to-one formula. 
 \begin{lemma}[Many-to-one formula]\label{lem:M21}
 Suppose (G1) is satisfied. Then 
 \begin{equation}
   \psi_t[f](x) = {\mathbf{E}}_x\left[{\rm e}^{\int_0^t B(Y_s)\d s}f(Y_t)\right],\qquad t \ge 0, f \in L_\infty^{+}(E),
   \label{M21}
 \end{equation}
 where $B(x) = \gamma(x)(m(x) - 1)$.
\end{lemma}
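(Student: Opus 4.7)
Set $u_t[f](x) := \mathbf{E}_x\bigl[\mathrm{e}^{\int_0^t B(Y_s)\,ds}f(Y_t)\bigr]$, which is finite for each $t \ge 0$, $x \in E$ and bounded in $x$ by $\|f\|_\infty \mathrm{e}^{Ct}$ for some $C < \infty$, thanks to (G1). The plan is to show that $u_t[f]$ and $\psi_t[f]$ both satisfy one and the same first-jump renewal equation, and then to invoke a standard Gronwall uniqueness argument to conclude they must agree.

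For $\psi_t[f]$, I condition on the first branching event $T$ of $X$. Let $(Z_s)_{s \ge 0}$ denote the $\bP$-trajectory of the initial particle. Given this trajectory, $T$ has conditional hazard rate $\gamma(Z_s)$, so the probability of no branching by time $t$ is $\exp(-\int_0^t \gamma(Z_s)\,ds)$. On $\{T > t\}$ we have $\langle f, X_t \rangle = f(Z_t)$; on $\{T = s\}$ with $s \in (0,t]$ the branching and strong Markov properties yield a conditional expectation of $\mathcal{E}_{Z_s}[\langle \psi_{t-s}[f], \mathcal{Z} \rangle] = m[\psi_{t-s}[f]](Z_s)$. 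Writing $G_t[g](x) := \mathbf{E}_x^{\bP}\bigl[\mathrm{e}^{-\int_0^t \gamma(Z_s)\,ds} g(Z_t)\bigr]$ for the Feynman--Kac semigroup of $\bP$ killed at rate $\gamma$, this yields
\begin{equation}\label{renewal-psi-prop}
\psi_t[f](x) = G_t[f](x) + \int_0^t G_s\bigl[\gamma \cdot m[\psi_{t-s}[f]]\bigr](x)\,ds.
\end{equation}

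An entirely parallel decomposition applies to $u_t[f]$. By construction, $Y$ evolves as $\bP$ with additional jumps at rate $\gamma(x)m(x)$ to a location distributed as $m(x)^{-1}\mathcal{E}_x[\mathcal{Z}(\cdot)]$. Conditioning on the first such jump, and using the key algebraic identity $B(x) - \gamma(x)m(x) = -\gamma(x)$ (so that the Feynman--Kac exponent $\mathrm{e}^{\int_0^t B(Y_s)\,ds}$ combined with the Poissonian survival factor $\mathrm{e}^{-\int_0^t \gamma(Y_s)m(Y_s)\,ds}$ collapses to the killing weight $\mathrm{e}^{-\int_0^t \gamma(Z_s)\,ds}$), one recovers \eqref{renewal-psi-prop} verbatim with $u$ in place of $\psi$. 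The integration of $u_{t-s}[f]$ against the jump kernel $m^{-1}\mathcal{E}_{\cdot}[\mathcal{Z}(\cdot)]$ produces the factor $m[u_{t-s}[f]]/m$, which cancels the leading $m$ in the jump rate $\gamma m$ to deliver exactly $\gamma \cdot m[u_{t-s}[f]]$ as the kernel in the convolution.

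Finally, uniqueness of bounded solutions of this renewal equation follows from a routine Gronwall argument: under (G1), $C' := \sup_x \gamma(x)m(x) < \infty$, and if $w^1, w^2$ are two bounded solutions, then
\[
\|w^1_t - w^2_t\|_\infty \;\le\; C' \int_0^t \|w^1_r - w^2_r\|_\infty\,dr,
\]
forcing $w^1 \equiv w^2$. Hence $u_t[f] = \psi_t[f]$, which is the claim. The main conceptual point of the argument is the algebraic coincidence $B = \gamma(m - 1)$: it is precisely this that makes the extra jumps of $Y$ under the Feynman--Kac weight reproduce the first-order mean effect of branching in $X$. The remaining difficulty is purely bookkeeping of the two first-jump decompositions and ensuring that the strong Markov property is applied correctly at the first jump/branching time.
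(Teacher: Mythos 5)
Your proof is correct and follows essentially the route the paper itself indicates (the paper omits this proof as ``fairly standard''): a first-jump decomposition showing that $\psi_t[f]$ and the Feynman--Kac functional both solve the same renewal equation driven by the $\gamma$-killed semigroup, with the cancellation $B-\gamma m=-\gamma$ doing the work, followed by Gronwall uniqueness for bounded solutions. The only step worth making explicit is that $\psi_t[f]$ is locally bounded in $t$ uniformly in $x$ under (G1) (e.g.\ by comparison of $N_t$ with a Yule process at rate $\sup_x\gamma(x)m(x)$), since this is what licenses the Gronwall argument.
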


\medskip 

In a similar manner, we can consider a many-to-two formula. For this, recall the operator $\V$ introduced in \eqref{2point}.

\medskip

\begin{lemma}\label{lem:M22}
Let $g\in L^{+}_{\infty}(E)$, $x \in E$ and $t \ge 0$. Suppose (G1) holds and that 
\[
\sup_{x\in E}\mathcal{E}_x[\langle 1, \mathcal{Z}\rangle^2] < \infty.
\]
Then we have the following many-to-two formula
\begin{align}
  \mathbb E_{\delta_x}\Big[\langle f, X_t\rangle \langle g, X_t\rangle \Big]
  &= \psi_{t}[fg](x)
  +\int_{0}^{t} \psi_{s}\Big[\gamma {\V}[\psi_{t-s}[f], \psi_{t-s}[g]] \Big](x)\d s.
\label{M22formula}
\end{align}
\end{lemma}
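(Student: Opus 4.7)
The strategy is to derive an integral equation for $\phi_t(x) := \mathbb{E}_{\delta_x}[\langle f, X_t\rangle\langle g, X_t\rangle]$ by conditioning on the first branching event of the initial particle, and then to verify that the right-hand side of \eqref{M22formula} satisfies the same equation, so that the conclusion follows by uniqueness (in analogy with the approach to Proposition \ref{prop:linear}).

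Let $T$ denote the first branching time of the particle initially at $x$. On $\{T>t\}$ the atomic measure consists of a single particle at $\xi_t$, where $\xi$ is distributed as the underlying Markov process with semigroup $\bP$, so its contribution to $\phi_t$ is $\mathbb{E}_x[(fg)(\xi_t)\exp(-\int_0^t \gamma(\xi_s)\d s)] =: \bP_t^\gamma[fg](x)$. On $\{T=s\le t\}$, the particle at $\xi_s$ fissions into offspring distributed as $\mathcal{Z}$, and each offspring spawns an independent copy of the MBP. Splitting the sum $\langle f, X_{t-s}\rangle\langle g, X_{t-s}\rangle = \sum_{i,j}\langle f, X^{(i)}_{t-s}\rangle\langle g, X^{(j)}_{t-s}\rangle$ into its diagonal ($i=j$) and off-diagonal ($i\ne j$) parts and using the branching property gives, after taking $\mathcal{E}_{\xi_s}$, the contributions $m[\phi_{t-s}](\xi_s)$ and $\V[\psi_{t-s}[f],\psi_{t-s}[g]](\xi_s)$ respectively. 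Altogether, this yields the Feynman--Kac integral equation
\begin{equation*}
\phi_t(x) = \bP_t^\gamma[fg](x) + \int_0^t \bP_s^\gamma\bigl[\gamma\, m[\phi_{t-s}] + \gamma\, \V[\psi_{t-s}[f],\psi_{t-s}[g]]\bigr](x)\,\d s.
\end{equation*}

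Next, I would verify that $\Phi_t(x)$, defined as the right-hand side of \eqref{M22formula}, satisfies this same equation. The key ingredient is the Feynman--Kac reformulation of Proposition \ref{prop:linear}, namely $\psi_t[h](x) = \bP_t^\gamma[h](x) + \int_0^t \bP_u^\gamma[\gamma\, m[\psi_{t-u}[h]]](x)\,\d u$ for any $h\in L^+_\infty(E)$, obtained by resolving the killing at rate $\gamma$ via Lemma~1.2, Chapter~4 of \cite{Dynkin2}. Applying this identity both to $\psi_t[fg]$ and, inside the integral term of $\Phi_t$, to each $\psi_s[\gamma\V[\psi_{t-s}[f],\psi_{t-s}[g]]]$, then swapping the order of integration via Fubini and substituting $s' = s-u$, the inner $s'$-integral rearranges precisely into $\Phi_{t-u}(x) - \psi_{t-u}[fg](x)$, because the $\V$-argument only depends on the difference $(t-u)-s'$. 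The two $\gamma\, m[\psi_{t-u}[fg]]$ contributions cancel, and we arrive at the same integral equation as for $\phi_t$.

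Finally, under (G1) together with the hypothesised bound on $\sup_{x\in E}\mathcal{E}_x[\langle 1,\mathcal{Z}\rangle^2]$, the rate $\gamma$, mean $m$, and the inhomogeneity $\gamma\V[\psi_{t-s}[f],\psi_{t-s}[g]]$ are bounded uniformly on compact time intervals, so a standard Gr\"onwall argument applied to $|\phi_t-\Phi_t|$ in the integral equation yields $\phi_t \equiv \Phi_t$, giving \eqref{M22formula}. The main obstacle I expect is the Fubini bookkeeping in the verification step: the nested integrals and the subtle dependence of $\V[\psi_{t-s}[f],\psi_{t-s}[g]]$ on the running time require careful tracking so that the $\psi_{t-u}[fg]$ terms cancel cleanly and the remainder reassembles into the same equation satisfied by $\phi_t$.
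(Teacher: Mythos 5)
Your proposal is correct and follows exactly the route the paper has in mind: the paper omits this proof as ``fairly standard,'' deferring to its references, and the standard argument there is precisely your first-branching-event decomposition into a Feynman--Kac integral equation (with the diagonal terms producing $m[\phi_{t-s}]$ and the off-diagonal terms producing $\V[\psi_{t-s}[f],\psi_{t-s}[g]]$), followed by verification that the right-hand side of \eqref{M22formula} solves the same equation and a Gr\"onwall uniqueness step. The only point to keep an eye on is the a priori local boundedness of $\phi_t$ needed to run Gr\"onwall, which is exactly where (G1) and the second-moment hypothesis on $\mathcal{Z}$ enter, as you indicate.
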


\medskip

As with the proof of Proposition \ref{prop:linear}, we have excluded the proofs of Lemma \ref{lem:M21} and Lemma \ref{lem:M22}, as they are fairly standard. In addition to \cite{MultiNTE}, we refer the reader to \cite{SNTE, MCNTE} for the proofs in the case of the NBP discussed in the introduction, and to~\cite{HR} for some general theory in the case of local branching.

\section{Three step approach to the Yaglom limit}\label{spinemoments}

The way we will approach the proofs of Theorems \ref{genmoments}, \ref{lineardecaygen} and \ref{Ygen} is through three fundamental steps.

\smallskip

\subsection{Step 1} 
The first step, which is detailed in Section \ref{sec:step1}, deals with the limiting behaviour of the martingale moments of the MBP, as presented in Theorem~\ref{genmoments}. To this end, let us consider the following martingale change of measure,
\begin{equation}
\left.\frac{\d \mathbb{P}^\varphi_\mu}{\d\mathbb{P}_\mu}\right|_{\mathcal{F}_t}  =
\frac{\langle \varphi, X_t\rangle}{\langle\varphi, \mu\rangle} \qquad t\geq 0, \mu\in\mathcal{M}(E).
\label{mgCOM}
\end{equation}
Note that this change of measure is well-defined due to the fact that under (G2), $\varphi$ is an eigenfunction, i.e.  
\begin{equation}
\label{harmonic}
\mathbb{E}_\mu [\langle\varphi , X_t\rangle ] = \langle\varphi, \mu\rangle, \qquad t\geq 0,\mu \in\mathcal{M}(E),
\end{equation} 
together with the branching Markov property. 

\medskip

Via a combination of the spine decomposition induced by \eqref{mgCOM} (which we will discuss in the next section), along with general results for Poisson processes and some combinatorial results, we will prove the following result.

\begin{theorem}\label{specialcase} 
Under the assumptions of Theorem \ref{genmoments} for some fixed $k \ge 1$, we have for all $j \le k$, 
\begin{equation}
\sup_{x\in E}\mathbb{E}_{\delta_x}^\varphi[\langle \varphi, X_t\rangle^{j}]<\infty, \qquad t\geq 0,
\label{boundedmoments2}
\end{equation}
and
\begin{equation}
\lim_{t\to\infty}\sup_{x \in E}\bigg|\frac{1}{t^{j}} \mathbb{E}^\varphi_{\delta_{x}}\left[\langle\varphi, X_t \rangle^{j} \right] -(j+1)!\left(\frac{\Sigma}{2}\right)^{j}\bigg| = 0.
\label{A2}
\end{equation}
\end{theorem}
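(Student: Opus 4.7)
The statement is equivalent to Theorem \ref{genmoments} by the defining relation of the change of measure \eqref{mgCOM}, namely $\mathbb{E}^\varphi_{\delta_x}[\langle\varphi, X_t\rangle^j] = \varphi(x)^{-1}\mathbb{E}_{\delta_x}[\langle\varphi, X_t\rangle^{j+1}]$. Working under $\mathbb{P}^\varphi$ is natural because the associated spine decomposition (Section \ref{sec:spine}) makes the origin of the $t^j$-growth transparent: the $j$-th moment will be driven by the combinatorics of selecting $j$ ``pieces'' among the spine's hanging subtrees.

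Under $\mathbb{P}^\varphi$, the process is represented as an immortal spine particle $\xi$ evolving under $\varphi$-tilted dynamics and branching at times $T_1<T_2<\cdots$ at a size-biased rate; at each such event one offspring is selected (with probability proportional to $\varphi$) to continue the spine, while the remaining offspring form a point measure $\mathcal{Z}_i^{\mathrm{ns}}$ that seeds independent $\mathbb{P}$-MBPs. Writing $Y_i$ for the $\varphi$-mass at time $t$ contributed by the subtrees born at the $i$-th spine event and $\mathcal{S}$ for the spine skeleton (trajectory, branching times, offspring data), we have
\[
\langle\varphi, X_t\rangle \;=\; \varphi(\xi_t) \;+\; \sum_{i:\,T_i\le t}Y_i,
\]
with $\{Y_i\}$ mutually independent conditional on $\mathcal{S}$, and $\mathbb{E}[Y_i\mid\mathcal{S}]=\langle\varphi,\mathcal{Z}_i^{\mathrm{ns}}\rangle$ by (G2).

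The proof proceeds by induction on $j$. The base case $j=1$ reduces to the many-to-two formula (Lemma \ref{lem:M22}) with $f=g=\varphi$: using $\psi_t[\varphi]=\varphi$ and the uniform spectral expansion \eqref{earlyspectralexpsgp}, the integral term $\int_0^t\psi_s[\gamma\V[\varphi]](x)\,\d s$ is asymptotic to $\varphi(x)\Sigma t$ uniformly in $x$, yielding the claim with constant $2!\cdot(\Sigma/2)=\Sigma$. For the inductive step, expand $\langle\varphi, X_t\rangle^j$ via the multinomial theorem, treating the bounded $\varphi(\xi_t)$ contribution as a lower-order correction, and expand $(\sum_iY_i)^j$ as a sum over surjections $\{1,\ldots,j\}\twoheadrightarrow S$ for subsets $S\subseteq\{i:T_i\le t\}$. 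Conditioning on $\mathcal{S}$ and invoking conditional independence, each factor $\mathbb{E}[Y_i^k\mid\mathcal{S}]$ is a moment of a sum of iid $\mathbb{P}$-MBPs starting from $\mathcal{Z}_i^{\mathrm{ns}}$, which by the inductive hypothesis behaves at leading order like $k!\,(\Sigma/2)^{k-1}(t-T_i)^{k-1}\langle\varphi,\mathcal{Z}_i^{\mathrm{ns}}\rangle$.

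The main obstacle, and the principal novelty, is the combinatorial and ergodic identification of the resulting constant. The contribution from subsets $S$ of size $p$ takes the form
\[
j!\,(\Sigma/2)^{j-p}\sum_{S:\,|S|=p}h_{j-p}\bigl((t-T_i)_{i\in S}\bigr)\prod_{i\in S}\langle\varphi,\mathcal{Z}_i^{\mathrm{ns}}\rangle,
\]
where $h_{j-p}$ is the complete homogeneous symmetric polynomial produced by summing $\prod_{\ell}(t-T_{i_\ell})^{k_\ell-1}$ over compositions $(k_1,\ldots,k_p)$ of $j$ into positive parts. The spine ergodic theorem of Section \ref{sec:ergodic}, combined with the key identity that the size-biased branching rate times the expected non-spine $\varphi$-mass equals $\gamma\V[\varphi]/\varphi$ (which integrates against the spine's stationary distribution $\varphi\tilde\varphi$ to give exactly $\Sigma$), shows that the $p$-th factorial moment measure of the marked point process $\sum_i\delta_{T_i}\langle\varphi,\mathcal{Z}_i^{\mathrm{ns}}\rangle$ is asymptotic to $\Sigma^p\,\d s_1\cdots\d s_p$ on $[0,t]^p$. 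Summing over $p$ and using the elementary identity $\sum_{p=1}^j(2^p/p!)\sum_{b_1+\cdots+b_p=j,\,b_\ell\ge1}\prod_\ell b_\ell^{-1}=j+1$ (a one-line generating-function computation from $[x^j](1-x)^{-2}=j+1$) collapses the total to $(j+1)!\,(\Sigma/2)^j t^j$. The uniform-in-$x$ moment bound \eqref{boundedmoments2} is obtained inductively along the same lines, with (G1) and the offspring moment assumption \eqref{genmomentcondition} supplying uniform error control at every step, so that the convergence in \eqref{A2} is indeed uniform in $x\in E$.
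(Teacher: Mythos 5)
Your proposal is correct and follows essentially the same route as the paper: equivalence with Theorem \ref{genmoments} via the change of measure \eqref{mgCOM}, the spine decomposition of $\langle\varphi, X_t\rangle$ into the (negligible) spine term plus conditionally independent subtrees hanging off the branch points, induction on the moment order with a multinomial expansion over compositions of $j$, the spine ergodic theorem of Section \ref{sec:ergodic} to turn each factor into $\Sigma$, and a closing combinatorial identity producing $(j+1)!$. The two points where you deviate are both sound and, if anything, cleaner: (i) you get the base case $j=1$ directly from the many-to-two formula of Lemma \ref{lem:M22} together with (G2), rather than from the spine representation as in Block 1 of the paper's proof; and (ii) your identity $\sum_{p=1}^{j}(2^{p}/p!)\sum_{b_1+\cdots+b_p=j,\,b_\ell\ge 1}\prod_\ell b_\ell^{-1}=j+1$ is exactly Lemma \ref{combinatoriclemma} after dividing through by $k!$ (since ${k \choose k_1,\cdots,k_j}\prod_i (k_i-1)! = k!\prod_i k_i^{-1}$), and your generating-function derivation from $\exp\bigl(-2\log(1-x)\bigr)=(1-x)^{-2}$ is a genuinely shorter proof of that lemma than the Bell-polynomial/Stirling-number argument in the Appendix.
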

Note that, thanks to \eqref{mgCOM}, this is equivalent to Theorem \ref{genmoments}. This result is new in its own right, independently of the Yaglom limit and little seems to be known in the literature for (asymptotic) limits of spatial branching processes.

\subsection{Step 2} For the next step, which we handle in Section \ref{Sectstep2}, we prove Theorem \ref{lineardecaygen}. Our approach will take inspiration from the original approach to the Yaglom limit theorem for the neutron branching process in \S5 of \cite{MWY}. As such, we develop a non-linear integral equation for the survival probability $\mathbb{P}_{\delta_{x}}(\zeta>t) $ and show that, using an analytical approach of the operators involved, its asymptotic behaviour is linear. However, one major difference with the setting of \S5 of \cite{MWY} is that the generality of the branching mechanism in this setting forces a significantly more complex analysis of the aforementioned non-linear equation. Hence, we first obtain coarse upper and lower bounds of order $1/t$, which are then used to obtain the precise limit.  

\medskip

We note that combining steps 1 and 2 then concludes the proof of Theorem \ref{Ygen}, but only in the special case when $f = \varphi$. Indeed, 

\begin{align}
\mathbb{E}_{\delta_{x}}[\langle \varphi, X_t\rangle^k|\zeta>t] 
& =\frac{1}{t^k}\frac{\mathbb{E}_{\delta_{x}}[\langle \varphi, X_t\rangle^k\mathbf{1}_{(\zeta>t)} ]}{\mathbb{P}_{\delta_{x}}(\zeta>t)}\notag\\
& =\varphi(x)\frac{ \mathbb{E}^\varphi_{\delta_{x}}[\langle \varphi, X_t\rangle^{k-1}/ t^{k-1} ]}{ t \mathbb{P}_{\delta_{x}}(\zeta>t)}\notag \\
&\to k! \left(\frac{\Sigma}{2}\right)^k, \quad t \to \infty,
\label{prelimit}
\end{align}
which gives the desired result on account of the fact that the exponential distribution is uniquely identifiable by its moments. We refer the reader to~\cite[section 30]{billingsley} and~\cite[Theorem 8.48]{breiman} for more general results regarding classical moment convergence problems. 

\medskip

\subsection{Step 3} At this point in the argument, we have established the desired convergence of martingale moments, and hence the Yaglom limit \eqref{YaglomGen}, for the case that $f = \varphi$. In the final step, detailed in Section \ref{Sectstep3}, we remove this restriction by considering the decomposition 
\[
f = \tilde{f} + \langle \tilde\varphi, f \rangle\varphi \,\text{ where }\, \tilde{f} = f -  \langle \tilde\varphi, f \rangle\varphi.
\]
The reasoning is a relatively standard way to wrap things up, e.g. as in \cite{Ellen, Yanxia, RSS2}. We use a second moment estimate to show that, for all $x \in E$,
$
\langle\tilde f, X_t\rangle /t
$
converges weakly to zero under $\mathbb{P}_{\delta_{x}}(\cdot\, |\, \zeta>t)$, as $t\to\infty$. As such, the principal contribution in the convergence of $\langle f, X_t\rangle /t $ as $t\to\infty$ is its projection on to $\varphi$, i.e. the limit of $\langle \tilde\varphi, f \rangle\langle\varphi, X_t\rangle/t$.

\section{Spine decomposition}\label{sec:spine}
Recall that
\[
  W_t \coloneqq \frac{\langle \varphi, X_t\rangle }{\langle \mu, \varphi\rangle}, \qquad x \in E, t \ge 0,
\]
is a mean one martingale and can thus be used to define the following change of measure,
\begin{equation}
\left.\frac{\d \mathbb{P}^\varphi_\mu}{\d\mathbb{P}_\mu}\right|_{\mathcal{F}_t}  =
\frac{\langle \varphi, X_t\rangle}{\langle\varphi, \mu\rangle} \qquad t\geq 0, \mu\in\mathcal{M}(E),
\label{genmgCOM}
\end{equation}
for the $(\bP, \Phi)$-MBP. 
It is well known that for many branching processes, the change of measure \eqref{genmgCOM} induces the so-called spine decomposition. Since this will be key to the proof of Theorem \ref{genmoments}, we first spend some time discussing it.

\medskip

To this end, let us introduce another $\mathcal{M}(E)$-valued stochastic process via the following pathwise construction. 
 \begin{itemize}
\item[1.] From the initial configuration $\mu\in \mathcal{M}(E)$ with an arbitrary enumeration of particles, the $i^*$-th individual is selected and marked `{\it spine}' so that, given $\mu$,
\begin{equation}
\text{the probability $i^* = j$ is given by }
\frac{\varphi(x_j)}{\langle\varphi, \mu\rangle}
\text{ for }j = 1,\cdots, N.
\label{howtopicki*}
\end{equation}
\item[2.] The individuals $j\neq i^*$ in the initial configuration that are not marked `{\it spine}', each issue independent copies of $(X, \mathbb{P}_{\delta_{x_j}})$ respectively.
\item [3.] The marked individual issues a single particle whose motion is determined by the semigroup
\[
  \bP_t^\varphi[f](x) \coloneqq \frac{1}{\varphi(x)}\mathbf{E}_{x}\left[{\rm e}^{\int_0^t \frac{B(Y_s)}{\varphi(Y_s)}(m [\varphi](Y_s) - \varphi(Y_s)) \d s}\varphi(Y_t)g(Y_t) \right], \qquad x \in E, f \in L_\infty^+(E).
\] 
\item[4.] The marked individual undergoes branching at the accelerated rate  
\begin{equation}
\rho(x): = \gamma(x)\frac{m[\varphi](x)}{\varphi(x)} 
\label{rho}
\end{equation} 
when  in physical configuration $x\in E$, at which point, it scatters a random number of particles according to the random measure on $E$ given by $(\mathcal{Z}, {\mathcal P}^\varphi_{x})$ where 
\begin{equation}
\frac{\d{\mathcal P}^\varphi_{x}}{\d {\mathcal P}_{x}} = \frac{\langle\varphi, \mathcal{Z}\rangle}{m[\varphi](x)}.
\label{pointprocessCOM}
\end{equation}
\item[5.] Given $\mathcal{Z}$ from the previous step, $\mu$ is redefined as $\mu = \mathcal{Z}$ and step 1 is repeated.
\end{itemize}

\medskip
 
The process above process describes the physical configuration of all the particles in the system at time $t \ge 0$, i.e. ignoring the marked genealogy. The spine decomposition induced by  \eqref{mgCOM} is summarised by the following theorem. 

 \begin{theorem}
 Under assumptions (G1) and (G2), the process constructed above in 1.-5. is Markovian and equal in law to $(X, \mathbb{P}^\varphi)$, where  $ \mathbb{P}^\varphi = (\mathbb{P}^\varphi_\mu, \mu\in \mathcal{M}(E))$ was defined in \eqref{genmgCOM}. 
\end{theorem}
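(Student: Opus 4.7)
The plan is to work on an enriched state space on which the spine particle's genealogical identity is tracked explicitly, verify the equality in law on the enriched space by a Radon--Nikodym computation, and then project back down by marginalising the spine label. To set this up, I would first construct a ``tagged'' version of the $(\bP,\Phi)$-MBP under $\mathbb{P}_\mu$: to each snapshot we append an index pointing at a distinguished line of descent, initialised by sampling a particle from $\mu$ with weights $\varphi(\cdot)/\langle\varphi,\mu\rangle$, and, at each branching event of the tagged particle, updating the label by sampling among the offspring with weights $\varphi(\cdot)/\langle\varphi,\mathcal{Z}\rangle$. Call the resulting measure $\tilde{\mathbb{P}}_\mu$, whose marginal on the physical process is $\mathbb{P}_\mu$. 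Extending the change of measure \eqref{genmgCOM} to the enriched space via the same tagging recipe produces a measure $\tilde{\mathbb{P}}^\varphi_\mu$, whose marginal on the physical process is $\mathbb{P}^\varphi_\mu$. The Radon--Nikodym derivative is unchanged, $\d\tilde{\mathbb{P}}^\varphi_\mu/\d\tilde{\mathbb{P}}_\mu|_{\mathcal{F}_t}=\langle\varphi,X_t\rangle/\langle\varphi,\mu\rangle$.

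The core computation is to dissect this Radon--Nikodym derivative along the spine's trajectory. Conditioning on a realised spine trajectory with jumps at times $0<s_1<\cdots<s_n\le t$, positions $Y_{s_i-}$ and offspring random measures $\mathcal{Z}_i$, the branching Markov property and the multiplicative form of $\langle\varphi,X_t\rangle/\langle\varphi,\mu\rangle$ produce a telescoping decomposition into: an initial factor $\varphi(Y_0)/\langle\varphi,\mu\rangle$ which cancels the initial spine-selection weight in $\tilde{\mathbb{P}}_\mu$ and yields step~1 of the construction; between-jump factors $\varphi(Y_{s_{i+1}-})/\varphi(Y_{s_i})$ which, combined with the original spine-motion contribution under $\tilde{\mathbb{P}}_\mu$ (whose jump rate was $\gamma(x)m(x)$), produce the Doob-type spine semigroup $\bP_t^\varphi$ and modify the effective spine jump rate to $\rho(x)=\gamma(x)m[\varphi](x)/\varphi(x)$; jump factors at each $s_i$ that tilt the offspring law from $\mathcal{P}_x$ to $\mathcal{P}^\varphi_x$ as in \eqref{pointprocessCOM}, after absorbing the spine-offspring selection probability $\varphi(Y_{s_i})/\langle\varphi,\mathcal{Z}_i\rangle$; and trivial factors along each non-spine subtree, reflecting that these continue to be independent $(\bP,\Phi)$-MBPs under both measures. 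Summing over the number of spine jumps then identifies the tilted dynamics on the enriched space with the construction in 1.--5.

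Markovianity of the constructed process is immediate, each ingredient in steps 1.--5. depending only on the present configuration. Projecting out the spine label preserves the Markov property because the above calculation identifies the conditional law of the spine identity given $X_t$ as the $\varphi$-size-biased distribution $\varphi(x_j)/\langle\varphi,X_t\rangle$, a function of $X_t$ alone; hence marginalising yields a Markov process whose law coincides with $\mathbb{P}^\varphi_\mu$.

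The principal obstacle is verifying the between-jump part of the telescoping, i.e.\ the Girsanov-type identification of the spine's motion semigroup. Here one must combine the $\varphi$-weighted motion of $Y$ with the accumulated exponential weight coming from the many-to-one formula of Lemma~\ref{lem:M21}, and use the harmonicity $\psi_t[\varphi]=\varphi$ from (G2) to recognise the resulting motion as that generated by $\bP_t^\varphi$ with modified jump rate $\rho$. Given this, the remaining pieces are mechanical and the proof reduces to a careful accounting of factors at the initial time, along non-spine subtrees, and at each branching event along the spine.
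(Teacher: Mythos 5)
The paper does not actually prove this theorem: it is stated as a known fact (``It is well known that \ldots the change of measure \eqref{genmgCOM} induces the so-called spine decomposition'') and no argument is supplied, so there is nothing in the text to compare your proposal against line by line. Your plan is the standard one from the spine literature (enrich the probability space with a marked line of descent, compute the Radon--Nikodym derivative along the spine, telescope, then marginalise the mark), and as a strategy it is the right one; in particular you correctly identify that the projection step rests on the conditional law of the spine label given $\mathcal{F}_t$ being the $\varphi$-size-biased law on the current configuration, which is what makes the unlabelled process Markovian.

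That said, what you have written is a plan rather than a proof, and it contains one genuine bookkeeping inconsistency that would derail the ``core computation'' if carried out as described. Under your reference measure $\tilde{\mathbb{P}}_\mu$ the tag is passed using $\varphi$-weights at each branching event of the tagged particle, so the tagged line of descent changes position at the \emph{branching} rate $\gamma(x)$, with the new position drawn from the kernel $\mathcal{E}_x[\sum_i \mathbf{1}_{\{x_i\in\cdot\}}\varphi(x_i)/\langle\varphi,\mathcal{Z}\rangle]$. It is \emph{not} the many-to-one process $Y$ of Lemma \ref{lem:M21}, whose jump rate is $\gamma(x)m(x)$ and which arises from size-biasing by offspring number, not by $\varphi$. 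Attributing the rate $\gamma(x)m(x)$ to the tagged particle under $\tilde{\mathbb{P}}_\mu$ and then trying to produce $\rho(x)=\gamma(x)m[\varphi](x)/\varphi(x)$ by tilting will not balance: the exponential compensators in the Girsanov step will not cancel, and it is precisely the cancellation (which uses the harmonicity $\psi_t[\varphi]=\varphi$ from (G2)) that makes $\bP^\varphi_t$ a conservative semigroup and delivers the rate $\rho$. Relatedly, your assertion that the Radon--Nikodym derivative on the enriched filtration ``is unchanged'' is really a \emph{definition} of $\tilde{\mathbb{P}}^\varphi_\mu$ (tag conditionally on the physical process in the same way under both measures), after which the entire content of the theorem is to show that this measure has the dynamics 1.--5.; that is exactly the computation you defer. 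A cleaner route, and the one usually taken (Hardy--Harris style), is to define the change of measure on the enriched space directly by a single-particle martingale built from the spine's trajectory --- the product of $\varphi(\text{spine})$-ratios, the exponential of the accumulated rate change, and the factors $\langle\varphi,\mathcal{Z}_i\rangle/m[\varphi](Y_{s_i})$ from \eqref{pointprocessCOM} --- and then verify that its projection onto $\mathcal{F}_t$ equals $\langle\varphi,X_t\rangle/\langle\varphi,\mu\rangle$; this makes both the spine dynamics and the marginalisation transparent. As it stands, your proposal is a correct outline with the decisive step unexecuted and one rate mis-specified.
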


We are also be interested in the evolution of the single marked genealogical line of descent, which will be referred to simply as {\it the spine}. The following lemma identifies the spine as a change of measure applied to the process $Y$ defined in Lemma \ref{lem:M21}.

 \begin{lemma}\label{spinemarkov}
Under assumptions (G1) and (G2),  the spine is equal in law to $(Y, \mathbf{P}^\varphi )$ where
  \begin{equation}
\left.\frac{\d \mathbf{P}^\varphi_{x}}{\d \mathbf{P}_{x}}\right|_{\mathcal{F}_t} =
{\rm e}^{\int_0^t B(Y_s)\d s}\frac{\varphi(Y_t)}{\varphi(x)}\mathbf{1}_{\{t<\mathtt{k}\}},\qquad t\geq 0,x \in E,
\label{NRWCOM}
\end{equation}
where $\mathtt{k} \coloneqq \inf\{t \ge 0 : Y_t \notin E\}$ and we recall that $B(x) = \gamma(x)(m(x) - 1)$. From this 
we see that the semigroup $(\emph{\texttt{P}}^\varphi_t, t\geq 0)$ associated to $(Y, \mathbf{P}^\varphi )$ is conservative and satisfies, for $f\in L^+_\infty(E)$, 
$
\emph{\texttt{P}}^\varphi_t[f] ={\varphi^{-1}}\psi_t[\varphi f]$, $t\geq 0,
$
with  stationary limit distribution 
\[
\varphi(x)\tilde\varphi(\d x), \qquad   x \in E.
\]
\end{lemma}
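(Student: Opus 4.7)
The plan has three components. First, I would verify that~\eqref{NRWCOM} gives a consistent family of probability measures. This is immediate from the many-to-one formula of Lemma~\ref{lem:M21} applied to $f = \varphi$, combined with the right-eigenfunction identity $\psi_t[\varphi] = \varphi$ from (G2), which together yield
\begin{equation*}
\mathbf{E}_x\bigl[{\rm e}^{\int_0^t B(Y_s)\d s}\varphi(Y_t)\mathbf{1}_{\{t<\mathtt{k}\}}\bigr] = \psi_t[\varphi](x) = \varphi(x).
\end{equation*}
Thus the density has mean one; the martingale property in $t$ then follows from the Markov property of $Y$ under $\mathbf{P}_x$ together with the semigroup identity $\psi_{t+s}[\varphi] = \psi_t[\psi_s[\varphi]] = \varphi$.

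Second, the semigroup identity $\emph{\texttt{P}}^\varphi_t[f] = \varphi^{-1}\psi_t[\varphi f]$ is a direct computation: for $f \in L^+_\infty(E)$,
\begin{equation*}
\emph{\texttt{P}}^\varphi_t[f](x) = \mathbf{E}^\varphi_x[f(Y_t)] = \frac{1}{\varphi(x)}\mathbf{E}_x\bigl[{\rm e}^{\int_0^t B(Y_s)\d s}\varphi(Y_t)f(Y_t)\mathbf{1}_{\{t<\mathtt{k}\}}\bigr] = \frac{\psi_t[\varphi f](x)}{\varphi(x)},
\end{equation*}
again by Lemma~\ref{lem:M21}. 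Setting $f\equiv 1$ and using $\psi_t[\varphi] = \varphi$ gives conservativity. Invariance of $\varphi(x)\tilde\varphi(\d x)$ then follows by integrating the semigroup identity against this measure and invoking the left-eigenmeasure relation in~\eqref{earlyleftandright}, namely
\begin{equation*}
\int_E \emph{\texttt{P}}^\varphi_t[f](x)\varphi(x)\tilde\varphi(\d x) = \langle \tilde\varphi, \psi_t[\varphi f]\rangle = \langle \tilde\varphi, \varphi f\rangle = \int_E f(x)\varphi(x)\tilde\varphi(\d x),
\end{equation*}
and convergence to this stationary law follows from the uniform exponential decay in~\eqref{earlyspectralexpsgp}.

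Third and most delicate, one must identify the marginal law of the marked single particle in the construction 1.--5.\ with that of $(Y, \mathbf{P}^\varphi)$. My strategy is to split the Radon-Nikodym derivative in~\eqref{NRWCOM} along the jump structure of $Y$. Between jumps, $Y$ moves according to $\bP$, so restricting the density to inter-jump intervals realises a Doob $h$-transform of $\bP$ with harmonic factor $\varphi$ and exponential discount ${\rm e}^{\int_0^\cdot B(Y_s)\d s}$; a short generator computation shows this coincides with the semigroup $\bP^\varphi_t$ prescribed in step~3 of the construction. At a jump time with $Y_{t-}=x$, size-biasing the original jump mechanism (rate $\gamma(x)m(x)$, kernel $m(x)^{-1}\mathcal{E}_x[\mathcal{Z}(\cdot)]$) by the multiplier $\varphi(Y_t)/\varphi(Y_{t-})$ produces the accelerated jump rate $\rho(x) = \gamma(x)m[\varphi](x)/\varphi(x)$ from~\eqref{rho}, and a $\varphi$-weighted jump law that coincides with the composite distribution obtained by first drawing the offspring configuration under $\mathcal{P}^\varphi_x$ as in~\eqref{pointprocessCOM} and then sampling the successor spine point via the selection rule~\eqref{howtopicki*}. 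The main obstacle is the rigorous bookkeeping of this size-biasing and the generator identification for a general piecewise-Markov process on a Lusin space with non-local branching, but it is a well-trodden spine-decomposition computation that can be executed either by infinitesimal-generator matching on a suitable core, or by decomposing $\mathbf{P}^\varphi$ along successive jumps and comparing the resulting excursion laws with those read off from steps 3.--4.\ of the construction.
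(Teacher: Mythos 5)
Your proposal is correct, and in fact supplies more detail than the paper, which states Lemma \ref{spinemarkov} without proof as part of the standard spine-decomposition folklore. Your first two components are complete: the mean-one and martingale properties of the density in \eqref{NRWCOM} follow exactly as you say from Lemma \ref{lem:M21} and $\psi_t[\varphi]=\varphi$, the semigroup identity and conservativity are immediate, and invariance plus convergence to $\varphi(x)\tilde\varphi(\d x)$ follow from \eqref{earlyleftandright} and \eqref{earlyspectralexpsgp} (note only that the normalisation $\langle\tilde\varphi,\varphi\rangle=1$ is what makes this a probability measure). Your third component identifies precisely the two substantive verifications: the size-biased jump rate $\gamma(x)m(x)\cdot\mathcal{E}_x[\langle\varphi,\mathcal{Z}\rangle]/(m(x)\varphi(x))=\rho(x)$ agrees with \eqref{rho}, and the $\varphi$-weighted jump kernel $\mathcal{E}_x[\int_A\varphi(y)\mathcal{Z}(\d y)]/m[\varphi](x)$ agrees with the composite of \eqref{pointprocessCOM} and the selection rule \eqref{howtopicki*}, since the factor $\langle\varphi,\mathcal{Z}\rangle$ cancels between the two stages. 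The remaining excursion-by-excursion bookkeeping that you defer is genuinely routine (conditioning on successive jump times of $Y$ and matching the resulting product densities), and leaving it at the level of a plan is consistent with --- indeed more generous than --- the paper's own treatment. One small caution: between jumps the weight ${\rm e}^{\int_0^t B(Y_s)\d s}\varphi(Y_t)/\varphi(x)$ restricted to the no-jump event is not itself a probability density, since $\varphi$ is harmonic for $\psi_t$ rather than for $\bP_t$ alone, so the inter-jump motion must be extracted by conditioning on the jump data rather than by a naive restriction; your proposed decomposition along successive jumps handles this correctly.
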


\bigskip

In particular, the above two results mean that under $\mathbb{E}^\varphi$, we can write
\begin{equation}
\frac{\langle\varphi, X_t\rangle}{t} = \frac{\varphi(Y_t)}{t} + \frac{1}{t}\sum_{j = 1}^{n_t} \Xi_j(Y_{T_j}, t-T_j),
\label{reordersum}
\end{equation}
where $n_t$ is the number of fission events along the spine up to time $t$, whose times are denoted $(T_j, j \geq 1)$ and, given $x\in E$ and $0\leq u<\infty$,  the random variables $\Xi_j(x,u)$ are independent and equal in law to 
\begin{equation}
\sum_{\substack{i = 1 \\ i \neq i^*}}^N
\langle\varphi, X^i_{u}\rangle 
\,\text{ under }\,\eta^\varphi_{x} \coloneqq \mathcal{P}^\varphi_{x}\bigotimes_{i = 1}^N \mathbb{P}_{\delta_{x_i}},
\label{Xi}
\end{equation}
{where $i^*$ denotes the index of the spine particle. {To be more precise, $\eta^\varphi_x$ is the law of producing offspring according to $\mathcal{P}^\varphi_x$, from which the spine index $i^*$ can be selected empirically according to \eqref{howtopicki*},  and  then  $\langle\varphi, X^i_{u}\rangle $ can be sampled for $i\neq i^*$ under the respective $\mathbb{P}_{\delta_{x_i}}$. }
Under $\mathbb{P}^\varphi$,  particles alive at time $t$ thus consist of the spine particle at time $t$ plus those that come from independent copies of the original process that have branching from the spine before time $t$.}
Note that, as $\Vert\varphi\Vert_\infty<\infty$, the spine term in \eqref{reordersum} converges uniformly to zero. 

\medskip

For  $x \in E$ and $t>0$, suppose we define the random variable  $\textswab{u}_t$ with law $\textswab{P}^t_{Y}$, which depends on the path of $Y$ such that
\begin{equation}
\textswab{P}^t_{Y}(\textswab{u}_t \in \d s) =\frac{ \rho(Y_s) }{\int_0^t \rho(Y_u)\d u}\d s,
\label{U_t}
\end{equation}
where $\rho$ was defined in \eqref{rho}. Recalling that the ordering of the arrivals in the sum of the Poisson arrival process $(T_i, i=1,\cdots, n_t)$ is not important, given $n_t$ and $Y$, we can appeal to standard descriptions of Poisson point processes to deduce that \eqref{reordersum} can equivalently be written 
\begin{equation}
\frac{\langle\varphi, X_t\rangle}{t} =  \frac{\varphi(Y_t)}{t}+ \frac{1}{t}\sum_{i = 1}^{n_t} \Xi_i(Y_{\textswab{u}^i_t}, t-\textswab{u}^i_t),
\label{reordersum2}
\end{equation}
where, conditional on $n_t$ and $Y$,  $\textswab{u}_t^i$ are iid  and their law is given by \eqref{U_t}.

\section{Ergodic properties of the spine}\label{sec:ergodic}
Next, we state and prove a theorem relating to the long-term behaviour of the spine, which will be of general use throughout our computations. It is loosely based on Lemma 5.1 of \cite{RSS2}. We emphasise that it only concerns the spine and not the whole branching process, thus we will only need to work with $(Y, \mathbf{P}^\varphi)$. In order to state it, let us introduce a class of functions $\mathcal{C}$ on 
 $E\times [0,\infty)\times [0,\infty)$ such that $F$ 
 belongs to class $\mathcal{C}$ if 
 \[
 F(x,s):= \lim_{t\to\infty} F(x,s, t), \qquad x \in E, s\geq 0,
 \]
  exists,
\begin{equation}
\sup_{x \in E, 0<s<1}| \varphi(x) F(x, s)| <\infty,
\label{ass1}
\end{equation}
and 
\begin{equation}
\lim_{t\to\infty} \sup_{x \in E}\varphi(x)|F(x,s)- F(x,s, t)|  = 0.
\label{ass2}
\end{equation}

\begin{theorem}\label{Ylem}
Suppose (G1) and (G2) hold. For any $k \ge 1$ and $F_1,\cdots, F_k\in\mathcal{C}$, we have
\[
\lim_{t\to\infty}\sup_{x \in E}\left| \mathbf{E}^\varphi_{x} \left[\prod_{i =1}^k \int_0^1 F_i(Y_{ut} , u, t)\d u\right] - 
\prod_{i = 1}^k\int_0^1 \langle \varphi\tilde\varphi  , F_i(\cdot, u)\rangle\d u \right|= 0,
\]
where $F_i(\cdot, u) = \lim_{t\to\infty}F_i( \cdot, u, t)$.
\end{theorem}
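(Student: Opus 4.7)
The strategy is to reduce to pointwise-in-$(u_1,\ldots,u_k)$ uniform convergence of the multi-point expectation under the spine, and then pass the $u$-integrals through by dominated convergence. Writing the product of integrals as a multiple integral and exchanging order via Fubini (justified by the uniform moment bound derived below), the quantity inside the absolute value equals
\[
\int_{[0,1]^k}\mathbf{E}^\varphi_{x}\!\left[\prod_{i=1}^k F_i(Y_{u_i t},u_i,t)\right]du_1\cdots du_k - \prod_{i=1}^k\int_0^1\langle\varphi\tilde\varphi,F_i(\cdot,u)\rangle\,du,
\]
so the task splits into (a) showing that, for almost every $(u_1,\ldots,u_k)\in[0,1]^k$, the multi-point expectation converges uniformly in $x$ to $\prod_i\langle\varphi\tilde\varphi,F_i(\cdot,u_i)\rangle$, and (b) dominating the multi-point expectation uniformly in $t$ and $(u_1,\ldots,u_k)$ by a Lebesgue-integrable function on $[0,1]^k$.

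For (a), fix a generic $(u_1,\ldots,u_k)$ with distinct coordinates and relabel so that $0<u_1<\cdots<u_k<1$. Applying the Markov property of $(Y,\mathbf{P}^\varphi)$ successively at the times $u_1t<\cdots<u_kt$ gives
\[
\mathbf{E}^\varphi_{x}\!\left[\prod_{i=1}^k F_i(Y_{u_it},u_i,t)\right]=\texttt{P}^\varphi_{u_1t}\!\left[F_1(\cdot,u_1,t)\,\texttt{P}^\varphi_{(u_2-u_1)t}\!\left[F_2(\cdot,u_2,t)\cdots\texttt{P}^\varphi_{(u_k-u_{k-1})t}[F_k(\cdot,u_k,t)]\right]\right](x).
\]
Recalling from Lemma \ref{spinemarkov} that $\texttt{P}^\varphi_s[g]=\varphi^{-1}\psi_s[\varphi g]$, assumption (G2) rewrites as
\[
\bignorm{\texttt{P}^\varphi_s[g]-\langle\varphi\tilde\varphi,g\rangle}_\infty = O\bigl(e^{-\varepsilon s}\|\varphi g\|_\infty\bigr),
\]
while (\ref{ass1})--(\ref{ass2}) ensure that, for each fixed $u\in(0,1)$, $\|\varphi F_i(\cdot,u,t)\|_\infty$ is bounded in $t$ eventually and $\langle\varphi\tilde\varphi,F_i(\cdot,u,t)\rangle\to\langle\varphi\tilde\varphi,F_i(\cdot,u)\rangle$. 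Peeling off one semigroup factor at a time from the inside out via the decomposition
\[
\texttt{P}^\varphi_s[F\cdot G]-\langle\varphi\tilde\varphi,F\rangle\,C = \texttt{P}^\varphi_s[F(G-C)] + \bigl(\texttt{P}^\varphi_s[F]-\langle\varphi\tilde\varphi,F\rangle\bigr)C,
\]
each iteration replaces $\texttt{P}^\varphi_{(u_{i+1}-u_i)t}[F_{i+1}\cdots]$ by its limit $\prod_{j>i}\langle\varphi\tilde\varphi,F_j(\cdot,u_j)\rangle$ with an error vanishing uniformly in $x$, since every gap $u_{i+1}-u_i$ (and $u_1$) is strictly positive so that both the ``inner tail'' $G-C$ and the (G2) error $Me^{-\varepsilon s}\|\varphi F\|_\infty$ go to $0$ as $t\to\infty$. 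This yields (a).

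For (b), combining $\texttt{P}^\varphi_s[g]=\varphi^{-1}\psi_s[\varphi g]$ with (G2) and the finiteness of $\tilde\varphi$ yields the crucial \emph{time-independent} operator bound
\[
\|\texttt{P}^\varphi_s[g]\|_\infty \le M\,\|\varphi g\|_\infty,\qquad s\ge 0,
\]
for some constant $M<\infty$. Iterating this through the telescoped expression above, and using (\ref{ass1}) together with the eventual boundedness of $\|\varphi F_i(\cdot,u,t)\|_\infty$ from (\ref{ass2}), one obtains
\[
\left|\mathbf{E}^\varphi_{x}\!\left[\prod_{i=1}^k F_i(Y_{u_it},u_i,t)\right]\right|\le M^k\prod_{i=1}^k \sup_{u\in(0,1)}\|\varphi F_i(\cdot,u,t)\|_\infty,
\]
uniformly in $x\in E$ and in $(u_1,\ldots,u_k)\in[0,1]^k$. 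Dominated convergence over $[0,1]^k$, combined with (a) and Fubini, then delivers the claimed limit $\prod_i\int_0^1\langle\varphi\tilde\varphi,F_i(\cdot,u)\rangle\,du$.

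The main obstacle is the control over the degenerate subsets of $[0,1]^k$ where consecutive coordinates coalesce or $u_1$ approaches $0$: on these sets the exponential rate in (G2) is useless, and the convergence of part (a) fails to be uniform in $(u_1,\ldots,u_k)$. The essential ingredient that bypasses this difficulty is the \emph{time-independent} operator estimate $\|\texttt{P}^\varphi_s[g]\|_\infty\le M\|\varphi g\|_\infty$, which together with the $\varphi$-weighting built into (\ref{ass1}) provides the Lebesgue-integrable dominating function needed on the full cube, without any further regularity hypothesis on the $F_i$'s.
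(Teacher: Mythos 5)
Your argument is correct and follows essentially the same route as the paper: reduce to pointwise-in-$(u_1,\dots,u_k)$, uniform-in-$x$ convergence of the multi-point expectation and then integrate over the cube by dominated convergence, with the multi-point convergence obtained by applying the Markov property at the successive times and peeling off one semigroup factor at a time via (G2) and (\ref{ass1})--(\ref{ass2}) --- this is exactly the paper's induction on $k$, merely unrolled as a telescoping sum. The only (welcome) difference is that you make explicit the time-independent bound $\|\texttt{P}^\varphi_s[g]\|_\infty \le M\|\varphi g\|_\infty$ used to justify the domination on the degenerate part of the cube, a point the paper leaves implicit.
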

\begin{proof}
We will show that, for $0\leq u_1,\cdots, u_k\leq 1$ and $F_1,\cdots, F_k\in\mathcal{C}$,
\begin{equation}
\lim_{t\to\infty}\sup_{x \in E}\left| \mathbf{E}^\varphi_{x} \left[\prod_{i =1}^k  F_i(Y_{u_it} , u_i, t)\right] -\prod_{i =1}^k \langle \varphi\tilde\varphi  , F_i(\cdot, u_i)\rangle\right|=0.
\label{useinduction}
\end{equation}
Once that has been established, it is a simple matter of dominated convergence to deduce that,
\begin{align*}
& \lim_{t\to\infty}\sup_{x \in E}\left| \mathbf{E}^\varphi_{x} \left[\prod_{i =1}^k \int_0^1 F_i(Y_{ut} , u, t)\d u\right] - 
\prod_{i =1}^k\int_0^1 \langle \varphi\tilde\varphi  , F_i(\cdot, u)\rangle\d u \right|\\
&= \lim_{t\to\infty}\sup_{x \in E}\Bigg|\mathbf{E}^\varphi_{x} \left[  \int_0^1\cdots\int_0^1 
\prod_{i =1}^k
F_i(Y_{u_i t}, u_i, t)
\, \d u_1\cdots\d u_k\right]\\
&\hspace{5cm}
- 
  \int_0^1\cdots\int_0^1 \prod_{i =1}^k\langle \varphi\tilde\varphi  , F_i(\cdot, u_i)\rangle\, \d u_1\cdots\d u_k \Bigg|\\
  &\leq  \int_0^1\cdots\int_0^1 \lim_{t\to\infty}\sup_{x \in E}\Bigg|\mathbf{E}^\varphi_{x} \left[ 
\prod_{i =1}^k
F_i(Y_{u_i t}, u_i, t)
\right]
- 
  \prod_{i =1}^k\langle \varphi\tilde\varphi  , F_i(\cdot, u_i)\rangle \Bigg|\, \d u_1\cdots\d u_k\\
  &=0,
\end{align*}
as required. 
\medskip

The proof of \eqref{useinduction} works by induction. 
We start  by proving \eqref{useinduction} in the case that $k = 1$.  Suppose $F\in\mathcal{C}$.
As $\mathbf{E}^\varphi_{x} \left[ F(Y_{ut} , u, t)\right]=\texttt{P}^\varphi_{ut}[F(\cdot, u, t)](x)$, we can use Lemma \ref{spinemarkov}  and write 
\begin{align}
&\sup_{x \in E}
\left|
\texttt{P}^\varphi_{ut}[F(\cdot, u, t) ](x)- \langle \varphi\tilde\varphi  , F(\cdot, u)\rangle
\right|\notag\\
&\leq \sup_{x \in E}\frac{1}{\varphi(x)}\left|\psi_{ut}[\varphi F(\cdot, u, t)] (x)-  \psi_{ut}[\varphi F(\cdot, u)](x)\right|\notag\\
& \hspace{1cm} + \sup_{x \in E}\left|\frac{1}{\varphi(x)} \psi_{ut}[\varphi F(\cdot, u)](x) - \langle\tilde\varphi ,\varphi F(\cdot, u)\rangle\right|.
\label{twoabs}
\end{align}
Since $\psi_t$ is a linear operator, from \eqref{ass2}, for $t$ sufficiently large, the first term on the right-hand side of \eqref{twoabs} can be estimated by  $\varphi^{-1}\psi_{ut}[\varepsilon] $ for each $\varepsilon>0$. 
Hence, by the triangle inequality, the fact that  $\langle\tilde\varphi , \varepsilon\rangle \leq c\varepsilon \langle\tilde\varphi, 1\rangle $ for some constant $c > 0$ and (G2), for $t$ sufficiently large,
\begin{align}
&\sup_{x \in E}
\left|
\texttt{P}^\varphi_{ut}[F(\cdot, u, t) ](x)- \langle \varphi\tilde\varphi  , F(\cdot, u)\rangle
\right|\notag\\
&\leq \sup_{x \in E}\left|\frac{1}{\varphi(x)}\psi_{ut}[\varepsilon](x)
-\langle\tilde\varphi , \varepsilon\rangle
\right| +\varepsilon \Vert\tilde\varphi\Vert_1\notag\\
& \hspace{1cm} + \sup_{x \in E}\left|\frac{1}{\varphi(x)} \psi_{ut}[\varphi F(\cdot, u)](x) - \langle\tilde\varphi ,\varphi F(\cdot, u)\rangle\right|\notag\\
&\leq C({\rm e}^{-\alpha t}  + \varepsilon)
\label{twoabs2}
\end{align}
for some strictly positive constants $C, \alpha$, which exist thanks to (G2). This ensures that the left-hand side of  \eqref{twoabs} tends to zero as $t\to\infty$ as required.

\medskip

 Now suppose that \eqref{useinduction} is true for $k = n-1$ and,  without loss of generality, that  $0\leq u_1\leq \cdots\leq u_n\leq 1$. Then, by the Markov property,
\begin{align*}
\mathbf{E}^\varphi_{x} \left[ 
\prod_{i =1}^n
F_i(Y_{u_i t}, u_i, t)
\right] =\int_{E} \mathbf{P}^\varphi_{x}(Y_{u_1 t} \in \d y')
F_1(y', u_1, t)\mathbf{E}^\varphi_{y'} \left[ 
\prod_{i =2}^n
F_i(Y_{(u_i-u_1) t}, u_i, t)
\right].
\end{align*}
Now define 
\[
\tilde{F}(y', u_1, t) = F_1(y', u_1, t)\mathbf{E}^\varphi_{y'} 
\left[ \prod_{i =2}^n F_i(Y_{(u_i-u_1) t}, u_i, t).\right].
\]
Note, moreover, that, using the induction hypothesis,
\[
\tilde{F}(y', u_1) := \lim_{t\to\infty} \tilde{F}(y', u_1, t)= F_1(y', u_1)
\prod_{i =2}^{n} \langle \varphi\tilde\varphi  , F_i(\cdot, u_i)\rangle, \qquad y' \in E,
\]
which further respects the required boundedness conditions in the statement of the proposition.
Hence, appealing again to the induction hypothesis for $n = 1$, we get the required result. 
\end{proof}

\section{Step 1: Proof of Theorem \ref{genmoments}}\label{sec:step1}
We now prove Theorem \ref{genmoments} and thus assume that (G1), (G2) and \eqref{genmomentcondition} are in force. The strategy of the proof will be to use induction. For the inductive step, we use an interim calculation that looks at the asymptotic behaviour of contributions to the $k$-th moment that comes from the subtrees along the spine. 

\begin{proof}
Recall that Theorem \ref{genmoments} can be equivalently formulated as Theorem \ref{specialcase}.
As previously mentioned, the spine decomposition will play an important role in the proof and we will rely heavily on the decomposition \eqref{reordersum2}.  In addition to this, a number of combinatorial results for moment formulae will come into play, for which we have placed two fundamental results in the Appendix that we will call upon. The proof is long and we break it into a number of  blocks of calculations which fall under informal headings.

\medskip

{\it {\bf Block 1:} The theorem holds for  $k = 1$.}

\medskip

 We start by noting that, the moment assumption \eqref{genmomentcondition} implies that both $\textstyle\sup_{x\in E}m(x)<\infty$ and  $\textstyle\sup_{x\in E}\V[\varphi](x)<\infty$.

\medskip

Recalling $\eta^\varphi$ introduced in \eqref{Xi}, we also note that, for $x \in E$ and $0\leq u\leq t$, we have
\begin{align}
\eta^\varphi_{x}
\left[ 
\sum_{\substack{i = 1 \\ i \neq i^*}}^N
\langle\varphi, X^i_{t- u}\rangle 
 \right]\notag&=\mathcal{E}_{x}\left[\frac{\langle\varphi , \mathcal{Z}\rangle }{\mathcal{E}_{x} [\langle\varphi , \mathcal{Z}\rangle]}\sum_{j = 1}^N\frac{\varphi(x_j)}{\langle\varphi, \mathcal{Z}\rangle} \sum_{\substack{i = 1 \\ i \neq j}}^N\mathbb{E}_{\delta_{x_i}}[\langle\varphi, X^i_{t- u}\rangle]
\right]\notag\\
&=\frac{1}{\mathcal{E}_{x} [\langle\varphi , \mathcal{Z}\rangle]}
\mathcal{E}_{x}
\left[\sum_{j = 1}^N\varphi(x_j)\sum_{\substack{i = 1 \\ i \neq j}}^N\varphi(x_i)\right]\notag\\
&=\frac{{\V}[\varphi](x)}{\mathcal{E}_{x} [\langle\varphi , \mathcal{Z}\rangle]},
\label{36}
\end{align}
where we have used the martingale property from~\eqref{mgCOM} to obtain the second equality.

Using the spine decomposition \eqref{reordersum2}, we may thus write 
\begin{align}
\mathbb{E}^\varphi_{\delta_{x}}\left[\frac{\langle\varphi, X_t\rangle}{t}\right]  -
\mathbb{E}^\varphi_{\delta_{x}}\left[ \frac{\varphi(Y_t)}{t}\right] 
&= \frac{1}{t}\mathbb{E}^\varphi_{\delta_{x}}
\left[\sum_{i = 1}^{n_t}  
\frac{{\V}[\varphi]{(Y_{\textswab{u}_t^i})}}{\mathcal{E}_{Y_{\textswab{u}_t^i}} [\langle\varphi , \mathcal{Z}\rangle]}
\right]\notag\\
&= \mathbb{E}^\varphi_{\delta_{x}}
\left[
\frac{n_t}{t}
\frac{\int_0^t 
\rho(Y_s) \frac{{\V}[\varphi]{(Y_{s})}}{\mathcal{E}_{Y_{s}} [\langle\varphi , \mathcal{Z}\rangle]}
\d s
}
{\int_0^t  
\rho(Y_u) \d u}
\right]\notag\\
&=
\mathbb{E}^\varphi_{\delta_{x}}\left[
\frac{n_t}{t}
\frac{
\int_0^t
[
\gamma(Y_s)
  {\V}[\varphi](Y_{s})/  \varphi(Y_{s} ) 
  ]\d s
}
{\int_0^t  
\rho(Y_u) \d u
}
\right]
\notag\\
&
=
\frac{1}{t}
\mathbf{E}^\varphi_{x}\left[
\int_0^t
\frac{
\gamma(Y_s)
  {\V}[\varphi](Y_{s})
  }
  { \varphi(Y_{s} ) 
  }\d s
\right],
\label{spine+sum3}
\end{align}
where we have used \eqref{rho} in the penultimate line and that, given $Y$, $n_t$ is Poisson distributed with mean $\textstyle \int_0^t \rho(Y_u)\d u$ to obtain the final equality. {\color{black}To show that \eqref{boundedmoments} holds, due to assumption (G1), we have $\textstyle \bar B \coloneqq \sup_{x \in E}B(x) \vee 1 < \infty$. Due to this, Fubini's theorem and the change of measure \eqref{NRWCOM}, we have
\begin{align*}
  \mathbf{E}^\varphi_{x}\left[\int_0^t\frac{\gamma(Y_s)  {\V}[\varphi](Y_{s})}{\varphi(Y_{s} )}\d s\right]
  &= \int_0^t\frac{1}{\varphi(x)}
  \mathbf{E}_x\left[ {\rm e}^{\int_0^sB(Y_u)\d u}\varphi(Y_s)\frac{\gamma(Y_s)  {\V}[\varphi](Y_{s})}{\varphi(Y_{s})}\d s\right] \\
  & \le {\rm e}^{\bar B t}
  \int_0^t \frac{1}{\varphi(x)}\mathbf{E}_x\left[\gamma(Y_s)  {\V}[\varphi](Y_{s})\right]\d s \\
  & \le {\rm e}^{\bar B t} \int_0^t  (C{\rm e}^{-\varepsilon s} + \Sigma) \d s,
\end{align*}
where we have used assumption (G2) in the final line. Due to \eqref{genmomentcondition}, the right-hand side above is finite and hence, combining this with \eqref{spine+sum3}, \eqref{boundedmoments2} (equivalently, \eqref{boundedmoments}) holds for $k = 1$.}

To prove \eqref{A2} (equivalently, \eqref{A}), returning to the right-hand side of \eqref{spine+sum3}, changing variables so that $s= ut$ and noting that the spine term will converge to $0$, we can use \eqref{spine+sum3}  to get
\begin{align}
\lim_{t\to\infty} \sup_{x \in E}\left|
\mathbb{E}^\varphi_{\delta_{x}}\left[\frac{\langle\varphi, X_t\rangle}{t}\right]
-\Sigma\right|
&=\lim_{t\to\infty} \sup_{x \in E}\left|
\mathbf{E}^\varphi_{x}\left[
\int_0^1 \frac{\gamma(Y_{ut}){\V}[\varphi](Y_{ut})}
  {\varphi(Y_{ut})} \d u\right] - \Sigma \right|
\notag\\
&=\lim_{t\to\infty} \sup_{x \in E}\left|
\int_0^1
\texttt{P}^\varphi_{ut}[
g](x)\d u
-\langle \varphi\tilde\varphi  , g\rangle\right|\notag\\
&
=0
\label{spine+sum4}
\end{align}
where  have used Theorem \ref{Ylem} with $F(x, u, t) = g(x) =\gamma(x){\V}[\varphi](x)/\varphi(x) $. 

\medskip

{\it {\bf Block 2:} Decomposing the $k$-th power of the sum in \eqref{reordersum2} assuming the theorem for $k-1$}

\medskip

We now assume that  the theorem holds for all powers less than or equal to $k - 1$ and proceed to show that the result holds true for $k$. 
We can  appeal to the spine decomposition \eqref{reordersum2} and the adaptation of the multinomial formula for expectations of iid random variables in Lemma \ref{iidmulti} in the Appendix to write
\begin{align}
&\frac{1}{t^k}\mathbb{E}^\varphi_{\delta_{x}}\left[\left(\langle\varphi, X_t \rangle - \frac{\varphi(Y_t)}{t}\right)^k \right]\notag\\
&\qquad =
\mathbb{E}^\varphi_{\delta_{x}}\left[\frac{1}{t^k}\left( \sum_{i = 1}^{n_t} \Xi_i(Y_{\textswab{u}^i_t}, t-\textswab{u}^i_t)\right)^k \right]\notag\\
&\qquad=\left(\Sigma/2\right)^{k}\mathbb{E}^\varphi_{\delta_{x}}
\Bigg[
\sum_{j =1}^{k} 2^j {n_t\choose j} \mathbf{1}_{\{j \le n_t\}}
\sum_{[k_1,\cdots,k_{j}]_+}{k \choose {k_1,\cdots, k_{j}}}\prod_{i = 1}^j (k_i-1)!
 &
\notag\\
&\hspace{5cm}
\times \prod_{i=1}^{j}
\frac{
\eta^\varphi_{Y_{\textswab{u}^i_t}}
\left[
\Xi_i(Y_{\textswab{u}^i_t}, t-\textswab{u}^i_t)^{k_i} 
\right]}
{(k_i-1)!\left(\Sigma/2\right)^{k_i-1}\Sigma t^{k_i}}
\Bigg]\notag\\
&\qquad =\left(\Sigma/2\right)^{k}\mathbb{E}^\varphi_{\delta_{x}}
\Bigg[
\sum_{j =1}^{k}  2^j 
\frac{n_t(n_t-1)\cdots(n_t-j+1) \mathbf{1}_{\{j \le n_t\}}}{j!\left(\int_0^t\rho(Y_u)\d u\right)^j}
\sum_{[k_1,\cdots,k_{j}]_+}{k \choose {k_1,\cdots, k_{j}}}\prod_{i = 1}^j (k_i-1)!
\notag\\
&\hspace{4cm}
\times \prod_{i=1}^{j} \frac{1}{t}
\int_0^t \rho(Y_{u})
\frac{
\eta^\varphi_{Y_{u}}
\left[
\Xi_i(Y_{u}, t-u)^{k_i} 
\right]}
{(k_i-1)!\left(\Sigma/2\right)^{k_i-1}\Sigma
t^{k_i-1}}\d u
\Bigg].
\label{massive}
\end{align}
{\color{black}The sums in the second and third equalities above are taken over the set $[k_1,\cdots, k_j]_+$}, which arises from the aforementioned result in the Appendix and consists of all combinations of strictly positive $\{k_1,\cdots k_j\}$ such that $\textstyle \sum_{i= 1}^j k_i= k$. 

\medskip

Suppose now we write $\textstyle I_t = \int_0^t \rho(Y_{u}) \d u$ so that, given $Y$, $n_t$ has a Poisson distribution with rate $I_t$. {In particular, given $Y$, the expectation of the functional $n_t(n_t-1)\cdots (n_t-j+1)\mathbf{1}_{\{j \le n_t\}}$ is given by precisely $I_t^j$.} Hence we can develop the right-hand side of \eqref{massive} and get

\begin{align}
&\frac{1}{t^k}\mathbb{E}^\varphi_{\delta_{x}}\left[\left(\langle\varphi, X_t \rangle - \frac{\varphi(Y_t)}{t} \right)^k \right]\notag\\
&=\left(\Sigma/2\right)^{k}
\sum_{j =1}^{k}  2^j 
\sum_{[k_1,\cdots,k_{j}]_+}\frac{1}{j!}{k \choose {k_1,\cdots, k_{j}}}
\prod_{i = 1}^j (k_i-1)!
\notag\\
&\hspace{2cm}
\times
\mathbf{E}^\varphi_{x}
\Bigg[
\prod_{i=1}^{j} 
\frac{\frac{1}{t}\int_0^t \rho(Y_{u})\,
\eta^\varphi_{Y_{u}}
\left[
\left(
\sum_{\substack{i = 1 \\ i \neq i^*}}^N
\langle\varphi, X^i_{t-u}\rangle 
 \right)^{k_i} 
\right]}{(k_i-1)!\left(\Sigma/2\right)^{k_i-1}\Sigma
t^{k_i-1}}\d u
\Bigg], \label{tohere}
\end{align}
where we recall the notation for $\eta^\varphi_x$ was given in  \eqref{Xi}. In the next two blocks, we develop the $\eta^\varphi_{Y_u}[\cdots]$ terms further and obtain asymptotics for them that allow us to complete the proof in the final block.

\medskip

{\it {\bf Block 3:} Developing the $\eta^\varphi_{Y_u}[\cdots]$ terms in \eqref{tohere} assuming the theorem for $k-1$}

\medskip

We need to control the terms 
\[
\eta^\varphi_{Y_{u}}
\left[
\left(
\sum_{\substack{i = 1 \\ i \neq i^*}}^N
\langle\varphi, X^i_{t-u}\rangle 
 \right)^{k_i} 
\right]
\]
in \eqref{tohere}, noting in particular that none of the $k_i$-s will exceed the value $k$.

\medskip

In a similar spirit to \eqref{36}, we have for any integer $1\leq p\leq k$,
   \begin{align}
&\eta^\varphi_{x}
\left[ 
\left(\sum_{\substack{i = 1 \\ i \neq i^*}}^N
\langle\varphi, X^i_{t- u}\rangle
\right)^p
 \right]\notag\notag\\
 &=\mathcal{E}_{x}\left[\frac{\langle\varphi , \mathcal{Z}\rangle }{\mathcal{E}_{x} [\langle\varphi , \mathcal{Z}\rangle]}\sum_{j = 1}^N\frac{\varphi(x_j)}{\langle\varphi, \mathcal{Z}\rangle} 
 \bigotimes_{i = 1}^{N} \mathbb{E}_{\delta_{x_i}}
 \left[
 \left(
 \sum_{\substack{i = 1 \\ i \neq j}}^N\
 \langle\varphi, X^i_{t- u}\rangle
\right)^p
\right]
\right]\notag\\
&=\frac{1}{\mathcal{E}_{x} [\langle\varphi , \mathcal{Z}\rangle]}
\mathcal{E}_{x}
\left[\sum_{j = 1}^N\varphi(x_j)
\sum_{
\substack{p_j = 0 \\ [p_1,\cdots,p_{N}]} 
}
{p \choose p_1,\cdots, p_{N}}
\prod_{i = 1}^N 
 \mathbb{E}_{\delta_{x_i}}[\langle\varphi, X_{t-u}\rangle^{p_i}]
\right],
\label{36again}
\end{align}
   where in the final equality we have used the multinomial formula in Lemma \ref{ais} of the Appendix; in particular, the sum is over the set $[p_1,\cdots, p_N]$ of all combinations of $(p_i, i= 1,\cdots N)$ such that $\textstyle \sum_{i= 1}^N p_i= p$.

\medskip

We start by noting that there are no more than $p$ strictly positive elements in the product on the right-hand side above since, regardless of the value of $N$, the set $\{1, \dots, p\}$ cannot be partitioned into more than $p$ non-empty blocks. 
 {\color{black} Then, the induction hypothesis (set as valid for $k-1$), the fact that $\textstyle  \sum_{[p_1, \cdots, p_N] } {p \choose p_1,\cdots, p_{N}} = N^p$, the boundedness of $\varphi$ and the assumption \eqref{genmomentcondition} ensures that, uniformly in $x$, the entirety of the right-hand side, and hence the left-hand side, of \eqref{36again} is finite.
 
To show that this implies \eqref{boundedmoments}, we first note that using the same techniques as in the proof of Theorem \ref{Ylem}, one can show that, under the assumptions of this theorem, there exists a constant $K > 0$ such that
\[
\sup_{x \in E}\left| \mathbf{E}^\varphi_{x} \left[\prod_{i =1}^j \int_0^1 F_i(Y_{ut} , u, t)\d u\right] - 
\prod_{i = 1}^j\int_0^1 \langle \varphi\tilde\varphi  , F_i(\cdot, u)\rangle\d u \right| \le K,
\]

Then, feeding \eqref{36again} into the right-hand side of \eqref{tohere}, given the definition of $\rho$ in \eqref{rho}, and the above inequality, we see that \eqref{boundedmoments} holds. 
 }
 \medskip
 
 {\it {\bf Block 4:} Uniform asymptotics of $\eta^\varphi_{Y_u}[\cdots]$ terms in \eqref{tohere}  assuming the theorem for $k-1$}
  \medskip

 It follows from \eqref{36again} that, for $t\ge 1$,
   \begin{align*}
  &\sup_{u\leq t}\Bigg| \frac{\mathcal{E}_{x} [\langle\varphi , \mathcal{Z}\rangle]}{
t^{k-1}}
\eta^\varphi_{x}
\Bigg[ 
\Bigg(\sum_{\substack{i = 1 \\ i \neq i^*}}^N
\langle\varphi, X^i_{t- u}\rangle
\Bigg)^k 
\Bigg]\\
&
\hspace{4cm}-  \frac{1}{
t^{k-1}}
\mathcal{E}_{x}
\Bigg[\sum_{j = 1}^N\varphi(x_j)
\sum_{
\substack{i=1\\  i\neq j} 
}^N
 \mathbb{E}_{\delta_{x_i}}[\langle\varphi, X_{t-u}\rangle^{k}]
\Bigg]\Bigg|\\
&\leq 
\frac{1}{t}
\mathcal{E}_{x}
\left[\sum_{j = 1}^N\varphi(x_j)
\sum_{
\substack{k_j = 0 \\ [k_1,\cdots,k_{N}]_2} 
}
{k \choose k_1,\cdots k_{N}}
\prod_{i = 1}^N 
 \sup_{u \le t}\mathbb{E}_{\delta_{x_i}}[\langle\varphi, X_{s}\rangle^{k_i}]
 /t^{k_i -1}
\right]
\\
&\leq 
\frac{1}{t}
\mathcal{E}_{x}
\left[\sum_{j = 1}^N\varphi(x_j)
\sum_{
\substack{k_j = 0 \\ [k_1,\cdots,k_{N}]_2} 
}
{k \choose k_1,\cdots k_{N}}
\prod_{i = 1}^N 
\left(
\frac{k_i}{k_i-1}\right)^{k_i} \varphi(x_i)
\frac{\mathbb{E}^\varphi_{\delta_{x_i}}[\langle\varphi, X_{t}\rangle^{k_i-1}]
 }{t^{k_i -1}}
\right]
   \end{align*}
   where $[k_1,\cdots,k_N]_2$ are the combinations of non-negative integers $k_1,\cdots,k_N$ such that $\textstyle \sum_{i = 1}^N k_i = k$, with at least two of the $k_i >0$. {\color{black} Using similar arguments to the treatment of the right-hand side of \eqref{36again},   the  assumption \eqref{genmomentcondition}}, the fact that $\Vert\varphi\Vert_\infty<\infty$ and the induction hypothesis at $k-1$ gives us that, uniformly in $x \in E$, the right-hand side tends to zero. That is to say 
   \begin{align}
&\lim_{t\to\infty}\sup_{x \in E}   \sup_{u\leq t}\Bigg| \frac{\mathcal{E}_{x} [\langle\varphi , \mathcal{Z}\rangle]}{
t^{k-1}}
\eta^\varphi_{x}
\Bigg[ 
\Bigg(\sum_{\substack{i = 1 \\ i \neq i^*}}^N
\langle\varphi, X^i_{t- u}\rangle
\Bigg)^k 
\Bigg]\notag\\
&
\hspace{6cm}-  \frac{1}{
t^{k-1}}
\mathcal{E}_{x}
\Bigg[\sum_{j = 1}^N\varphi(x_j)
\sum_{
\substack{i=1\\  i\neq j} 
}^N
 \mathbb{E}_{\delta_{x_i}}[\langle\varphi, X_{t-u}\rangle^{k}]
\Bigg]\Bigg|=0.
\label{mega}
\end{align}

This gives the uniform leading order behaviour of the  term on the left-hand side of \eqref{36again}.

\medskip

{\it {\bf Block 5:} Uniform asymptotic martingale moments  assuming the theorem for $k-1$}

\medskip

If we can show that, for each $j\leq k$,
\begin{align}
\label{B}
&\lim_{t\to\infty}\sup_{x\in E}
\left|
\mathbf{E}^\varphi_{x}
\left[
\prod_{\ell=1}^{j} 
\frac{\frac{1}{t}
\int_0^t  \d u\,\rho(Y_{u})
\eta^\varphi_{Y_u}
\big[
\big(\sum_{\substack{i = 1 \\ i \neq i^*}}^N
\langle\varphi, X^i_{t- u}\rangle \big)^{k_\ell}
\big]}{(k_\ell-1)!\left(\Sigma/2\right)^{k_\ell-1}\Sigma
t^{k_\ell-1}}
\right] - 1
\right| = 0,
\end{align}
then this, Lemma \ref{combinatoriclemma} in the Appendix and \eqref{tohere} yield
\begin{align*}
\lim_{t\to\infty}
\frac{1}{t^k}\mathbb{E}^\varphi_{\delta_{x}}\left[\left(\langle\varphi, X_t \rangle - \frac{\varphi(Y_t)}{t}\right)^k \right]
&=\frac{1}{t^k}\mathbb{E}^\varphi_{\delta_{x}}\left[\langle\varphi, X_t \rangle^k \right]\\
&=\left(\Sigma/2\right)^{k}
\sum_{j =1}^{k}  2^j 
\sum_{[k_1,\cdots,k_{j}]_+}\frac{1}{j!}{k \choose {k_1,\cdots, k_{j}}}
\prod_{i = 1}^j (k_i-1)!\\
&=\left(\Sigma/2\right)^{k}(k+1)!
\end{align*}
uniformly for $x \in E$, which completes the proof of the theorem. Note that we have used the fact that $\varphi(Y_t)/t \to 0$ as $t \to \infty$ in the first equality above.

\medskip

The remainder of the proof is thus focused on showing \eqref{B}. Suppose that $1\leq j\leq k$ and $k_1,\cdots, k_j\in \mathbb{N}$ are such that $\textstyle \sum_{i =1}^j k_i = k$. Then, from \eqref{mega} used in conjunction with \eqref{tohere} we have 
\begin{align}
&\lim_{t\to\infty}\sup_{x\in E}\left|
\mathbf{E}^\varphi_{x}
\left[
\prod_{\ell=1}^{j} \frac{\frac{1}{t}\int_0^t \d u\,\rho(Y_{u})
\eta^\varphi_{x}
\big[
\big(\sum_{\substack{i = 1 \\ i \neq i^*}}^N
\langle\varphi, X^i_{t- u}\rangle \big)^{k_\ell}
\big]}{(k_\ell - 1)! \left(\Sigma/2\right)^{k_\ell-1}\Sigma
t^{k_\ell-1}}
\right] - 1
\right|
\notag\\
&
 =\lim_{t\to\infty}\sup_{x\in E}  \left|
\mathbf{E}^\varphi_{x}
\left[
\prod_{\ell=1}^{j} \frac{
\frac{1}{t}
\int_0^t \d u\,
\frac{\gamma(Y_{u})}{\varphi(Y_{u})} 
\,
\mathcal{E}_{Y_u}
\Bigg[\sum_{j = 1}^N\varphi(Y_j)
\sum_{
\substack{i=1\\  i\neq j} 
}^N
 \mathbb{E}_{\delta_{Y_i}}[\langle\varphi, X_{t-u}\rangle^{k_\ell}]
\Bigg]
}{(k_\ell - 1)! \left(\Sigma/2\right)^{k_\ell-1}\Sigma
t^{k_\ell-1}}
\right] - 1
\right|\notag\\
&
  = \lim_{t\to\infty}\sup_{x\in E} \frac{1}{\Sigma^j}\left|
\mathbf{E}^\varphi_{x}
\left[
\prod_{\ell=1}^{j} 
\int_0^1 \d s\, F_\ell(Y_{st},s,t)
\right] - \Sigma^j
\right|,
\label{uniformtozero}
   \end{align}
where, for $\ell = 1,\cdots, j$, $x \in E$, $0\leq u\leq 1$ and $t\geq 0$,

 \begin{align*}
   F_\ell(x,s,t) &=\frac{\gamma(x)}{\varphi(x)} 
\,
\mathcal{E}_{x}
\Bigg[\sum_{j = 1}^N\varphi(x_j)
\sum_{
\substack{i=1\\  i\neq j} 
}^N\varphi(x_i)
 \frac{ \mathbb{E}^\varphi_{\delta_{x_i}}[\langle\varphi, X_{t(1-s)}\rangle^{k_\ell-1}]
}{(k_\ell - 1)! \left( \Sigma/2\right)^{k_\ell-1}
t^{k_\ell-1}}
\Bigg]\\
&= k_\ell(1-s)^{k_\ell - 1}\frac{\gamma(x)}{\varphi(x)} 
\,
\mathcal{E}_{x}
\Bigg[\sum_{j = 1}^N\varphi(x_j)
\sum_{
\substack{i=1\\  i\neq j} 
}^N\varphi(x_i)
 \frac{ \mathbb{E}^\varphi_{\delta_{x_i}}[\langle\varphi, X_{t(1-s)}\rangle^{k_\ell-1}]
}{k_\ell! \left( \Sigma/2\right)^{k_\ell-1}
(t(1-s))^{k_\ell-1}}
\Bigg].
\end{align*}
   Since we are assuming that the statement of the theorem  holds for $k - 1$, it is easy to verify 
 with the help of \eqref{genmomentcondition} that $F_i\in \mathcal{C}$ (where $\mathcal{C}$ was defined immediately before Theorem \ref{Ylem}) and that 
   \[
F_\ell(x,s): = \lim_{t\to\infty}F_\ell(x,s,t) = k_\ell (1-s)^{k_\ell - 1}\frac{\gamma(x)}{\varphi(x)} {\V}[\varphi](x).
   \]
   Note also that $k_\ell(1-s)^{k_\ell - 1}\Sigma = \langle\varphi\tilde\varphi , F_\ell(\cdot, s)\rangle$ so that $\textstyle \int_0^1 \langle\varphi\tilde\varphi , F_\ell(\cdot, s)\rangle \d s = \Sigma$.
   As a consequence, Theorem \ref{Ylem} tells us that the right-hand side of \eqref{uniformtozero} converges to zero, as required.
\end{proof}

\section{Step 2: Proof of Theorem \ref{lineardecaygen}}\label{Sectstep2}

In addition to (G1) and (G2) assumed in the previous section, we further assume (G3), (G4) and (G5). Note that (G5) implies that the previously enforced moment condition \eqref{genmomentcondition} holds.

\smallskip

For bounded functions $g: E\to [0, 1]$, let us define the non-linear semigroup
\[
u_{t}[g](x)=\mathbb E_{\delta_{x}}\left[1-\prod_{i=1}^{N_{t}}g(x_{i} (t))\right], \quad x \in E, 
\]
with the convention that if $\zeta\leq t$ then the above product is equal to $1$. We will also use the notation
\begin{equation}
u_{t}(x):=u_{t}[\mathbf 0](x)=\mathbb P_{\delta_{x}}(\zeta>t) \ \text{ and }\  a(t):=\langle\tilde\varphi, u_{t}\rangle,
\label{u0at}
\end{equation}
where $\mathbf 0$ is the zero function.

\begin{prop}
\label{prop: diff-eq}
For all bounded and measurable functions $g\in L^+_\infty(E)$ such that $\Vert g\Vert_\infty\leq 1$ and $x \in E$, $t\geq 0$, $u_{t}[g]$ satisfies
\begin{equation}
\label{eq: diff-eq}
u_{t}[g](x)=\psi_{t}[1-g](x)+\int_{0}^{t}\psi_{s}\big[\G[u_{t-s}[g]]\big](x)\d s,
\end{equation}
where the operator $\G$ is given by 
\begin{equation}
\label{def: scrG}
\G[h](x):=\gamma(x)\mathcal E_{x}\left[1-\prod_{i=1}^{N}\big(1-h(x_{i})\big)-
\sum_{i=1}^N h(x_i) \right] 
\end{equation}
for $h\in L^+_{\infty, 1}(E)$.
It follows that $a(t)$,  defined in \eqref{u0at}, satisfies
\begin{equation}
\label{eq: diff-eq'}
a(t)=a(0)+\int_{0}^{t} \langle \tilde\varphi, \G[u_{t-s}]\rangle \d s=a(0)+\int_{0}^{t} \langle \tilde\varphi, \G[u_{s}]\rangle \d s,\qquad t\geq 0. 
\end{equation}
\end{prop}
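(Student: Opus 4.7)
The plan is to first derive \eqref{eq: diff-eq} by conditioning on the time of the first fission of the MBP and then applying a Duhamel manipulation that converts the naturally-arising $\bP$-integral equation into one driven by the mean semigroup $\psi$. The identity \eqref{eq: diff-eq'} will then follow by pairing with $\tilde\varphi$ and using the left-eigenmeasure property from (G2).

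For the first step, let $T$ denote the time of the first fission of the initial particle started at $x\in E$. On $\{T>t\}$ a single particle is alive at time $t$, whose motion is governed by $\bP$ and which contributes $1-g(Y_t)$; on $\{T\le t\}$, the branching Markov property applied at $T$, together with the definition of $u_{t-T}[g]$ applied to each of the $N$ independent subtrees, yields the factor $1-\prod_{i=1}^N(1-u_{t-T}[g](x_i))$. Taking expectations and using a standard Feynman--Kac identity to fold the killing at rate $\gamma$ along the root particle's trajectory into a source term (in the spirit of the derivation of \eqref{lineareq}, cf.\ Lemma 1.2, Chapter 4 of \cite{Dynkin2}), one obtains
\[
u_t[g](x) = \bP_t[1-g](x) + \int_0^t \bP_s\!\left[H[u_{t-s}[g]]\right](x)\,\d s,
\]
where $H[h](x) := \gamma(x)\bigl(\mathcal{E}_x[1-\prod_{i=1}^N(1-h(x_i))] - h(x)\bigr)$.

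Now note the algebraic decomposition $H[h] = \mathscr{A}[h] + \G[h]$, which isolates the linear contribution coming from the first moment of the offspring from the genuinely non-linear correction $\G[h]$ of \eqref{def: scrG}. A Duhamel argument absorbs the $\mathscr{A}$-term into the mean semigroup: let $\tilde u_t := \psi_t[1-g] + \int_0^t \psi_s[\G[u_{t-s}[g]]]\,\d s$. Applying Proposition \ref{prop:linear} to both summands and interchanging integrals by Fubini, one verifies that $\tilde u_t$ satisfies the same $\bP$-integral equation as $u_t[g]$ (viewed as an affine equation in $\tilde u$ with source $\G[u_{t-s}[g]]$). Since this linear equation is driven by bounded coefficients (by (G1)) and a bounded source ($u_t[g]\in[0,1]$), uniqueness of its bounded solution --- by a standard Gr\"onwall-type argument --- forces $\tilde u_t = u_t[g]$, which is \eqref{eq: diff-eq}.

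Finally, for \eqref{eq: diff-eq'} I would set $g\equiv\mathbf 0$ in \eqref{eq: diff-eq}, so that $u_t[g]=u_t$, and pair the resulting identity with $\tilde\varphi$. The left-eigenmeasure property $\langle\tilde\varphi,\psi_t[f]\rangle=\langle\tilde\varphi,f\rangle$ from (G2) collapses both $\psi$-terms to their integrands against $\tilde\varphi$, and noting that $u_0\equiv 1$ so that $a(0)=\langle\tilde\varphi,\mathbf 1\rangle$, this yields the first form of \eqref{eq: diff-eq'}; the second form follows by the substitution $s\mapsto t-s$ in the convolution integral. The principal obstacle in the whole argument is the Duhamel step: it is conceptually routine, but one must be careful with iterating Proposition \ref{prop:linear}, interchanging the order of integration, and with the Gr\"onwall uniqueness argument. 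Everything else amounts to a careful unpacking of the first-fission decomposition.
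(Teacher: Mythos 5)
Your proposal is correct and follows the paper's strategy in all essentials: both arguments condition on the first branching event to obtain an intermediate $\bP$-driven integral equation in which the branching term is split as $\mathscr{A}[h]+\G[h]$ (your $H[h]=\mathscr{A}[h]+\G[h]$ is precisely the paper's \eqref{eq: interm}), both then pass to the $\psi$-driven form \eqref{eq: diff-eq}, and the derivation of \eqref{eq: diff-eq'} from the left-invariance of $\tilde\varphi$ is identical. The only divergence is the mechanism by which the linear part $\mathscr{A}$ is absorbed into the mean semigroup. The paper introduces the auxiliary equation \eqref{eq: interm'} driven by the many-to-one process $Y$ with the Feynman--Kac weight ${\rm e}^{\int_0^t B(Y_s)\d s}$, shows by conditioning on the first jump of $Y$ (and Dynkin's lemma) that its solutions coincide with those of the $\bP$-equation, and then reads off \eqref{eq: diff-eq} from the many-to-one formula of Lemma \ref{lem:M21}. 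You instead take the candidate $\tilde u_t=\psi_t[1-g]+\int_0^t\psi_s\big[\G[u_{t-s}[g]]\big]\,\d s$, verify via Proposition \ref{prop:linear} and Fubini that it solves the same affine $\bP$-equation (with the source frozen at $\G[u_{t-s}[g]]$) as $u_t[g]$, and conclude by a Gr\"onwall uniqueness argument. Both mechanisms are standard and valid; yours avoids the detour through $Y$ at the price of having to justify the uniqueness step, which is unproblematic here since (G1) makes $\mathscr{A}$ a bounded operator and $0\le u_t[g]\le 1$.
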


\begin{proof}
{\color{black} Again, the proof uses the usual method of splitting on the first branching event, see for example \cite{MultiNTE, SNTE, SNTEII}, so we will only sketch the arguments here. Recall that $(\bP_t, t\geq0)$ is the underlying motion and 
that
the branching operator $\mathscr{A}$ given in \eqref{linearG}. 
By conditioning on the first branching event, and then applying Lemma 1.2, Chapter 4 in \cite{Dynkin1}, we find that, for $x \in E, t\geq0$ and $g\in L^+_{\infty, 1}(E)$,
\begin{equation}\label{eq: interm}
u_{t}[g](x)=\bP_{t}[1-g](x) +\int_{0}^{t} \bP_{s}\big[\mathscr{A}[u_{t-s}[g]]\big](x)\d s+\int_{0}^{t} \bP_{s}\big[\G[u_{t-s}[g]\big](x)\d s. 
\end{equation}
Next, recall the process $(Y_{t}, t\ge 0)$ under $\mathbf P_{x}$ defined just before Lemma \ref{lem:M21}. 
Define $(\tilde u_t, t\geq 0)$ via 
\begin{align}\label{eq: interm'}
\tilde u_{t}[g](x)=\,\mathbf E_{x}\Big[{\rm e}^{\int_{0}^{t}B(Y_{s})\d s}\big(1-g(Y_{t})\big)
\Big] +\mathbf E_{x}\Big[\int_{0}^{t }{\rm e}^{\int_{0}^{s}B(Y_{u})\d u}\G\big[\tilde u_{t-s}[g]\big](Y_{s})\Big],
\end{align}
for $x \in E, t\geq 0$ and $g\in L^{+}_{\infty, 1}(E)$.
Note that for the moment we don't claim a solution to \eqref{eq: interm'} exists. By conditioning the right-hand side of \eqref{eq: interm'} on the first jump of $Y$ and invoking Lemma 1.2, Chapter 4 of \cite{Dynkin1} (bearing in mind the dynamics of $Y$ given just before Lemma \ref{lem:M21})
we can check that $(\tilde u_{t}, t\geq 0)$ is a solution to  \eqref{eq: interm'} if and only if it solves \eqref{eq: interm}. In particular,  $(u_{t}, t\geq 0)$ satisfies \eqref{eq: interm'}. Combined with the many-to-one formula, this implies that
\begin{equation}
u_{t}[g](x)=\psi_{t}[1-g](x)+\int_{0}^{t}\psi_{s}\big[\G[u_{t-s}[g]]\big](x)\d s, 
\label{useg0}
\end{equation}
which is \eqref{eq: diff-eq}. As $\tilde\varphi$ is a left eigenfunction for the semigroup $(\psi_t, t\geq0)$ and as we are dealing with the critical case,  we have, for all $t\geq0$, that  $\langle\tilde\varphi, \psi_t[g]\rangle = \langle \tilde\varphi,g \rangle$. Using this fact and  taking $g=\mathbf 0$ in \eqref{useg0}, we find that
\[
a(t)=\langle \tilde\varphi, \psi_{t}[\mathbf 1]\rangle+\int_{0}^{t} \langle \tilde \varphi, \psi_{s}\big[\G[u_{t-s}]\big]\rangle\d s
=\langle \tilde \varphi, \mathbf 1\rangle + \int_{0}^{t} \langle \tilde \varphi, \G[u_{t-s}]\rangle\d s, 
\]
where $\mathbf{1}$ is the function which is identically unity. The above equality gives rise to the first identity in \eqref{eq: diff-eq'}. The second identity in \eqref{eq: diff-eq'} then follows by a change of variable. }
\end{proof}

Our aim is to extract the proof of Theorem \ref{lineardecaygen} from \eqref{eq: diff-eq} and \eqref{eq: diff-eq'}. To this end, we need a series of technical lemmas. The first lemma regards the analytical behaviour of the ${\V}[h](x)$ defined in \eqref{2point} and $\G[h](x)$ defined in \eqref{def: scrG}.

\begin{lemma}
\label{lem: prop-G}
The following statements hold. 
\begin{enumerate}[(i)]
\item
For all $x \in E$ and $h: E\to [0, 1]$, we have $0\ge \G[h](x)\ge -\norm{\gamma}_{\infty}M$, where $M  = \sup_{x \in E}m(x)$. 
\item
There exists $C\in (0, \infty)$ such that, for all $h: E\to [0, \tfrac12]$, 
\begin{equation}
\label{eq: bd-gg}
\sup_{x \in E}\Big|\G[h](x)+\tfrac12\gamma(x){\V}[h](x)\Big|\le C \sup_{x \in E}h(x)^{3}.
\end{equation}
\item
{\color{black}There exists $C\in (0, \infty)$ such that for all $h: E\to [0, \tfrac12]$, 
\[
-\G[h](x)\ge C\gamma(x) {\V}[h](x) \ge 0, 
\qquad x \in E.
\]}
\item
There exists $C\in (0, \infty)$ such that for all $h_{1}, h_{2}: E\to [0, 1]$, we have
\[
\sup_{x \in E}\Big|{\V}[h_{1}](x) -{\V}[h_{2}](x)\Big| \le C\sup_{x \in E}|h_{1}(x)-h_{2}(x)|. 
\]
\end{enumerate}
\end{lemma}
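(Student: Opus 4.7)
The plan is to prove the four parts by direct expansion of the offspring expectation defining $\G$, combined with elementary symmetric polynomial algebra and the offspring bound (G4) where uniformity in $N$ is needed. Part (i) only requires (G1): the classical inequality $\prod_i(1-h(x_i)) \ge 1 - \sum_i h(x_i)$ for $h(x_i)\in[0,1]$ (a straightforward induction on $N$, or the union-bound interpretation) gives $\G[h]\le 0$; conversely, discarding the nonnegative quantity $1-\prod_i(1-h(x_i))$ yields $\G[h](x)\ge -\gamma(x)\mathcal{E}_x[\sum_i h(x_i)]\ge -\norm{\gamma}_\infty M$ using $h\le 1$ and the uniform bound on $m$ from (G1).

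Parts (ii) and (iii) are handled by a single device: fix a realisation $\{x_1,\ldots,x_N\}$, set $a_i = h(x_i)$, and consider $g(t) := \prod_i(1-ta_i)$ on $[0,1]$. Taylor's theorem with integral remainder to second order gives
\[
g(1) + \sum_i a_i - 1 = \int_0^1(1-s)\,g''(s)\,\d s,
\]
where $g''(s) = \sum_{k\ne\ell}a_k a_\ell \prod_{j\ne k,\ell}(1-sa_j) \ge 0$. For (iii), the pointwise bound $(1-sa_j)\ge 1/2$ (valid since $h\le 1/2$) together with (G4) yields $g''(s)\ge 2^{-(N_{\max}-2)}\sum_{k\ne\ell}a_ka_\ell$; integrating and then taking $\mathcal{E}_x$ delivers $-\G[h](x)\ge C\gamma(x)\V[h](x)$ with $C = 2^{-(N_{\max}-1)}$. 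For (ii), expanding to third order gives
\[
g(1) + \sum_i a_i - 1 - \tfrac12 \sum_{k\ne\ell}a_ka_\ell = \tfrac12 \int_0^1 (1-s)^2 g'''(s)\,\d s,
\]
and the uniform estimate $|g'''(s)|\le N_{\max}^3\,\norm{h}_\infty^3$, which follows from (G4) and $|1-sa_j|\le 1$, gives $|\G[h](x) + \tfrac12\gamma(x)\V[h](x)| \le C\norm{h}_\infty^3$ uniformly in $x$.

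For (iv), bilinearity of $\V$ gives
\[
\V[h_1](x) - \V[h_2](x) = \mathcal{E}_x\biggl[\sum_{i\ne j}\bigl((h_1-h_2)(x_i)h_1(x_j) + h_2(x_i)(h_1-h_2)(x_j)\bigr)\biggr],
\]
and bounding $\norm{h_1}_\infty,\norm{h_2}_\infty\le 1$ together with the deterministic bound $N(N-1)\le N_{\max}(N_{\max}-1)$ from (G4) yields the Lipschitz estimate with constant $2N_{\max}(N_{\max}-1)$.

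The main obstacle is (iii): its lower bound must hold uniformly over the whole range $h\le 1/2$, not merely in the small-$h$ regime where the second-order expansion implicit in (ii) would be the dominant contribution. A naive use of (ii) would place the quadratic main term and the cubic error term on the same footing near the upper end of $[0,1/2]$ and fail to give a sign. Retaining the exact Taylor integral remainder (rather than its asymptotic expansion) preserves the sign of $g''$ and is what makes the argument work, and (G4) is precisely what converts the otherwise $N$-dependent factor $(1/2)^{N-2}$ into a uniform positive constant.
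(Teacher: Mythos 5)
Your proof is correct. Parts (i) and (iv) coincide with the paper's argument (the same product inequality for (i), bilinearity plus the deterministic bound $N(N-1)\le N_{\max}(N_{\max}-1)$ for (iv)). For (ii) and (iii) you take a genuinely different route: the paper proves (ii) by writing $\prod_i(1-h(x_i))={\rm e}^{-L(h)}$ with $L(h)=-\langle \log(1-h),\mathcal Z\rangle$ and combining three scalar Taylor estimates (for $1-{\rm e}^{-x}-x+\tfrac12x^2$, for $-\log(1-x)-x-\tfrac12x^2$, and for the cross term $\langle\log(1-h),\mathcal Z\rangle^2-\langle h,\mathcal Z\rangle^2$), and for (iii) it merely states the polynomial inequality $\prod_i(1-x_i)-1+\sum_i x_i\ge 2^{-(n-1)}\sum_{i\ne j}x_ix_j$ and omits the induction. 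Your single device --- Taylor's theorem with integral remainder applied to $g(t)=\prod_i(1-ta_i)$ --- handles both parts at once: the second-order remainder with the sign and lower bound on $g''$ recovers (and actually proves, with the same constant $2^{-(N_{\max}-1)}$) the inequality the paper leaves unproved, while the third-order remainder with $|g'''(s)|\le N_{\max}^3\norm h_\infty^3$ gives (ii) without the $\log$/$\exp$ bookkeeping, and indeed works on the whole range $h\le 1$ rather than $h\le\tfrac12$. The paper's exp/log decomposition is more modular (each scalar inequality is standard), but your argument is more self-contained and fills the gap the paper elides; your diagnosis that keeping the exact remainder, rather than an asymptotic expansion, is what preserves the sign in (iii) is exactly right.
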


\begin{proof} (i)
The non positivity of $\G[h]$ can be demonstrated using an inductive argument. For $n\ge 1$ and $(x_{i}, 1\le i\le n) \in [0, 1]^{n}$,
\[
f_{n}(x_{1}, \dots, x_{n}):=1-\prod_{i=1}^{n}(1-x_{i})-\sum_{i=1}^{n}x_{i}\le 0.  
\]
To see why this is true, we note that $f_{1}(x_{1})=0$ and more generally, we have $f_{n+1}(x_{1}, \dots, x_{n}, 0)=f_{n}(x_{1}, \dots, x_{n})$ as well as $\partial_{n+1}f_{n+1}(x_{1}, \dots, x_{n+1})\le 0$. This yields $\G[h]\leq 0$.

\medskip

For the second inequality in (i), it suffices to observe that since $0\le h\le 1$
\[
 \mathcal{E}_x\left[1-\prod_{i}(1-h(x_{i}))-\langle h, \mathcal Z\rangle\right]\ge -\mathcal{E}_x\left[\langle h, \mathcal Z\rangle \right]\ge -\sup_{x \in E}m(x),
\]
which is bounded due to (G1).

\medskip

(ii) Let us write $L(h)=-\langle \log(1-h), \mathcal Z\rangle$, which is non negative. Then, we have
\begin{align}
\Big|\G[h](x)+\tfrac12\gamma(x){\V}[h](x)\Big|&\le \gamma(x)\mathcal E_{x}\big[1-{\rm e}^{-L(h)}-L(h)+\tfrac12 L(h)^{2}\big]\notag\\
&\hspace{0.5cm}+\gamma(x)\mathcal E_{x}\big[\langle -\log(1-h)-h-\tfrac12 h^{2}, \mathcal Z\rangle\big]\notag\\
&\hspace{1cm}+ \tfrac12\gamma(x)\mathcal E_{x}\big[\big|\langle \log(1-h), \mathcal Z\rangle^{2}-\langle h, \mathcal Z\rangle ^{2}\big|\big].
\label{3terms}
\end{align}
Applying the elementary bounds that $0\le 1-{\rm e}^{-x}-x+\tfrac12x^{2}\le \tfrac16 x^{3}$ for all $x\ge 0$ and that $0\le -\log(1-x)-x-\tfrac12x^{2}\le x^{3}$ for $x\in [0, \tfrac12]$, since both $\gamma$ and the number of offspring produced at a branching event are uniformly bounded thanks to (G1) and (G4), respectively, we see the first two terms on the right-hand side of \eqref{3terms} are bounded by $C\norm{h}_{\infty}^{3}$. For the third term on the right-hand side of \eqref{3terms}, writing $\log(1-h)=-h+R(h)$ with $|R(h)|\le C\|h\|^{2}_{\infty}$, we get
\[
\big|\langle \log(1-h), \mathcal Z\rangle^{2}-\langle h, \mathcal Z\rangle ^{2}\big|
\le 2\langle h, \mathcal Z\rangle\cdot\langle |R(h)|, \mathcal Z\rangle +\langle R(h), \mathcal Z\rangle^{2}
\le C \,\norm{h}_{\infty}^{3}, 
\]
which leads to (ii). 
\medskip

(iii) First note that for all $n\ge 2$ and $(x_{i},1\le i\le n)\in[0, \tfrac12]^{n}$, 
\[
\prod_{i=1}^{n}(1-x_{i})-1+\sum_{i=1}^{n}x_{i}\ \ge \ \frac{1}{2^{n-1}}\sum_{1\le i, j\le n: i\ne j}x_{i}x_{j}. 
\]
This can be shown with an induction on $n$ and its proof is therefore omitted. The result then follows due to (G4).

\medskip

(iv) The final claim can be checked from the definition of ${\V}$, combined with (G4).
\end{proof}

\medskip

We now prove a sequence of results that show that $a(t)$ and $u_t(x)$ can be bounded above and below by a constant divided by $t$. We can then use this to show that the difference between $u_t(x)/\varphi$ and $a(t)$ decays sufficiently quickly as $t \to \infty$, which we then, in turn, use to obtain the precise constant given in Theorem \ref{lineardecaygen}. The proof is inspired by \S5 of \cite{MWY}, where Theorem~\ref{lineardecaygen} was proved for the NBP in the isotropic case with homogeneous branching. In this case, equation \eqref{u0at} and the operator $\V$ simplify due to the lack of correlation between particles produced at branching events. This is not possible in the general case, as particles may be correlated. 

\medskip

We start with the lower bound.

\begin{lemma}\label{lem: a-lowerbd}
There exists $C \in (0, \infty)$ such that 
\[
\mathbb P_{\delta_{x}}(\zeta>t) \ge \frac{\varphi(x)}{Ct} \qquad \text{ and } \qquad a(t)\ge \frac{1}{Ct}
\]
for all $t\ge 1$. 
\end{lemma}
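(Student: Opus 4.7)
The strategy is to extract a differential inequality for $a(t)$ from \eqref{eq: diff-eq'} and integrate. Since $a'(t) = \langle\tilde\varphi, \G[u_{t}]\rangle \le 0$ by Lemma~\ref{lem: prop-G}(i), $a$ is non-increasing, and $u_{t}(x)\to 0$ as $t\to\infty$ by (G3). For $t$ large enough that $\|u_{t}\|_{\infty}\le 1/2$, Lemma~\ref{lem: prop-G}(ii)--(iii) combined with (G5) sandwich
\[
C_{0}\,a(t)^{2} \;\le\; -a'(t) \;\le\; \tfrac{1}{2}\langle\tilde\varphi, \gamma\V[u_{t}]\rangle + C\|u_{t}\|_{\infty}^{3}\langle\tilde\varphi,\mathbf{1}\rangle.
\]
The lower bound integrates via $(1/a)'\ge C_{0}$ to the coarse upper bound $a(t)\le 2/(C_{0}t)$ for $t\ge 1$. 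The target is then a matching upper bound $-a'(t)\le K a(t)^{2} + o(a(t)^{2})$, which by $(1/a)'(t)\le K + o(1)$ will integrate to the required $a(t) \ge 1/(Ct)$.

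The main obstacle is expressing $\V[u_{t}]$ in terms of $a(t)^{2}$. Since $\V[u_{t}](x) \le \|u_{t}/\varphi\|_{\infty}^{2}\V[\varphi](x)$ and $\langle\tilde\varphi, \gamma\V[\varphi]\rangle = \Sigma$, it suffices to prove the refined pointwise estimate $\|u_{t}/\varphi - a(t)\|_{\infty} \le C/t^{2}$. I would establish this in two steps. First, by the branching Markov property at time $s$ combined with $1-\prod_{i}(1-a_{i})\le \sum_{i}a_{i}$, one has $u_{t}(x) \le \psi_{s}[u_{t-s}](x)$, and (G2) then yields $u_{t}(x)/\varphi(x) \le a(t-s) + Ce^{-\varepsilon s}\|u_{t-s}\|_{\infty}$. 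Iterating this contraction for a fixed $s=s_{0}$ chosen so that $Ce^{-\varepsilon s_{0}}\|\varphi\|_{\infty}\le 1/2$, using the coarse bound $a(r)\le C/r$ to control the resulting geometric series in $a(t-ks_{0})$, produces $\|u_{t}/\varphi\|_{\infty}\le C/t$. Second, subtracting $\varphi(x)a(t)$ from both sides of \eqref{eq: diff-eq} (with $g=\mathbf{0}$), using $\|\G[u_{r}]\|_{\infty}\le C'\|u_{r}\|_{\infty}^{2}\le C''/r^{2}$ (via Lemma~\ref{lem: prop-G}(ii), (G4) and the previous step), and applying (G2) to bound $\psi_{s}[f](x) - \varphi(x)\langle\tilde\varphi, f\rangle$ by $C\varphi(x)e^{-\varepsilon s}\|f\|_{\infty}$, the problem reduces to estimating $\int_{0}^{t}e^{-\varepsilon(t-r)}r^{-2}\d r$, which is $O(1/t^{2})$ by splitting over $[1,t/2]$ and $[t/2,t]$.

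Plugging the refined estimate into the upper bound on $-a'(t)$ yields
\[
-a'(t) \;\le\; \tfrac{\Sigma}{2}\big(a(t) + C/t^{2}\big)^{2} + C'\big(a(t)^{3} + 1/t^{6}\big) \;\le\; K_{1}a(t)^{2} + K_{2}a(t)/t^{2} + K_{3}/t^{4}.
\]
A bootstrap on the constant $c$ in the provisional bound $a(t)\ge c/t$, seeded by positivity and monotonicity of $a$ on any compact interval $[t_{0},2t_{0}]$ (which forces $a(t)\ge a(2t_{0})\ge t_{0}a(2t_{0})/t$), then propagates forward through $(1/a)'(t)\le K_{1} + K_{2}/(tc) + K_{3}/(t^{2}c^{2})$ and integration to give $a(t)\ge 1/(Ct)$ for all $t \ge 1$. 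Finally, the pointwise lower bound $u_{t}(x)\ge \varphi(x)/(Ct)$ follows immediately by combining $u_{t}(x)/\varphi(x) \ge a(t) - C/t^{2}$ with $a(t) \ge 1/(C't)$; as an alternative independent of the refined comparison, one may instead apply the Bonferroni-type inequality $1-\prod_{i}(1-a_{i})\ge \sum_{i}a_{i} - \tfrac{1}{2}(\sum_{i}a_{i})^{2}$ at time $t/2$, bounding the linear term below by $\varphi(x)a(t/2)(1-o(1))$ via (G2) and the quadratic correction above by $O(\varphi(x)/t^{2})$ via Lemma~\ref{lem:M22}.
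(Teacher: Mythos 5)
Your coarse upper bounds and the refined comparison $\sup_x|u_t(x)/\varphi(x)-a(t)|=O(t^{-2})$ are fine (they reproduce Lemmas \ref{lem: unif-small} and \ref{lem: iter}), but the final bootstrap does not close, and this is where the lower bound actually lives. Your differential inequality is $-a'(t)\le K_1a(t)^2+K_2a(t)t^{-2}+K_3t^{-4}$, and the additive error terms are dominated by $a(t)^2$ only on the set where $a(t)\ge c/t^{2}$ for a \emph{uniform} $c$ --- which is essentially the statement you are trying to prove. The seed you propose ($a(t)\ge t_0a(2t_0)/t$ on $[t_0,2t_0]$) gives a constant depending on $t_0$ and does not propagate: if $T$ is the first time $a(s)\ge c/s$ fails, integrating $(1/a)'\le K_1+K_2/(cs)+K_3/(c^2s^2)$ up to $T$ only yields $T(1-cK_1)\le c/a(t_0)+K_3/(ct_0)+K_2\log(T/t_0)$, and as $c\downarrow 0$ the admissible $T$ grows like $K_3/(ct_0)$, so no single $c>0$ is ever forced to survive for all time. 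Concretely, the inequality $-a'\le K_1a^2+K_3t^{-4}$ together with $a\le C/t$ and $a(t_0)>0$ is consistent with $a(t^*)\asymp (t^*)^{-2}$ at arbitrarily large $t^*$; the information that rules this out is not in your differential inequality. Your fallback for the pointwise bound also fails: in the Bonferroni estimate at time $t/2$ the quadratic correction is $\tfrac12\mathbb{E}_{\delta_x}[\langle u_{t/2},X_{t/2}\rangle^2]$, and by Lemma \ref{lem:M22} the integral term contributes $\int_0^{t/2}\psi_s[\gamma\V[\psi_{t/2-s}[u_{t/2}]]](x)\,\d s\asymp \|u_{t/2}/\varphi\|_\infty^2\cdot\varphi(x)\Sigma t/2\asymp\varphi(x)/t$, not $O(\varphi(x)/t^2)$. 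So the correction is of the same order as the linear term $\varphi(x)a(t/2)$ and the lower bound it produces is vacuous --- the familiar fact that for critical processes first and second moments give a survival lower bound only through Paley--Zygmund, not through inclusion--exclusion.

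The paper's proof is a one-liner that sidesteps all of this: under the change of measure \eqref{genmgCOM} the process survives, so $\mathbb{P}_{\delta_x}(\zeta>t)\ge\mathbb{P}_{\delta_x}(\langle\varphi,X_t\rangle>0)=\mathbb{E}^\varphi_{\delta_x}\bigl[\varphi(x)/\langle\varphi,X_t\rangle\bigr]\ge\varphi(x)/\mathbb{E}^\varphi_{\delta_x}[\langle\varphi,X_t\rangle]\ge\varphi(x)/(Ct)$ by Jensen and the $k=1$ case of Theorem \ref{genmoments}; integrating against $\tilde\varphi$ gives $a(t)\ge 1/(Ct)$. Equivalently, in your second-moment language this is exactly Cauchy--Schwarz, $\mathbb{E}_{\delta_x}[\langle\varphi,X_t\rangle]^2\le\mathbb{E}_{\delta_x}[\langle\varphi,X_t\rangle^2]\,\mathbb{P}_{\delta_x}(\langle\varphi,X_t\rangle>0)$, with $\mathbb{E}_{\delta_x}[\langle\varphi,X_t\rangle]=\varphi(x)$ and $\mathbb{E}_{\delta_x}[\langle\varphi,X_t\rangle^2]\le C\varphi(x)t$ --- replace Bonferroni by Paley--Zygmund and your moment inputs already suffice, with no differential inequality and no bootstrap needed. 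Note also that the paper cannot afford your route for a structural reason: the proof of Theorem \ref{lineardecaygen} uses this lemma precisely to absorb the $O(s^{-3})$ errors into $a(s)^2o(1)$, so the lower bound must come from somewhere other than that error analysis.
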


\begin{proof}
Let us recall the change of measure \eqref{genmgCOM} and that, from Theorem \ref{genmoments},  there exists a $C>0$ such that 
\[
\sup_{x\in E}\mathbb E^{\varphi}_{\delta_{x}}[\langle \varphi, X_{t}\rangle]\le Ct
\]
 for all $t\ge 1$. By Jensen's inequality, we then get
\begin{equation}
\mathbb P_{\delta_{x}}(\zeta>t)=\mathbb E^{\varphi}_{\delta_{x}}\left[\frac{\varphi(x)}{\langle \varphi, X_{t}\rangle}\right]\ge \frac{\varphi(x)}{\mathbb E^{\varphi}_{\delta_{x}}[\langle\varphi, X_{t}\rangle]}\ge \frac{\varphi(x)}{Ct}, \quad t\ge 1. 
\label{typesofh}
\end{equation}
The lower bound for $a(t)$ then follows from an integration with $\tilde\varphi$, recalling that we have normalised the left and right eigenfunctions so that $\langle\tilde{\varphi}, \varphi\rangle =1$.
\end{proof}

Before turning to the upper bound, we first state and prove a lemma regarding the extinction time of the process.

\begin{lemma}\label{lem: unif-small} 
Under the assumptions of Theorem \ref{lineardecaygen}, there exists a constant $C >0 $ such that for all $t\ge t_{0}$, 
\begin{equation}
   a(t) \le \frac{C}{t}\,\text{ and } \,\sup_{x \in E}u_{t}(x) \le \frac{C}{t}.
   \label{eq: a-upperbd}
\end{equation}
\end{lemma}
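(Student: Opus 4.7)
The plan is to combine the integral equation \eqref{eq: diff-eq'} with Lemma \ref{lem: prop-G}(iii) and assumption (G5) to derive a Riccati-type differential inequality for $a(t)$, from which the $1/t$ decay follows by standard ODE comparison. The pointwise bound is then bootstrapped from the bound on $a(t)$ via the Markov-type inequality $u_{t+s}(x)\le \psi_t[u_s](x)$ together with the spectral gap in (G2).

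As a preliminary step, I would first show $\sup_{x\in E} u_t(x)\to 0$, which is needed before Lemma \ref{lem: prop-G}(iii) can be invoked since part (iii) only applies to $h\le 1/2$. From (G3), $u_t(x)\to 0$ pointwise, and dominated convergence (with the finite measure $\tilde\varphi$ and $u_t\le 1$) gives $a(t)=\langle \tilde\varphi, u_t\rangle\to 0$. The branching Markov property yields
\[
u_{t+s}(x) = \mathbb E_{\delta_x}\Big[1 - \prod_i (1-u_s(x_i(t)))\Big] \le \mathbb E_{\delta_x}[\langle u_s, X_t\rangle] = \psi_t[u_s](x),
\]
so by (G2),
\[
u_{t+s}(x) \le \varphi(x)\big(a(s)+C{\rm e}^{-\varepsilon t}\big),
\]
which can be made smaller than $1/2$ uniformly in $x$ once $s$ and $t$ are sufficiently large. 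Hence there exists $t_1$ with $\sup_x u_t(x)\le 1/2$ for all $t\ge t_1$.

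For $t\ge t_1$, Lemma \ref{lem: prop-G}(iii) gives $-\G[u_t](x)\ge C\gamma(x){\V}[u_t](x)$ pointwise; integrating against $\tilde\varphi$ and applying (G5) yields $-\langle \tilde\varphi, \G[u_t]\rangle\ge C' a(t)^2$. Since $\G[u_s]$ is uniformly bounded by Lemma \ref{lem: prop-G}(i), the representation \eqref{eq: diff-eq'} shows $a$ is Lipschitz, hence absolutely continuous with $a'(t)=\langle \tilde\varphi, \G[u_t]\rangle$ for a.e. $t$. Thus $a'(t)\le -C' a(t)^2$ for a.e. $t\ge t_1$, which rewritten as $(1/a)'\ge C'$ and integrated gives $a(t)\le 1/(C'(t-t_1))$ for $t>t_1$, so $a(t)\le \tilde C/t$ for all $t\ge 2t_1$.

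For the pointwise upper bound, applying $u_{2t}(x)\le \psi_t[u_t](x)$ with $\|u_t\|_\infty\le 1$ together with (G2) yields $u_{2t}(x)\le \varphi(x)(a(t)+C{\rm e}^{-\varepsilon t})$; since $\varphi\in L_\infty^+(E)$, $a(t)\le \tilde C/t$ and ${\rm e}^{-\varepsilon t}=o(1/t)$, a reparameterization $s=2t$ yields $\sup_x u_s(x)\le C/s$ for all $s\ge s_0$, completing the proof. The main obstacle is the preliminary uniform-smallness step, since Lemma \ref{lem: prop-G}(iii) demands $u_t\le 1/2$ and pointwise convergence from (G3) is not enough on its own; once that is in place, the Riccati inequality and bootstrap are essentially mechanical.
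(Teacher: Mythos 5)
Your proposal is correct and follows essentially the same route as the paper: establish uniform smallness of $u_t$ via (G3), dominated convergence, the branching Markov bound $u_{t+s}\le\psi_t[u_s]$ and the spectral gap in (G2); then derive the Riccati inequality $a'(t)\le -Ca(t)^2$ from Lemma \ref{lem: prop-G}(iii) and (G5); integrate to get $a(t)\le C/t$; and bootstrap to the uniform bound with $u_{2t}\le\psi_t[u_t]$. Your extra care in justifying differentiability of $a$ (Lipschitz, hence absolutely continuous with a.e. derivative) is a minor refinement of the paper's brief assertion, not a different argument.
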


\begin{proof}
We first show that 
\[
  a(t) \to 0 \, \text{ and }  \,\sup_{x \in E}u_{t}(x) \to 0
\]
as $t \to \infty$.

\medskip

Recall $\zeta=\inf\{t>0: \langle1, X_t\rangle = 0\}$ is the extinction time. Due to (G3), we have $\mathbb P_{\delta_{x}}(\zeta<\infty)=1$, namely, $u_{t}(x)=\mathbb P_{\delta_{x}}(\zeta>t)\to 0$, as $t \to \infty$, for all $x \in E$. 
Therefore, $a(t)=\langle \tilde\varphi, u_{t}\rangle\to 0$, as $t\to\infty$, by dominated convergence. To get to the uniform convergence of $u_{t}$ to zero, let us note that $u_{t+s}(x)=u_{t}[1-u_{s}](x)$ by the Markov branching property. Appealing to \eqref{eq: diff-eq} and Lemma \ref{lem: prop-G} (i), we find that
\begin{equation}\label{eq: ut-bd}
0\le u_{t+s}(x)=\psi_{t}[u_{s}](x)+\int_{0}^{t}\psi_{l}\big[\G[u_{t+s-l}]\big](x)\d l\le \psi_{t}[u_{s}](x),
\end{equation}
by Lemma \ref{lem: prop-G} (i). Combined with (G2), this yields
\begin{equation}
\norm{u_{t+s}}_{\infty}\le \norm{\psi_{t}[u_{s}]}_{\infty}\le a(s)\norm{\varphi}_{\infty}+O({\rm e}^{-\varepsilon t}). 
\label{atou}
\end{equation}
Taking first $t\to\infty$ and then $s\to\infty$ gives the desired limit $||u_t||\to0$ as $t\to0$. 

\medskip

Now we prove the required upper bound on $a(t)$ and $\norm{u_{t}}_{\infty}$. Due to the above calculations, we may fix $t_0 > 0$ such that $\norm{u_{t}}_{\infty} \le 1/2$ for all $t\ge t_{0}$. 
Note that the integrand in \eqref{eq: diff-eq'} is bounded due to our assumptions and Lemma \ref{lem: prop-G} (i). It follows that $a(t)$ is differentiable. 
Differentiating   \eqref{eq: diff-eq'} for $t\ge t_{0}$ and then applying the bound in Lemma \ref{lem: prop-G} (iii), we obtain that for $t\ge t_{0}$, 
\begin{align}
\notag
a'(t) =  \langle \tilde \varphi, \mathscr{G}[u_{t}]\rangle 
\le -C_{1}\left\langle \tilde \varphi,  \gamma{\V}[u_t]\right\rangle 
\le - C_{2} \langle \tilde\varphi, u_t\rangle^2
= -C_2a(t)^2,   \label{bd: at1}
\end{align}
where we have used assumption (G5) in the second inequality. 


\medskip

Integrating from $t_{0}$ to $t$ yields 
\[
a(t)\le \left(C_{2}(t-t_{0})+a(t_{0})^{-1}\right)^{-1} \le \left(C_{2}t\right)^{-1}, 
\]
where the last inequality holds for $t$ sufficiently large.
The upper bound for $a(t)$ then follows. We may then apply the same techniques as in \eqref{atou} by setting $s = t$ to obtain the uniform bound for $u_t(x)$.
\end{proof}


The next result shows that the long-term behaviour of $u_t/\varphi$ and $a(t)$ are the same, which will be key to obtaining the correct constants in the bounds obtained in the previous lemma.

\begin{lemma}
\label{lem: iter}
Suppose that there exist $\kappa, \eta \in(0, \infty)$ such that $\norm{u_{t}}_{\infty} \le \kappa t^{-\eta}$ for all $t>0$. Then we can find some constant $C\in (0, \infty)$ which does not depend on $\kappa$  such that 
\[
\sup_{x \in E} \left|\frac{u_t(x)}{\varphi(x)} - a(t) \right|  \le C\kappa^{2} t^{-2\eta}, \quad \text{for all } t>0. 
\]
\end{lemma}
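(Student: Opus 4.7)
The plan is to subtract the integral equation for $v_t(x) \coloneqq u_t(x)/\varphi(x)$ from the one for $a(t)$ obtained in Proposition \ref{prop: diff-eq}, and then to control the resulting difference using two ingredients: the spectral gap supplied by assumption (G2), and the quadratic behaviour of $\G$ near zero supplied by Lemma \ref{lem: prop-G}. Dividing \eqref{eq: diff-eq} (with $g = \mathbf 0$) by $\varphi(x)$ and using the left-eigenmeasure identity $\langle \tilde \varphi, \psi_s[h]\rangle = \langle \tilde\varphi, h\rangle$, which holds in the critical regime by (G2), together with \eqref{eq: diff-eq'}, I would obtain the identity
\[
v_t(x) - a(t) = \Big(\tfrac{\psi_t[\mathbf 1](x)}{\varphi(x)} - \langle \tilde\varphi, \mathbf 1\rangle\Big) + \int_0^t \Big(\tfrac{\psi_s[\G[u_{t-s}]](x)}{\varphi(x)} - \langle \tilde\varphi, \G[u_{t-s}]\rangle \Big)\d s.
\]
The boundary term is $O(\mathrm e^{-\varepsilon t})$ directly by (G2). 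For the integrand, applying (G2) by linearity to the non-negative function $-\G[u_{t-s}]$ (which is non-negative because $\G \le 0$ by Lemma \ref{lem: prop-G}(i)) yields the pointwise bound $|\cdots| \le C \mathrm e^{-\varepsilon s} \|\G[u_{t-s}]\|_\infty$, uniformly in $x$.

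Next, I would establish the auxiliary estimate $\|\G[h]\|_\infty \le C \|h\|_\infty^2$ for every $h \in L^{+}_{\infty,1}(E)$. When $\|h\|_\infty \le 1/2$ this follows from Lemma \ref{lem: prop-G}(ii) combined with $\|\V[h]\|_\infty \le C\|h\|_\infty^2$, where the latter bound uses assumption (G4) to control the number of pairs $(i,j)$ with $i \ne j$. When $\|h\|_\infty \ge 1/2$ the crude bound $|\G[h]| \le C$ from Lemma \ref{lem: prop-G}(i) gives $|\G[h]| \le 4C\|h\|_\infty^2$. Plugging $h = u_{t-s}$ into this estimate and using the hypothesis together with $\|u_r\|_\infty \le 1$, the integrand's absolute value is bounded by $C \mathrm e^{-\varepsilon s} \min(1, \kappa^2(t-s)^{-2\eta})$.

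Finally, I would estimate the integral by splitting at $s = t/2$. On $[0, t/2]$ one has $(t-s)^{-2\eta} \le (t/2)^{-2\eta}$, so factoring this out and integrating the exponential yields a contribution of order $\kappa^2 t^{-2\eta}$. On $[t/2, t]$ the prefactor is bounded by $\mathrm e^{-\varepsilon t/2}$, and the potential non-integrable singularity at $s = t$ arising when $2\eta \ge 1$ is tamed by the cap $1$ inside the minimum; a brief case analysis on $2\eta$ versus $1$ shows that the remaining inner integral grows at most polynomially in $t$ (and in $\kappa$), so the entire $[t/2, t]$ contribution is exponentially small and absorbed into $C\kappa^2 t^{-2\eta}$ for $t$ large. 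For $t$ small, the bound is trivial since $\kappa^2 t^{-2\eta}$ diverges, while the left-hand side is uniformly bounded because $u_t \le \psi_t[\mathbf 1]$ by Lemma \ref{lem: prop-G}(i) and hence $v_t \le \|\psi_t[\mathbf 1]/\varphi\|_\infty$, controlled via (G2). The main obstacle here is keeping the constant $C$ genuinely uniform in $\kappa$ across the case analysis of $2\eta \lessgtr 1$, which requires carefully tracking the exponents $\kappa^{1/\eta}$ that emerge when integrating $\min(1, \kappa^2 r^{-2\eta})$ up to $t/2$, and comparing them against $\mathrm e^{-\varepsilon t/2}$; the rest is a routine calculation.
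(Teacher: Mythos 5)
Your proposal is correct and follows essentially the same route as the paper: subtract the integral equations for $u_t/\varphi$ and $a(t)$, apply the uniform exponential convergence in (G2) to the semigroup terms, bound $\norm{\G[u_s]}_\infty$ quadratically via Lemma \ref{lem: prop-G}(ii) together with (G4), and split the convolution integral at $t/2$. The only (cosmetic) differences are the orientation of the time variable in the convolution and your use of a global bound $\norm{\G[h]}_\infty \le C\min(1,\norm{h}_\infty^2)$ in place of the paper's threshold time $t_0$ beyond which $\norm{u_t}_\infty\le 1/2$; both arguments share the same (harmless, in view of how the lemma is later applied) looseness in absorbing the exponentially small remainders into $C\kappa^2 t^{-2\eta}$.
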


\begin{proof}
Comparing \eqref{eq: diff-eq} with \eqref{eq: diff-eq'}, we find that
\begin{align*}
\left|\frac{u_{t}(x)}{\varphi(x)}-a(t)\right|&\le \left| \frac{\psi_{t}[\mathbf 1](x)}{\varphi(x)}-\langle \tilde\varphi, \mathbf 1\rangle \right| + \int_{0}^{t}\left|\frac{\psi_{t-s}\big[\G[u_{s}]\big](x)}{\varphi(x)}-\langle \tilde\varphi, \G[u_{s}]\rangle\right| \d s\\
&\le C_{1} {\rm e}^{-\varepsilon t}+\int_{0}^{t} C_{1}{\rm e}^{-\varepsilon (t-s)} \norm{\G[u_{s}]}_{\infty} \,\d s,
\end{align*}
where the constant $\varepsilon>0$ exists thanks to (G2). Since we assumed that $\norm{u_{t}}_{\infty} \le \kappa t^{-\eta}$ for some constants $\kappa, \eta$, we can find $t_{0}>0$ such that $\textstyle\sup_{x \in E}\sup_{t\ge t_{0}}u_{t}(x)\le 1/2$. Take $t\ge 2t_{0}$, then the integral above can be bounded as follows:
\begin{align}
\int_{0}^{t} C_{1}{\rm e}^{-\varepsilon (t-s)} \norm{\G[u_{s}]}_{\infty} \,\d s&=\int_{0}^{t/2} C_{1}{\rm e}^{-\varepsilon (t-s)} \norm{\G[u_{s}]}_{\infty} \,\d s+\int_{t/2}^{t} C_{1}{\rm e}^{-\varepsilon (t-s)} \norm{\G[u_{s}]}_{\infty} \,\d s \notag\\
&\le \frac{C_{1}\norm{\gamma}_{\infty}N_{\max}}{\varepsilon}{\rm e}^{-\varepsilon t/2}+\int_{t/2}^{t} C_{1}{\rm e}^{-\varepsilon (t-s)} \norm{\G[u_{s}]}_{\infty} \,\d s,
\label{2nd}
\end{align}
where we have used Lemma \ref{lem: prop-G} (i). We note that, due to (G4), ${\V}[u_{s}]\le N_{\max}^{2}\norm{u_{s}}_{\infty}^{2}$. Using this together with Lemma \ref{lem: prop-G} (ii), we deduce for the second term in \eqref{2nd} that $\norm{\G[u_{s}]}_{\infty}\le \norm{{\V}[u_{s}]}_{\infty}+\norm{u_{s}^{3}}_{\infty}\le C_{2}\norm{u_{s}}_{\infty}^{2}$. The latter is bounded by $C_{2}\kappa^{2}s^{-2\eta}$ due to the assumption of the lemma. Therefore,
\begin{align*}
\int_{t/2}^{t} C_{1}{\rm e}^{-\varepsilon (t-s)} \norm{\G[u_{s}]}_{\infty} \,\d s &\le C_{3}\kappa^{2}\int_{t/2}^{t} {\rm e}^{-\varepsilon (t-s)}s^{-2\eta}\d s\\
&=\frac{C_{3}\kappa^{2}}{\varepsilon}\left(t^{-2\eta}-4{\rm e}^{-\varepsilon t/2}t^{-2\eta}\right)+C_{4}\int_{t/2}^{t}{\rm e}^{-\varepsilon (t-s)}s^{-2\eta-1}\d s=O(t^{-2\eta}). 
\end{align*}
Putting the pieces  together, we obtain the claimed bound in the lemma. 
\end{proof}

We are now ready to prove our Theorem \ref{lineardecaygen}

\begin{proof}[Proof of Theorem \ref{lineardecaygen}]
Applying Lemma \ref{lem: iter} with $\eta = 1$ and $\kappa$ being some positive constant (this is permitted thanks to Lemma \ref{lem: unif-small}), we have 
\[
\sup_{x \in E}\left|\frac{u_{t}(x)}{\varphi(x)}-a(t)\right|=O(t^{-2}), \quad t\to\infty. 
\]
On the other hand, we've seen in Lemma \ref{lem: a-lowerbd} that $a(t)^{-1}=O(t)$. It follows that 
\begin{equation}
\label{eq: u_t-a_t}
\sup_{x \in E}\left|\frac{u_{t}(x)}{\varphi(x)a(t)}-1\right|=O(t^{-1}), \quad t\to\infty. 
\end{equation}
Applying Lemma \ref{lem: prop-G} (iv), we deduce that
\begin{align}\notag
\sup_{x \in E}\left|{\V}[u_{t}](x)-a(t)^{2}{\V}[\varphi](x)]\right|&=\sup_{r\in D, \up\in V}a(t)^{2}\left|{\V}\Big[\frac{u_{t}}{a(t)}\Big](x)-{\V}[\varphi](x)\right| \\ \label{eq: bd-V}
&\le C\, a(t)^{2} \sup_{x \in E}\left|\frac{u_{t}(x)}{a(t)}-\varphi(x)\right| =O(t^{-3}). 
\end{align}
Therefore, for all $t\ge t_{0}$,
\begin{align*}
a(t)-a(t_{0})&=\int_{t_{0}}^{t} \langle \tilde\varphi, \G[u_{s}]\rangle \d s \\
&
= -\frac12\int_{t_{0}}^{t} \Big(\langle \tilde\varphi, \gamma{\V}[u_{s}]\rangle +O(\norm{u_{s}}^{3}_{\infty})\Big) \d s \\
&
=-\frac12\int_{t_{0}}^{t} \Big(\langle \tilde\varphi, \gamma{\V}[u_{s}]\rangle +O(s^{-3})\Big) \d s \\
&
= -\frac12\int_{t_{0}}^{t} \Big(a(s)^{2}\langle \tilde\varphi, \gamma{\V}[\varphi]\rangle +O(s^{-3})\Big) \d s\\
&
=-\frac{\Sigma}{2}\int_{t_{0}}^{t} a(s)^{2}\big(1+o(1)\big) \,\d s,
\end{align*}
where we have used \eqref{eq: bd-gg} in the second equality, \eqref{eq: a-upperbd} in the third, \eqref{eq: bd-V} in the fourth  and Lemma \ref{lem: a-lowerbd} in the final equality.
This implies that 
\[
a(t)\sim \frac{2}{\Sigma t} \text{ as }t\to\infty.
\]
 The desired asymptotic for $u_{t}$ then follows from \eqref{eq: u_t-a_t}. 
\end{proof}

\section{Step 3: Proof of Theorem \ref{Ygen}}\label{Sectstep3} 
Returning to \eqref{prelimit}, the conclusions of Theorem \ref{genmoments} and Theorem \ref{lineardecaygen} yield
\[
\lim_{t\to\infty}
\mathbb{E}_{\delta_{x}}[\langle \varphi, X_t\rangle^k|\zeta>t]  
=\lim_{t\to\infty} \frac{ \mathbb{E}^\varphi_{\delta_{x}}[\langle \varphi, X_t\rangle^{k-1}/ t^{k-1} ]}{ t \mathbb{E}^\varphi_{\delta_{x}}\left[1/\langle \varphi, X_t \rangle\right]} = \left(\Sigma/2\right)^{k}k!, \qquad k\in\mathbb{N}.
\]
The right-hand side above is precisely the $k^{th}$ moment of an exponential distribution with parameter $2/\Sigma$. Hence, at this stage, again thanks to the {\it moment problem} for the exponential distribution, noting that $\langle\tilde\varphi, \varphi \rangle = 1$, we conclude that Theorem \ref{Ygen} holds for the special case that $f = \varphi$.

\medskip

We now deal more generally with the case that $f\in L^+_\infty(E)$. 

\begin{proof}[Proof of Theorem \ref{Ygen}]
As alluded to above, we introduce $\tilde{f} = f- \langle \tilde\varphi, f\rangle \varphi$ and consider the behaviour of $\langle \tilde{f}, X_t \rangle/ t$ as $t\to\infty$. The first thing we should note in this direction is that 
\[
\langle\tilde\varphi, \tilde f\rangle = \langle\tilde\varphi,f\rangle - \langle \tilde\varphi, f\rangle \langle\tilde\varphi,\varphi\rangle = 0,
\]
where we have used that $\langle\tilde\varphi, \varphi \rangle  = 1$.
Hence, 
assumption (G2) tells us that for $t$ sufficiently large
\[
\sup_{x \in E} \left| \frac{\psi_t[\tilde{f}] (x)}{\varphi(x)}  \right|  = O({\rm e}^{-\epsilon t}),
\]
for some $\epsilon>0$. In particular, we have $\sup_{x \in E}\psi_t[\tilde{f}](x) \to 0$ as $t \to \infty$.
\medskip

We also note that, under the assumption that $0\leq f\leq \varphi $ we have the estimate 
\begin{equation}
\label{bounds}
-\langle\tilde\varphi, f \rangle \varphi \leq \tilde f\leq (1-\langle\tilde\varphi , f\rangle)\varphi.
\end{equation}
Further, since $f\leq \varphi$,  $\langle\tilde \varphi, f  \rangle \leq \langle\tilde \varphi, \varphi\rangle = 1 $, the upper bound in \eqref{bounds} is positive. As such, $|\tilde f|\in L^+_\infty(E)$.

\medskip

We will use these facts to prove that $\langle \tilde f, X_t\rangle/t$ converges weakly under $\mathbb{P}_{\delta_{x}}(\, \cdot\, | \zeta>t)$ to zero as $t\to\infty$. In that case, thanks again to Slutsky's Theorem we can deduce that, for $x \in E$,
\begin{align}
&
\lim_{t\to\infty}\mathbb{E}_{\delta_{x}}\left[\left. \exp\left( -\theta\frac{ \langle f, X_t \rangle}{t}\right)\right|\zeta>t\right] \notag\\
& =\lim_{t\to\infty}\mathbb{E}_{\delta_{x}}\left[\left. \exp\left( -\theta \frac{\langle \tilde f, X_t\rangle}{t} -\theta \langle\tilde\varphi, f\rangle\frac{\langle\varphi, X_t\rangle}{t}\right)\right|\zeta>t\right]  \notag\\
&= \frac{1}{1 + \Sigma  \theta\langle\tilde\varphi, f \rangle/2},
\label{fullYaglom}
\end{align}
as required. 

\medskip

Let us thus consider the estimate
\begin{equation}
\mathbb{P}_{\delta_{x}}(|\langle \tilde{f}, X_t \rangle| /t> \varepsilon |\zeta>t)\leq \frac{1}{\varepsilon^2} \frac{ \mathbb{E}_{\delta_{x}}\left[ \langle \tilde{f}, X_t \rangle^2 /t\right]}{t\mathbb{P}_{\delta_{x}}(\langle 1, X_t\rangle >0) }.
\label{L2estimate2}
\end{equation}
In order to deal with the numerator on the righthand side above, we will use the {\it many-to-two} formula given in Lemma \ref{lem:M22}. We restate it here in the required form for convenience. 

\medskip

Let $g\in L^{+}_{\infty}(E)$. Then the many-to-two formula reads 
\begin{align}
\mathbb{E}_{\delta_x}\Big[\langle g, X_{t}\rangle^2 \Big]
&= \psi_{t}[g^2](x)
+\int_{0}^{t} \psi_{s}\Big[\gamma{\V}[\psi_{t-s}[g]] \Big](x)\d s,
\label{M22}
\end{align}
for $x \in E$ and $t\geq 0$, where ${\V}$ was given in \eqref{2point}.

\medskip

Hence, setting $g = \tilde f$ and dividing by $t^2$, we note that the first term on the right-hand side of \eqref{M22} has the behaviour 
\[
\lim_{t\to\infty} \frac{1}{t}\sup_{x \in E}\left|\psi_t[\tilde f^2](r,\up)\right| \leq  \lim_{t\to\infty} \frac{1}{t}\sup_{x \in E}\left|\psi_t[\tilde{f}^2](x) - \langle\tilde\varphi, \tilde{f}^2\rangle\varphi\right|
+ \frac{1}{t} \langle\tilde\varphi, \tilde f^2\rangle \norm{\varphi}_\infty  = 0,
\]
where the final equality uses (G2). For the integral term in \eqref{M22}, we can handle it using the ergodic convolution Theorem \ref{Ylem}, the change of measure Lemma \ref{spinemarkov} and the many-to-one formula in Lemma \ref{lem:M21}. In particular,  for $u\in[0,1]$ and $t\geq 0$,
\begin{align*}
 \psi_{ut}\Big[\gamma {\V}[\psi_{t(1-u)}[{\tilde f}] ] \Big](x) 
 &= \frac{\psi_{ut}\Big[\varphi \frac{\gamma}{\varphi}{\V}[\psi_{t(1-u)}[{\tilde f}] ] \Big](x)}{\varphi(x)} \varphi(x)\\
 &=\mathbf{E}^\varphi_{x} \left[\frac{\gamma(Y_{ut})}{\varphi(Y_{ut})}{\V}[\psi_{t(1-u)}[{\tilde f}]](Y_{ut})\right]\varphi(x).
\end{align*}
Moreover, defining $F(x, u, t) =\varphi(x)^{-1}\gamma(r, \up){\V}[\psi_{t(1-u)}[{\tilde f}] ](x)$, Lemma \ref{lem: prop-G} and the fact that $\psi_t[\tilde{f}] \to 0$ imply that
\[
F(x, u): = \lim_{t\to\infty} F(x, u, t)= \lim_{t\to\infty} \frac{ \gamma(x)}{\varphi(x)} {\V}[\psi_{t(1-u)}[{\tilde f}] ](x)=0,
\]
so that the conditions of Theorem \ref{Ylem} are trivially met (i.e. $F\in \mathcal{C}$). 
Hence, we have 
\begin{align*}
\lim_{t\to\infty}\mathbb E_{\delta_x}\left[\frac{\langle \tilde f, X_{t}\rangle^2}{t} \right]
&=\lim_{t\to\infty} \int_0^1 \mathbf{E}^\varphi_{x} \left[\frac{\gamma(Y_{ut})}{\varphi(Y_{ut})}{\V}[\psi_{t(1-u)}[{\tilde f}](Y_{ut})\right] \d u 
=0.
\end{align*}
Using this back in \eqref{L2estimate2}, together with the convergence of $t\mathbb{P}_{\delta_{x}}(\zeta >t) $ as $t\to\infty$ we see that 
\[
\lim_{t\to\infty}\mathbb{P}_{\delta_{x}}(| \langle \tilde{f}, X_t \rangle| /t> \varepsilon | \zeta>t)
\leq \lim_{t \to \infty}\frac{1}{\varepsilon^2} \frac{\mathbb{E}_{\delta_{x}}\left[ {\langle \tilde{f}, X_t \rangle^2 }/{t}\right]}{t\mathbb{P}_{\delta_{x}}(\zeta >t)} = 0.
\label{L2estimate}
\]
This tells us that $\langle \tilde{f}, X_t \rangle /t$ converges weakly to zero as $t\to\infty$ and hence, thanks to \eqref{fullYaglom}, this completes the proof of the Yaglom limit for general $f\in L^+_\infty(E)$. 
\end{proof}

\section{Neutron transport equation}\label{NTE}

We will now illustrate our main results in the context of the neutron branching process (NBP), for which the non-local branching assumption is crucial. Moreover, we are  interested in this class of branching process given the recent surge of results; see \cite{SNTE, SNTEII, SNTEIII, MCNTE, MultiNTE}. Let us spend some time discussing the underlying model. We will often reuse previously defined notation but in a way that will cause no confusion.

\smallskip

The neutron transport equation (NTE) describes the density of neutrons in an inhomogeneous fissile medium. The backwards form of the NTE can be written as follows\footnote{The operator $\nabla_r$ corresponds to gradient with respect to the variable $r\in D$.},
\begin{align}
\frac{\partial}{\partial t}\psi_t(r, \up) &=\up\cdot\nabla_r\psi_t(r, \up)  -\sigma(r, \up)\psi_t(r, \up)\notag\\
&+ \sigma_{\texttt{s}}(r, \up)\int_{V}\psi_t(r, \up') \pi_{\texttt{s}}(r, \up, \up')\d\up' + \sigma_{\texttt{f}}(r, \up) \int_{V}\psi_t(r, \up') \pi_{\texttt{f}}(r, \up, \up')\d\up',
\label{bNTE}
\end{align}
where the neutron density $\psi_t(r, \up)$ is a function of time, $t \ge 0$, and the configuration variables $ (r, \up) \in  D \times V$ where the spatial domain $D\subseteq\mathbb{R}^3$ is non-empty, smooth and bounded such that $\partial D$ has zero Lebesgue measure, and $V = \{\up\in \mathbb{R}^3:\up_{\texttt{min}}\leq  |\up|\leq \up_{\texttt{max}}\}$ is the velocity space, where $0<\upsilon_{\texttt{min}}<\upsilon_{\texttt{max}}<\infty$. Furthermore, the other components of~\eqref{bNTE} have the following interpretation:
\begin{align*}
\sigma_{\texttt{s}}(r, \up) &: \text{ the rate at which scattering occurs from incoming velocity $\up$,}\\
\sigma_{\texttt{f}}(r, \up) &: \text{  the rate at which fission occurs from incoming velocity $\up$,}\\
\sigma(r, \up) &: \text{ the sum of the rates } \sigma_{\texttt{f}}+ \sigma_{\texttt{s}} \text{ and is known as the total cross section,}\\
\pi_{\texttt{s}}(r, \up, \up')\d\up' &: \text{  the scattering yield at velocity $\up'$ from incoming velocity }  \up, \\
 &\hspace{0.5cm}\text{ satisfying }\textstyle{\int_V}\pi_{\texttt{s}}(r, \up, \up'){\rm d}\up'=1,\text{ and }\\
 \pi_{\texttt{f}}(r, \up, \up')\d\up' &:  \text{  the neutron yield at velocity $\up'$ from fission with incoming velocity }   \up,\\
 &\hspace{0.5cm}\text{ satisfying }{\color{black} \textstyle{\int_V}\pi_{\texttt{f}}(r, \up, \up')\d\up' <\infty.}
 \end{align*}
We also enforce the following initial and boundary conditions
\begin{equation}
\left\{
\begin{array}{ll}
\psi_0(r, \up) = g(r, \up) &\text{ for }r\in D, \up\in{V},
\\
&
\\
\psi_t(r, \up) = 0& \text{ for } t \ge 0 \text{ and } r\in \partial D
\text{ if }\up
\cdot{\bf n}_r>0,
\end{array}
\right.
\label{BC}
\end{equation}

where ${\bf n}_r$ is the outward unit normal at $r \in \partial D$ and $g: D \times V \to [0, \infty)$ is a bounded, measurable function. 

\medskip


%
%
%
%
%
%
%
%
%
%
\begin{wrapfigure}[16]{r}{0.5\textwidth}
\vspace{-20pt}
\label{fig}
  \begin{center}
\includegraphics[height=6cm]{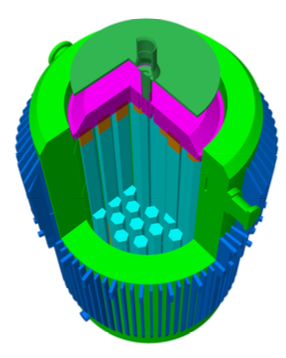}
  \end{center}
  \vspace{-20pt}
\caption{\it The geometry of a nuclear reactor core representing a physical domain $D$, onto which the different cross-sections $ \sigma_{\emph{\texttt{s}}},  \sigma_{\emph{\texttt{f}}},  \pi_{\emph{\texttt{s}}},  \pi_{\emph{\texttt{f}}}$ are mapped, also as a function of neutron velocity.}
\end{wrapfigure}
Recent literature has explored the relationship between  the NTE and a class of stochastic processes; cf. \cite{MultiNTE, MCNTE, SNTE, SNTEII, SNTEIII}. Roughly speaking, the solution to the NTE can be seen as a semigroup of the underlying physical process of neutrons, described as a branching Markov process, i.e. the NBP. The NBP defined in \cite{SNTE, SNTEII, MultiNTE}, which at time $t\ge 0$ is represented by a configuration of particles which are specified via their physical location and velocity in $D\times V$, say $\{(r_i(t), \up_i(t)): i = 1,\dots , N_t\}$, where $N_t$ is the number of particles alive at time $t$. Analogously to \eqref{empdist} in the general MBP setting, the NBP, which we also denote $(X_t, t \ge 0)$, is given by the empirical distribution of these configurations, with $E = D \times V$ and $x_i(t) = (r_i(t), \up_i(t))$.

\medskip

Thus, $(X_t, t\geq 0)$ is a stochastic process valued in the space of finite counting measures
$
\mathcal{M}(D\times V): = \{\textstyle{\sum_{i = 1}^n}\delta_{(r_i,\up_i)}: n\in \mathbb{N}, (r_i,\up_i)\in D\times V, i = 1,\cdots, n\}
$
which evolves randomly as follows.

\medskip

A particle positioned at $r$ with velocity $\up$ will continue to move along the trajectory $r + \up t$, until one of the following things happens. 
\medskip

\noindent{\bf(i)} The particle leaves the physical domain $D$, in which case it is instantaneously killed. 
\medskip

\noindent{\bf(ii)} Independently of all other neutrons, a scattering event occurs when a neutron comes in close proximity to an atomic nucleus and, accordingly, makes an instantaneous change of velocity. For a neutron in the system with position and velocity $(r,\up)$, if we write 
$T_{\texttt{s}}$ for the random time that scattering may occur then, independently of any other physical event that may affect the neutron, 
$
\Pr(T_{\texttt{s}}>t) = \exp\{-\textstyle{\int_0^t} \sigma_{\texttt{s}}(r+\up s, \up){\rm d}s \}, $ for $t\geq0.$

\medskip
When scattering occurs at space-velocity $(r,\up)$, the new velocity $\up'\in V$ is selected with probability $\pi_{\texttt{s}}(r, \up, \up')\d\up'$. 
\medskip

\begin{wrapfigure}{r}{0.3\textwidth}
\vspace{10pt}
  \begin{center}
\includegraphics[height=5cm]{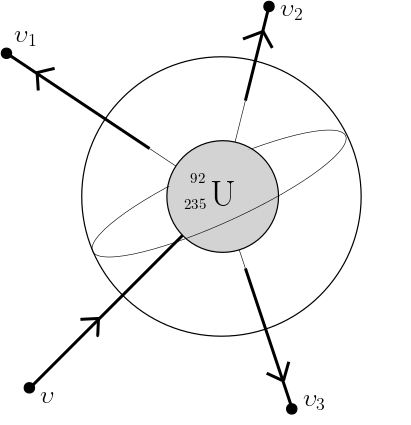}
  \end{center}
  \vspace{-15pt}
\caption{\it A fission event between e.g. a Uranium-235 atom and a neutron with incoming velocity $\upsilon$ produces three outgoing neutrons with velocity $\upsilon_1,\upsilon_2, \upsilon_3$.}
\vspace{-40pt}
\label{u235}
\end{wrapfigure}

\noindent{\bf(iii)}  Independently of all other neutrons, a fission event occurs when a neutron smashes into an atomic nucleus. 
For a neutron in the system  with initial position and velocity $(r,\up)$, if we write $T_{\texttt{f}}$ for the random time that fission may occur, then, independently of any other physical event that may affect the neutron, 
$
\Pr(T_{\texttt{f}}>t) = \exp\{-\textstyle{\int_0^t} \sigma_{\texttt{f}}(r+\up s, \up){\rm d}s \},$ for $t\geq 0.
$
\medskip

When fission occurs, the smashing of the atomic nucleus produces lower mass isotopes and releases a random number of neutrons, say $N\geq 0$, which are ejected from the point of impact with randomly distributed, and possibly correlated, velocities, say $\{\up_i: i=1,\cdots, N\}$. See Figure \ref{u235}. The outgoing velocities are described by  the atomic random measure 
\begin{equation}
\label{PP}
\mathcal{Z}(A): = \sum_{i= 1}^{N } \delta_{\up_i}(A), \qquad A\in\mathcal{B}(V).
\end{equation}

If such an event occurs at location $r\in D$ from a particle with incoming velocity $\up\in{V}$, 
we denote by ${\mathcal P}_{(r,\up)}$ the law of $\mathcal{Z}$, and by $\mathcal{E}_{(r, \up)}$ the corresponding expectation operator.
The probabilities ${\mathcal P}_{(r,\up)}$ are such that, for bounded and measurable $g: V\to[0,\infty)$,
\begin{align}
\int_V g(\up')\pi_{\texttt{f}}(r, v, \up')\d\up' &= {\mathcal E}_{(r,\up)}\left[\int_V g(\up')\mathcal{Z}(\d \up')\right] 
\label{Erv}
\end{align}
Note, the possibility that $\Pr(N = 0)>0$, which is tantamount to neutron capture (that is, where a neutron slams into a nucleus but no fission results and the neutron is absorbed into the nucleus), is included in the notion of fission. Thus, the non-locality occurs in the velocity variable. To make the connection between \eqref{PP} and \eqref{PPgen}, we set $x_i = (r, \up_i)$, $i = 1, \dots, n$.

\medskip

%
%


The NBP is thus parameterised by the quantities $\sigma_{\texttt s}(r,\up), \pi_{\texttt s}(r,\up, \up'), \sigma_{\texttt f}(r,\up)$ and the  measures ${\mathcal P}_{(r,\up)}$ for $r\in D,\up\in V$. Accordingly, we refer to it as a $(\sigma_{\texttt s}, \pi_{\texttt s}, \sigma_{\texttt f}, \mathcal{P})$-NBP.  Although a $(\sigma_{\texttt s}, \pi_{\texttt s}, \sigma_{\texttt f}, \mathcal{P})$-NBP is uniquely defined, a NBP specified by $(\sigma_{\texttt s}, \pi_{\texttt s}, \sigma_{\texttt f}, \pi_{\texttt f})$ alone is not.  Nonetheless, it is easy to show that for a given $\pi_{\texttt f}$,  a $(\sigma_{\texttt s}, \pi_{\texttt s}, \sigma_{\texttt f}, \mathcal{P})$-NBP satisfying \eqref{Erv} always exists. See the discussion in Section 2 of \cite{SNTE}.

\medskip


To make the link between the NTE and the NBP clear, we will reformulate the NTE \eqref{bNTE} in terms of its{ \it Duhamel} representation. From a probabilistic perspective, this is a more natural way of writing the NTE since the irregular nature of the gradient operator makes it hard to work \eqref{bNTE} the pointwise sense. 
The following integral equation was introduced in \cite{SNTE} as a mild form of the NTE,
\begin{equation}
\psi_t[g] = {\texttt{U}}_t[g] + \int_0^t {\texttt{U}}_s[({\bS} + {\bF})\psi_{t-s}[g]]\d s, \qquad t\geq 0,
\label{mild}
\end{equation}
where the semigroup $(\mathtt{U}_t, t\geq0)$ is defined by
\begin{equation*}
{\U}_t[g](r, \upsilon) = g(r + \upsilon t, \upsilon)\mathbf{1}_{(t<\kappa_{r, \upsilon}^D)},\quad 
\kappa_{r, \upsilon}^D = \inf\{t>0 : r+ \up t \not\in D\},
\end{equation*}
 the operator $\bS$ has action on $L^+_\infty(D\times V)$, the space of non-negative and uniformly bounded measurable functions on $D\times V$, given by 
\[
\bS g(r,\up) =\sigma_{\texttt s}(r,\upsilon) \int_{D\times V} [g(r,\upsilon')  - g(r,\up)]\pi_{\texttt s} (r, \upsilon, \upsilon')\d \upsilon', \qquad g\in L^+_\infty(D\times V),
\]
and, similarly,  $\bF$ has action 
\[
\bF g(r, \up) =\sigma_{\texttt f}(r,\upsilon) \int_{D\times V} g(r,\upsilon') \pi_{\texttt f} (r, \upsilon, \upsilon')\d \upsilon' - \sigma_{\texttt f}(r,\upsilon) g(r,\up),  \qquad g\in L^+_\infty(D\times V).
\]

We encourage the reader to compare \eqref{mild} with \eqref{lineareq}. Indeed,  in this setting, the Markov motion is just a deterministic straight line in between scattering and fission events. In particular, $\bP_t = \U_t$ and $\mathscr{A} = \bS + \bF$. Noting ${\mathtt T}: = \up\cdot\nabla_r$ as the operator corresponding to the semigroup $(\U_t, t\geq 0)$, it is also possible to reformulate \eqref{mild} as
\[
  \psi_t[g] = \mathtt{Q}_t[g] + \int_0^t \mathtt{Q}_s[\bF[\psi_{t-s}[g]]] \d s,
\]
where $\mathtt{Q}$ is the Markov semigroup corresponding  to ${\mathtt T}+ \bS$. We refer the reader to \cite{SNTEII} for further details. 

\medskip

It was shown in~\cite{MultiNTE, SNTE, DL6, D} that under the assumption\footnote{We note that this result was actually proved with the additional assumption that $\sigma_\mathtt{s}\pi_\mathtt{s} + \sigma_\mathtt{f}\pi_\mathtt{f}$ {is uniformly bounded below} on $D \times V \times V$, however this is not actually needed.}
\medskip
{\bf
\begin{itemize}
\item[(H1):]  The cross-sections $\sigma_{\texttt{s}}$, $\sigma_{\texttt{f}}$, $\pi_{\texttt{s}}$ and $\pi_{\texttt{f}} $ are uniformly bounded away from infinity
\end{itemize}
}
\medskip
for $g\in L^+_\infty(D\times V)$ the following linear semigroup of the NBP, 
\begin{equation}
\psi_t[g](r,\up) : = \mathbb{E}_{\delta_{(r, \up)}}[\langle g, X_t \rangle], \qquad t\geq 0, r\in \bar{D}, \up\in{V},
\label{semigroup}
\end{equation}
solves \eqref{mild}. Note that (H1) implies (G1) in this setting, since it implies that 
\[
m(r, \up) = \mathcal{E}_{(r, \up)}[\langle 1, \mathcal{Z}\rangle] = \int_V \pi_\mathtt{f}(r, \up, \up')\d \up'
\] 
is uniformly bounded from above.

\medskip

As shown in \cite{MultiNTE, SNTE}, there is a second stochastic process whose average behaviour also provides a solution to the NTE, called the neutron random walk (NRW). 

\medskip

Define, for $r\in D$, $\up,\up'\in V$,
\begin{equation}
\alpha(r,\up)\pi(r,\up,\up') = 
\sigma_{\texttt{s}}(r,\up)\pi_{\texttt{s}}(r, \up, \up') + \sigma_{\texttt{f}}(r,\up) \pi_{\texttt{f}}(r, \up, \up').
\label{spineap0}
\end{equation}

Then the $\alpha \pi$-NRW can be defined as the process $((R_t, \Upsilon_t), t \ge 0)$ whose pathwise evolution is such that, if its current position and velocity are $R_0 = r\in D$ and $\Upsilon_0 = \up\in V$, respectively, then the process will move forward in time in $D$ with linear motion $r+ \up t$.  Unless the NRW first exits the domain $D$, at rate $\alpha(r + \up t, \up)$, a scattering event occurs. Suppose $r'\in D$  is the NRW's position at the scattering event. Then a new velocity $\up'$ is chosen with probability $\pi(r',\up,\up')\d \up'$. Thus the process evolves by moving in straight lines between scatter times, until it exits the domain $D$. 
Note that in order to specify $\alpha$ and $\pi$ individually, it is sufficient to define the product $\alpha\pi$ since we require $\pi$ to be a probability distribution.

\medskip

Suppose we  denote the probabilities of the $\alpha \pi $-NRW by $\mathbf{P}   : =(\mathbf{P} _{(r,\up)}, (r,  \up)\in  D\times V)$. 
Moreover, for $r\in D$ and $\up\in V$, let 
\begin{equation}
  \beta(r,\up)=\sigma_{\texttt{f}}(r,\up)\left(\int_V\pi_{\texttt{f}}(r, \up,\up')\d\up'-1\right)\geq -\sup_{r\in D, \up\in V}\sigma_{\texttt{f}}(r,\up)>-\infty,
  \label{betadef}
\end{equation}
where the lower bound is due to assumption (H1).
The following lemma\footnote{Again, this result was actually proved with the additional assumption that $\sigma_\mathtt{s}\pi_\mathtt{s} + \sigma_\mathtt{f}\pi_\mathtt{f}$ is uniformly bounded below on $D \times V \times V$, however this is not actually needed.}, taken from \cite{SNTE}, identifies the law $\mathbf{P} $ in terms of the solution to the NTE \eqref{mild}, the so called {\it many-to-one} representation. Again, we encourage the reader to compare this to the general case presented in Lemma \ref{lem:M21}.

 \begin{lemma}[Many-to-one]\label{M21}Under assumptions (H1), the  law $\mathbf{P}$ is related to NTE \eqref{mild} via a  semigroup representation, different to  \eqref{semigroup}, given by 
\begin{equation}
\psi_t[g](r,\up)  = \mathbf{E}_{(r,\up)}\left[{\rm e}^{\int_0^t\beta(R_s, \Upsilon_s)\D s}g(R_t, \Upsilon_t) \mathbf{1}_{\{t < \tau^D\}}\right], \qquad t\geq 0,r\in D, \up\in V,
\label{phi}
\end{equation}
where $\tau^D = \inf\{t>0 : R_t\not\in D\}$. 
\end{lemma}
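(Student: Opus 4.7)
The plan is to derive \eqref{phi} as a direct specialization of the general many-to-one formula, Lemma \ref{lem:M21}, to the NBP. For this, I need to identify the components $\bP$, $\gamma$, $\mathcal{Z}$, $m$, $B$ and the auxiliary process $Y$ of the abstract $(\bP,\Phi)$-MBP framework in the present setting. Following the discussion of the NBP in the introduction, I take $\bP$ to be the $\sigma_\mathtt{s}\pi_\mathtt{s}$-process on $D\times V$ (straight-line motion punctuated by scattering at rate $\sigma_\mathtt{s}(r,\up)$ with new velocity drawn from $\pi_\mathtt{s}(r,\up,\cdot)$), killed upon exiting $D$ via the cemetery-state convention discussed after assumption (G2). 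The branching rate is $\gamma(r,\up) = \sigma_\mathtt{f}(r,\up)$, and $\mathcal{Z}$ is the fission offspring point process with law $\mathcal{P}_{(r,\up)}$; by \eqref{Erv}, $m(r,\up) = \mathcal{E}_{(r,\up)}[\langle 1, \mathcal{Z}\rangle] = \int_V \pi_\mathtt{f}(r,\up,\up')\d\up'$. This immediately yields $B(r,\up) = \gamma(r,\up)(m(r,\up)-1) = \beta(r,\up)$, matching \eqref{betadef}.

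The key step is to identify the auxiliary process $(Y_t,t\ge 0)$ of Lemma \ref{lem:M21} with the $\alpha\pi$-NRW. Recall that $Y$ follows the dynamics of $\bP$ augmented by additional jumps at rate $\gamma(r,\up)m(r,\up) = \sigma_\mathtt{f}(r,\up)\int_V\pi_\mathtt{f}(r,\up,\up'')\d\up''$, at which the new velocity is drawn with density $\pi_\mathtt{f}(r,\up,\cdot)/\int_V\pi_\mathtt{f}(r,\up,\up'')\d\up''$. Superposing this mechanism with the scattering dynamics already present in $\bP$, the total rate at which $Y$ changes velocity is $\sigma_\mathtt{s}(r,\up)+\sigma_\mathtt{f}(r,\up)\int_V\pi_\mathtt{f}(r,\up,\up'')\d\up''$, and the instantaneous intensity of transitioning to outgoing velocity $\up'$ is
\[
\sigma_\mathtt{s}(r,\up)\pi_\mathtt{s}(r,\up,\up') + \sigma_\mathtt{f}(r,\up)\pi_\mathtt{f}(r,\up,\up') = \alpha(r,\up)\pi(r,\up,\up'),
\]
which is exactly the transition intensity of the $\alpha\pi$-NRW specified via \eqref{spineap0}. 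Hence $(Y,\mathbf{E})\stackrel{d}{=}((R,\Upsilon),\mathbf{P})$, and the implicit killing of $\bP$ at $\partial D$ corresponds precisely to the indicator $\mathbf{1}_{\{t<\tau^D\}}$ appearing in \eqref{phi}.

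Putting these ingredients together, \eqref{phi} follows by direct substitution into the conclusion of Lemma \ref{lem:M21}, after noting that assumption (H1) implies (G1) in this setting (as already observed immediately after \eqref{semigroup}) so that Lemma \ref{lem:M21} applies. The main content of the proof is the Poisson-superposition identification of $Y$ with the NRW; all other steps are bookkeeping, and I anticipate no substantive obstacle. If a self-contained argument were preferred over appealing to Lemma \ref{lem:M21}, one could alternatively verify directly that the right-hand side of \eqref{phi} satisfies the mild equation \eqref{mild} by conditioning on the first jump time of the NRW and invoking the strong Markov property together with Fubini's theorem, but this amounts to re-proving Lemma \ref{lem:M21} in the NBP case.
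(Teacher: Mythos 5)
Your proposal is correct, but it is worth noting that the paper does not actually prove this lemma at all: it is imported verbatim from \cite{SNTE} (with a footnote relaxing a lower-boundedness hypothesis), and the text merely ``encourages the reader to compare'' it with the general Lemma \ref{lem:M21}. What you have done is carry out that comparison as an actual derivation, and all the identifications check out: taking $\bP$ to be the advection-plus-scattering motion killed on $\partial D$, $\gamma=\sigma_{\texttt f}$ and $\mathcal{Z}$ the fission point process gives $m(r,\up)=\int_V\pi_{\texttt f}(r,\up,\up')\,\d\up'$ via \eqref{Erv} and hence $B=\beta$ as in \eqref{betadef}; and the superposition of the scattering mechanism (rate $\sigma_{\texttt s}$, kernel $\pi_{\texttt s}$) with the extra jumps of $Y$ (rate $\gamma m$, kernel $\pi_{\texttt f}/\!\int_V\pi_{\texttt f}\,\d\up''$) produces total jump intensity $\sigma_{\texttt s}\pi_{\texttt s}+\sigma_{\texttt f}\pi_{\texttt f}=\alpha\pi$, i.e.\ exactly the $\alpha\pi$-NRW of \eqref{spineap0}, with the killing at $\partial D$ accounting for the indicator $\mathbf{1}_{\{t<\tau^D\}}$. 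The only caveat is a structural one: Lemma \ref{lem:M21} is itself stated without proof in this paper (``fairly standard''), so your derivation reduces the NBP statement to another unproved result rather than to first principles; your closing remark that a self-contained argument would condition on the first jump of the NRW and invoke Dynkin's Lemma 1.2 is precisely the route taken in \cite{SNTE}, and acknowledging this keeps the logic honest. In short: correct, and arguably more informative than the paper's citation, since the Poisson-superposition identification of $Y$ with the NRW is the one genuinely non-trivial step and you make it explicit.
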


\medskip

{As discussed in Remark \ref{criticalityremark}}, understanding the long-term growth of the NBP boils down to an understanding of the lead eigenvalue associated to the semigroup \eqref{semigroup} in the spirit of Perron-Frobenius theory. In classical neutron transport theory, this corresponds to the criticality of the underlying NTE \eqref{bNTE}, i.e. the sign of the leading eigenvalue associated to the latter.

\begin{theorem}\label{CVtheorem}
Suppose that (H1) holds, in addition to the condition 
\medskip

{\bf 
\begin{itemize}
\item[(H2):]  $\textstyle{\inf_{r\in D, \up, \up'\in V} \alpha(r,\up)\pi(r,\up,\up') >0.}$
\end{itemize}
}
\medskip

Then, for the semigroup  $(\psi_t,t\geq0)$ identified by \eqref{mild},  there exists  a $\lambda_*\in\mathbb{R}$, a  positive\footnote{To be precise, by a positive eigenfunction, we mean a mapping from $D\times V\to (0,\infty)$. This does not prevent it being valued zero on $\partial D$, as $D$ is an open bounded, convex domain.} right eigenfunction $\varphi \in L^+_\infty(D\times V)$ and a left eigenmeasure which is absolutely continuous with respect to Lebesgue measure on $D\times V$ with density $\tilde\varphi\in L^+_\infty(D\times V)$, both having associated eigenvalue ${\rm e}^{\lambda_* t}$, and such that $\varphi$  (resp. $\tilde\varphi$) is uniformly (resp. a.e. uniformly) bounded away from zero on each compactly embedded subset of $D\times V$. In particular, for all $g\in L^+_{\infty}(D\times V)$,
\begin{equation}
\langle\tilde\varphi, \psi_t[g]\rangle = {\rm e}^{\lambda_* t}\langle\tilde\varphi,g\rangle\quad  \text{(resp. } 
\psi_t[\varphi] = {\rm e}^{\lambda_* t}\varphi
\text{)} \quad t\ge 0.
\label{leftandright}
\end{equation}
Moreover, there exists $\varepsilon>0$ such that 
\begin{equation}
\sup_{g\in L^+_{\infty, 1}(D\times V)}  \left\|{\rm e}^{-\lambda_* t}{\varphi}^{-1}{\psi_t[g]}-\langle\tilde\varphi, g\rangle\right\|_\infty = O({\rm e}^{-\varepsilon t})\, \text{ for all $t \ge 0$.}
\label{spectralexpsgp}
\end{equation}
%
\end{theorem}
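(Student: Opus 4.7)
The plan is to recognize Theorem~\ref{CVtheorem} as an instance of an abstract Perron--Frobenius / quasi-stationary result for Feynman--Kac semigroups of a sub-Markov process killed on leaving $D\times V$, applied to the NRW representation from Lemma~\ref{M21}. In broad outline, the proof would proceed in three steps: (i) recast $(\psi_t,t\geq0)$ as a Feynman--Kac semigroup of the NRW with bounded potential $\beta$; (ii) establish a uniform Doeblin-type minorization for the killed NRW; (iii) conclude via an abstract spectral/ergodic theorem, such as the Champagnat--Villemonais framework \cite{CV, CVecp} or a Krein--Rutman argument on a suitable compact operator.

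For step (i), Lemma~\ref{M21} writes $\psi_t[g](r,\up)=\mathbf{E}_{(r,\up)}[\mathrm{e}^{\int_0^t\beta(R_s,\Upsilon_s)\d s}g(R_t,\Upsilon_t)\mathbf{1}_{\{t<\tau^D\}}]$ with $\beta$ uniformly bounded above by (H1) and uniformly bounded below by \eqref{betadef}. A standard exponential change of measure (absorbing $\mathrm{e}^{\int_0^t\beta\d s}$ into a normalised sub-Markov kernel) reduces the spectral problem for $(\psi_t,t\geq0)$ to one for an honest sub-Markov semigroup, the two differing only by the deterministic factor $\mathrm{e}^{\lambda_\ast t}$.

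For step (ii), assumption (H2) is essential. Combined with the upper bound on $\alpha$ from (H1), one shows that there exist $t_0>0$, a constant $c>0$, a compactly embedded set $K\subset D\times V$, and a nontrivial probability measure $\nu$ supported on $K$ such that
\[
\mathbf{P}_{(r,\up)}\bigl((R_{t_0},\Upsilon_{t_0})\in A,\, t_0<\tau^D\bigr) \;\geq\; c\,\nu(A), \qquad (r,\up)\in D\times V,\ A\in\mathcal{B}(D\times V).
\]
The heuristic is that within time $t_0$ the NRW performs, with uniformly positive probability, at least two scattering events---the first to shake off the degeneracy of the initial velocity, the second to smooth the joint spatial-velocity law---while avoiding $\partial D$; (H2) then guarantees that the resulting density on $K$ is bounded below by a positive constant depending only on $K$ and $t_0$. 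Iterating this one-step minorization also yields the uniform strict positivity claims for $\varphi$ and $\tilde\varphi$ on compactly embedded subsets.

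For step (iii), the Doeblin minorization together with the boundedness of $\beta$ gives, by the abstract results of \cite{CV}, the existence of a unique leading eigenvalue $\lambda_\ast\in\mathbb{R}$, a right eigenfunction $\varphi$ and a left quasi-stationary eigenmeasure (which is absolutely continuous with bounded density $\tilde\varphi$ because the minorising measure $\nu$ is), satisfying \eqref{leftandright}, together with the uniform exponential decay \eqref{spectralexpsgp} after the normalisation $\langle\tilde\varphi,\varphi\rangle=1$. The main obstacle, and the bulk of the technical labour, is step (ii): establishing the Doeblin minorization \emph{uniformly} in the starting configuration requires delicate control near $\partial D$, and in particular for configurations $(r,\up)$ with $\up$ pointing nearly into $\partial D$, which may exit $D$ very quickly. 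One handles these by choosing $K$ well inside $D$ and choosing $t_0$ so that, using (H2), scattering to a ``safe'' inward direction before exit has uniformly positive probability---this is precisely where both (H1) and (H2) are used in an essential way.
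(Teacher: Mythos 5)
You should first note that the paper does not prove Theorem~\ref{CVtheorem} at all: it is imported verbatim from \cite{MultiNTE, SNTE, DL6, D}, so the only fair comparison is with the strategy of those references. Your overall plan --- Feynman--Kac reformulation via the many-to-one Lemma~\ref{M21}, a minorization condition for the killed NRW, and then an abstract quasi-stationarity theorem in the spirit of \cite{CV, CVecp} --- is indeed the route taken in the cited work, so the architecture of your argument is the right one.

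There is, however, a genuine error in your step (ii), and it is precisely at the point you identify as carrying the bulk of the technical labour. The unconditioned, uniform-in-the-starting-point Doeblin minorization
$\mathbf{P}_{(r,\up)}\bigl((R_{t_0},\Upsilon_{t_0})\in A,\ t_0<\tau^D\bigr)\ \ge\ c\,\nu(A)$
for all $(r,\up)\in D\times V$ is \emph{false} for this model: taking $A = D\times V$ it would force $\mathbf{P}_{(r,\up)}(t_0<\tau^D)\ge c>0$ uniformly, yet if $r$ is at distance $\epsilon$ from $\partial D$ in the direction $\up$, the NRW exits by time $\epsilon/\up_{\texttt{min}}$ unless it scatters first, and since the scattering rate $\alpha$ is bounded above by (H1), the probability of scattering before exit is $O(\epsilon)\to 0$. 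Your proposed remedy (``scattering to a safe inward direction before exit has uniformly positive probability'') fails for the same reason. The correct hypotheses --- and the ones verified in \cite{SNTE} --- are the Champagnat--Villemonais conditions on the \emph{conditioned} kernel: (A1) a minorization of $\mathbf{P}_{(r,\up)}\bigl((R_{t_0},\Upsilon_{t_0})\in\cdot \mid t_0<\tau^D\bigr)$ by $c\,\nu$, which can hold uniformly because the conditioning renormalises away the vanishing survival probability near the boundary, together with (A2), a uniform comparison $\mathbf{P}_{\nu}(t<\tau^D)\ge c'\,\sup_{(r,\up)}\mathbf{P}_{(r,\up)}(t<\tau^D)$ controlling the ratio of survival probabilities over time. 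Without (A2), or with your unconditioned version of (A1), the argument does not go through; with the corrected pair of conditions the rest of your outline (steps (i) and (iii), including the $h$-transform bookkeeping that produces $\lambda_*$, $\varphi$, $\tilde\varphi$ and the uniform exponential rate \eqref{spectralexpsgp}) is sound.
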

{As noted in Remark \ref{criticalityremark}, }
the criticality of the NTE is determined by the sign of $\lambda_*$. When $\lambda_*>0$ (resp. $\lambda_*<0$), the NBP is supercritical (resp. subcritical), situations which are undesirable for nuclear reactors. The   case of criticality,   $\lambda_* = 0$, has the property that there is a mean steady-state for the NBP in the sense that $\psi_t[g](r,\up)\to \langle\tilde\varphi, g\rangle\varphi(r,\up)$ as $t\to\infty$. This is the setting in which nuclear reactors typically operate and thus fuels our motivation for studying the NBP at criticality. 

\medskip

Despite the convergence to the steady state in the critical case, in Theorem 5.2 of \cite{SNTE}, under the additional assumption 
\medskip

{\color{black}
{\bf  (H3): There exists an open ball $B$, compactly embedded in $D$, such that
$$
\inf_{r \in B, \upsilon, \upsilon' \in V}\sigma_{\texttt{f}}(r, \upsilon)\pi_{\texttt{f}}(r, \upsilon, \upsilon') > 0,
$$}

it was shown for the critical NBP that, for all $r\in D, \up\in V$,
 \[
 \mathbb{P}_{\delta_{(r,\up)}}(\zeta <\infty) = 1\,\text{ where }\,\zeta = \inf\{t>0: \langle 1, X_t\rangle = 0\},
 \]
i.e. the probability of extinction is $1$. 
}

\medskip

For the rest of this section, we restrict ourselves to the case where $\lambda_* = 0$. As such, in this case, Theorem~\ref{CVtheorem} implies that (G2) holds for the NBP. Moreover, due to the above discussion, (H3) entails (G3). We will also assume that (G4) holds, which we restate in this context:
\medskip
{\bf
\begin{itemize}
\item[(H4):] The number of offspring produced at a fission event is bounded above by a constant $N_{\max}$.
\end{itemize}
}

\medskip

We note that (H4) is a natural physical constraint in the setting of the NBP, since fission events can only result in the release of a finite number of neutrons. 

\medskip

Before giving the equivalent formulations of Theorems \ref{lineardecaygen} and \ref{Ygen} in the setting of the NBP, 
we will need to introduce additional notation and one more assumption.
For $g \in L_\infty^+(V)$ define the spatial variance of fission velocities by 
\begin{equation}
  {\VV}[g](r,\up) = \mathcal{E}_{(r,\up)}\left[\langle g ,\mathcal{Z}\rangle^2-\langle g^2 ,\mathcal{Z}\rangle \right]
  = \mathcal E_{(r, \up)}\Big[\sum_{\substack{1\le i,j\le N\\  i\ne j}}g(\up_{i})g(\up_{j})\Big].
  \label{Var}
\end{equation}
We will often abuse our notation and, for $g\in L^+_\infty(D\times V)$, write ${\VV}[g] $ in place of $ {\VV}[g(r, \cdot)]$. We will also redefine the key quantity
\begin{equation}
\Sigma =\langle \tilde\varphi , \sigma_{\mathtt f}{\VV}[{\varphi}]\rangle,
\label{Sigma}
\end{equation}
which is finite thanks to (H1), (H4) and the boundedness of $\tilde\varphi$ (cf. Theorem \ref{CVtheorem}).

\medskip

The following assumption can be seen as the NBP counterpart of (G5).

\medskip

{\bf 
(H5): There exists a constant $C > 0$ such that for all $g \in L^+_\infty(D \times V)$, 
\[
  \langle \tilde\varphi, \sigma_\mathtt{f}{\VV}[g]\rangle \ge C\langle \tilde\varphi, \hat{g}^2\rangle,
\]
where $\textstyle \hat{g} : D \to [0, \infty) : r \mapsto \int_Vg(r, \up')\d \up'$.

}

\medskip

As with assumption (G5), (H5) can be thought of an irreducibility type condition on the fission operator. However, in this case, when $g = u_t$, (H5) implies that if there is a fission event in the stationary distribution, the process should have a better chance of survival than producing two particles with isotropic velocities. We note that this condition is satisfied in the setting of \cite{MWY} and in the isotropic case if we assume that, for all $r\in D, \up\in V$, $\mathcal{E}_{(r,\up)}[N(N-1)] \ge c > 0$ for some constant $c$, 

\medskip

We now restate the main results in the context of the NBP.

\smallskip

\begin{theorem}\label{lineardecay}Suppose that assumptions (H1), (H2), (H3), (H4) and (H5) hold and $\lambda_*=0$. Then, for all $r, \in D$ and $\up \in V$,
\[
\lim_{t\to\infty} t\mathbb{P}_{\delta_{(r,\up)}}(\zeta>t) = \frac{2\varphi(r, \up)}{\Sigma}.
\]
\end{theorem}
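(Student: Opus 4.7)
The strategy is to derive Theorem \ref{lineardecay} as an immediate specialisation of the abstract Theorem \ref{lineardecaygen}. Concretely, I would show that, with state space $E = D\times V$ and $\lambda_*=0$, the NBP hypotheses (H1)--(H5) entail the general conditions (G1)--(G5), after which the conclusion is automatic.

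Most of the required verifications are essentially bookkeeping. Condition (G1) follows from (H1), since the NBP's branching rate is $\sigma_\mathtt{f}$ and its mean number of offspring per fission is $m(r,\up) = \int_V \pi_\mathtt{f}(r,\up,\up')\d\up'$, both uniformly bounded. Condition (G2) with lead eigenvalue zero is precisely Theorem \ref{CVtheorem} under (H1)--(H2) in the critical case $\lambda_* = 0$, including the exponential rate of convergence. Condition (G3) is the almost sure extinction statement which follows from (H3) via Theorem~5.2 of \cite{SNTE}, and (G4) coincides verbatim with (H4).

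The only step requiring genuine argument is deducing (G5) from (H5). The key observation is that in the NBP all offspring at a fission event inherit the parent's spatial position, so that the abstract two-point operator $\V$ of~\eqref{2point} agrees with the velocity variance $\VV$ of~\eqref{Var} in the sense that $\V[g](r,\up) = \VV[g(r,\cdot)](r,\up)$ for $g \in L_\infty^+(D\times V)$. It then remains to bound $\langle \tilde\varphi, g\rangle^2$ above by a multiple of $\langle \tilde\varphi, \hat g^2\rangle$, for which I would use Cauchy--Schwarz in the velocity variable together with the positivity and uniform boundedness of $\tilde\varphi$ provided by Theorem~\ref{CVtheorem}. The principal obstacle is carrying this out cleanly given that $\tilde\varphi$ is only a.e. bounded away from zero on compactly embedded subsets of $D\times V$; some care near $\partial D$ will be required, likely exploiting that, as $u_t$ vanishes at the outgoing boundary, only the interior behaviour of $\tilde\varphi$ is relevant.

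With (G1)--(G5) established, Theorem~\ref{lineardecaygen} applies directly: the abstract constant $\Sigma = \langle\tilde\varphi, \gamma\V[\varphi]\rangle$ of~\eqref{sigma} specialises, by the identification $\V[\varphi] = \VV[\varphi]$ above and $\gamma=\sigma_\mathtt{f}$, to the NBP quantity defined in~\eqref{Sigma}, giving the stated asymptotic $t\mathbb{P}_{\delta_{(r,\up)}}(\zeta>t) \to 2\varphi(r,\up)/\Sigma$.
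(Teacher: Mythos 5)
Your reduction of (G1)--(G4) to (H1)--(H4) is exactly what the paper does and is fine. The gap is in the one step you yourself flag as ``requiring genuine argument'': the implication (H5) $\Rightarrow$ (G5) does not go through, and the paper explicitly does not claim it (it remarks that (H5) is \emph{weaker} than the abstract irreducibility condition on a bounded domain). Your plan is to bound $\langle\tilde\varphi,g\rangle^2$ by a multiple of $\langle\tilde\varphi,\hat g^2\rangle$. Writing $\Phi(r)=\int_V\tilde\varphi(r,\up)\,\d\up$, one has $\langle\tilde\varphi,\hat g^2\rangle=\int_D\Phi(r)\hat g(r)^2\,\d r$, while Cauchy--Schwarz gives $\langle\tilde\varphi,g\rangle^2\le \norm{\tilde\varphi}_\infty^2\,|D|\int_D\hat g(r)^2\,\d r$; to close the loop you need $\Phi$ bounded below on all of $D$, but Theorem \ref{CVtheorem} only gives $\tilde\varphi$ bounded away from zero on compactly embedded subsets, and it degenerates at $\partial D$ where particles are killed. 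Your suggested fix --- that only the interior of $D$ matters because $u_t$ vanishes at the boundary --- does not rescue this: (G5) must hold for \emph{all} $g\in L_\infty^+(D\times V)$, and even for $g=u_t$ the survival probability vanishes only on the outgoing part of the phase boundary, and one would anyway need the resulting constant to be uniform in $t$.

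The paper's route is different: it does not establish (G5) but instead reruns the one place in the proof of Theorem \ref{lineardecaygen} where (G5) is used, namely the differential inequality for $a(t)$ in Lemma \ref{lem: unif-small}. Using (H5) directly one gets $a'(t)\le -C\int_{D\times V}\tilde\varphi(r,\up)\hat u_t(r)^2\,\d r\,\d\up$, and a H\"older/Cauchy--Schwarz argument over the bounded domain $D$ yields $a(t)^3\le C\norm{u_t}_\infty\int_{D\times V}\tilde\varphi(r,\up)\hat u_t(r)^2\,\d r\,\d\up$, hence $a'(t)\le -C a(t)^3/\norm{u_t}_\infty$. Since $\norm{u_t}_\infty\to 0$ was already established without (G5), integrating gives only the coarse bound $a(t)=O(t^{-1/2})$; this is then bootstrapped through Lemma \ref{lem: iter} (applied with $\eta=1/2$) to recover the $O(t^{-1})$ bounds, after which the remainder of the general proof applies verbatim and yields the stated constant $2\varphi(r,\up)/\Sigma$. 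To repair your argument you would need to either prove this cubic-inequality-plus-bootstrap step or impose a genuinely stronger hypothesis than (H5).
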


\medskip

\begin{theorem}[Yaglom limit for critical neutron transport]\label{Y} Suppose that assumptions (H1), (H2), (H3), (H4) and (H5) hold with $\lambda_*=0$. For $f\in L^+_\infty(D\times V)$ and for all $r\in D$ and $\up\in V$, 
\begin{equation}
\lim_{t\to\infty}\mathbb{E}_{\delta_{(r,\up)}}\left[\left. \exp\left( -\theta \frac{ \langle f, X_t \rangle}{t}\right)\right|\zeta>t\right] = \frac{1}{1 + \langle \tilde\varphi, f\rangle\Sigma  \theta/2}.
\label{Yaglom}
\end{equation}
\end{theorem}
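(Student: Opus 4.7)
The plan is to deduce Theorem \ref{Y} directly from Theorem \ref{Ygen} by verifying that the NBP under assumptions (H1)--(H5) with $\lambda_* = 0$ fulfils the general hypotheses (G1)--(G5) on the state space $E = D\times V$, at which point Theorem \ref{Ygen} applies verbatim.

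The verification of (G1)--(G4) is essentially immediate from the identifications set up in this section. First, (H1) yields (G1): the total branching rate $\gamma = \sigma_\mathtt{s} + \sigma_\mathtt{f}$ is uniformly bounded and, noting that scattering produces exactly one offspring while fission has mean offspring count $\int_V \pi_\mathtt{f}(r,\up,\up')\,\d\up'$, so too is the mean offspring number $m(r,\up)$. Second, (G2) with leading eigenvalue $\lambda_* = 0$ is precisely the conclusion of Theorem \ref{CVtheorem} under (H1) and (H2). Third, (G3) follows from (H3) via Theorem~5.2 of \cite{SNTE}. Finally, (G4) is just (H4) restated.

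The main technical step is to derive (G5) from (H5). The key observation is that scattering events in the NBP produce a single offspring and so contribute zero to the double sum defining $\V$ in \eqref{2point}; accordingly,
\[
\gamma(r,\up)\V[g](r,\up) = \sigma_\mathtt{f}(r,\up)\VV[g(r,\cdot)](r,\up),
\]
for every $g \in L^+_\infty(D\times V)$, so that (H5) already reads $\langle\tilde\varphi,\gamma\V[g]\rangle \ge C\langle\tilde\varphi,\hat g^2\rangle$. What remains is to dominate $\langle\tilde\varphi,g\rangle^2$ by a positive constant multiple of $\langle\tilde\varphi,\hat g^2\rangle$. I would chain this as follows: the uniform upper bound on $\tilde\varphi$ from Theorem~\ref{CVtheorem} gives $\langle\tilde\varphi,g\rangle \le \|\tilde\varphi\|_\infty \int_D \hat g(r)\,\d r$; Cauchy--Schwarz over the bounded set $D$ then produces $\bigl(\int_D \hat g(r)\,\d r\bigr)^2 \le |D|\int_D \hat g(r)^2\,\d r$; and a lower bound of the form $\int_V \tilde\varphi(r,\up)\,\d\up \ge c > 0$ for almost every $r \in D$ would allow one to replace $\int_D \hat g^2\,\d r$ by $c^{-1}\langle\tilde\varphi,\hat g^2\rangle$, completing the chain. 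The delicate point, which I expect to be the main obstacle, is establishing this required positivity of the $\up$-marginal of $\tilde\varphi$: Theorem~\ref{CVtheorem} only supplies positivity of $\tilde\varphi$ on compactly embedded subsets of $D\times V$, and one must extract the needed uniform marginal positivity either from the irreducibility furnished by (H2) applied to the adjoint transport semigroup, or by imposing it as a mild additional structural hypothesis.

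Once (G1)--(G5) have been verified, Theorem \ref{Ygen} applies and yields \eqref{Yaglom} directly, upon observing that the constant $\Sigma$ as defined for the NBP in \eqref{Sigma} coincides with the general definition in \eqref{sigma} under the identification $\gamma\V[\varphi] = \sigma_\mathtt{f}\VV[\varphi]$ noted above.
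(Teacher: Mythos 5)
Your reduction strategy is the right instinct, and the verification of (G1)--(G4) is fine (indeed $\gamma\V[g]=\sigma_{\mathtt f}\VV[g]$ since scattering is absorbed into the motion and only fission branches). The genuine gap is exactly where you suspect it: the step (H5) $\Rightarrow$ (G5). Your chain requires a uniform lower bound $\int_V\tilde\varphi(r,\up)\,\d\up\ge c>0$ for a.e.\ $r\in D$ (or, what would also suffice, $\int_D\bigl(\int_V\tilde\varphi(r,\up)\,\d\up\bigr)^{-1}\d r<\infty$), and Theorem \ref{CVtheorem} only guarantees that $\tilde\varphi$ is bounded away from zero on \emph{compactly embedded} subsets of $D\times V$; near $\partial D$ the left eigendensity degenerates and no such marginal bound is available. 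Proposing to "impose it as a mild additional structural hypothesis" changes the theorem. The paper explicitly acknowledges that (H5) is weaker than (G5) and therefore does \emph{not} attempt your implication.

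What the paper does instead is re-run Step 2 (the Kolmogorov limit) with (H5) in place of (G5). From \eqref{eq: diff-eq'} and (H5) one only gets
\begin{equation*}
a'(t)\;\le\;-C\int_{D}\Bigl(\int_V\tilde\varphi(r,\up)\,\d\up\Bigr)\hat u_t(r)^2\,\d r ,
\end{equation*}
and the trick is to compare this with $a(t)^3$ rather than $a(t)^2$: a H\"older inequality applied to $r\mapsto\int_V\tilde\varphi(r,\up)u_t(r,\up)\,\d\up$ together with the uniform boundedness of $\tilde\varphi$ gives $a(t)^3\le C\norm{u_t}_\infty\langle\tilde\varphi,\sigma_{\mathtt f}\VV[u_t]\rangle$, hence $a'(t)\le -C a(t)^3/\norm{u_t}_\infty$. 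Since $\norm{u_t}_\infty\to0$, this yields only the coarse bound $a(t)=O(t^{-1/2})$, which is then bootstrapped to $O(t^{-1})$ via Lemma \ref{lem: iter}; after that the identification of the constant $2/\Sigma$ and Step 3 proceed exactly as in the general case. So the correct proof circumvents (G5) entirely rather than deriving it, and your argument as written does not close without an unproven boundary estimate on $\tilde\varphi$.
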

\medskip

Due to the generality of our methods, the proofs of the above two theorems are almost identical to those given in the general MBP case. However, since (H5) is weaker than (G4) in the case where $E$ is bounded, the proof of 
Theorem \ref{lineardecay} requires some more work. More precisely, starting in the same way as in~\ref{Sectstep2} and using (H5), we obtain
\begin{align}
\notag
a'(t) =  \langle \tilde \varphi, \mathscr{G}[u_{t}]\rangle 
&\le -C_{1}\langle \tilde \varphi,  \sigma_\mathtt{f}\mathsf{V}\!\!\!\mathsf{V}[u_{t}]\rangle  \\ \label{bd: at1}
& \le - C_{2} \int_{D \times V}\tilde\varphi(r, \up)\int_{V}\int_V u_t(r, \up_1)u_t(r, \up_2)\d r\,\d\up\,\d\up_1\d\up_2.
\end{align}
Now, noting that  H\"{o}lder's inequality implies that 
\[
\left(\int_{D} f(r) \d r\right)^{3} \le C_{3}\int_{D} |f(r)|^{3} \d r, 
\] 
we can apply this to $r\mapsto \int_{V}\tilde\varphi(r, \up) u_t(r, \up)\d\up$ to obtain
\begin{align*}
  a(t)^3 &= \left(\int_{D \times V}\tilde\varphi(r, \up) u_t(r, \up)\d r\d\up \right)^3  \le C_3\int_D\left(\int_V\tilde\varphi(r,\up)u_t(r, \up)\d\up \right)^3\d r \\
  &= C_3\int_D \d r \int_{V \times V \times V}\tilde\varphi(r, \up)\tilde\varphi(r, \up_1)\tilde\varphi(r, \up_2)
  u_t(r, \up)u_t(r, \up_1)u_t(r, \up_2)\d\up\,\d\up_1\d\up_2 \\
  &\le C_4\norm{u_{t}}_{\infty} \int_{D \times V}\tilde\varphi(r, \up)\int_{V}\int_V u_t(r, \up_1)u_t(r, \up_2)\d r\,\d\up\,\d\up_1\d\up_2,
\end{align*}
since $\tilde\varphi$ is uniformly bounded. Comparing this with \eqref{bd: at1}, we have
\[
a'(t)  \le - \frac{C_{5}}{\norm{u_{t}}_{\infty}} a(t)^{3}, \quad t\ge t_{0}. 
\]
We have previously seen that $\norm{u_{t}}_{\infty}\to 0$, which implies that for any fixed $\epsilon>0$, there exists $t'_{0}=t'_{0}(\epsilon)$ so that 
\[
a'(t) \le  - \frac{C_{5}}{\epsilon^{2}} a(t)^{3}, \quad t\ge t_{1}:=\max(t_{0}, t'_{0}). 
\]
Concluding as in~\ref{Sectstep2}, we obtain coarse upper and lower bounds but now of the order $1/\sqrt{t}$. 
Then, Lemma \ref{lem: iter} allows us to bootstrap these bounds to obtain bounds of order $1/t$. From there, we can then conclude as in the general case. 

\medskip

As noted in the introduction, the work of Mori et al. \cite{MWY} gives the only known results in the NBP setting. In that  paper, the authors took  advantage of an isotropic assumption for scattering and the point process $\mathcal{Z}$, as well as homogeneous cross-sections, which allows the system to be treated as if it has local branching. More precisely, $\mathcal{Z}$ is made up of a random number of offspring, each of which are independent and uniformly distributed on $V$, which is otherwise taken to be the unit sphere $\mathbb{S}^2$.  The anisotropic behaviour of the fission we consider in this paper means that, unlike \cite{MWY},  we genuinely cannot treat branching as a local phenomenon, and thus require  more involved analysis, which may as well be delivered for general BMPs, as indeed we have done.

\section*{Acknowledgements} {We would like to thank an anonymous referee and the AE, who made a number of very helpful suggestions. We would also like to thank Ellen Powell for useful comments and} our industrial partners, specifically Prof. P. Smith and Dr. G. Dobson of the ANSWERS group from Jacobs, for the use of the picture in Figure \ref{fig}.

\appendix

\section{Combinatorial results}
Here we give some combinatorial results that are of importance to us. The first is the multinomial expansion formula, the second is a variant of the multinomial formula for sums of i.i.d. random variables. The third is an identity which does not appear to be easily available in the literature, but is nonetheless relatively straightforward to prove. 

\begin{lemma}[Multinomial identity] Suppose that $a_1,\cdots, a_n$ are real-valued and $k\in\mathbb{N}$. Then 
\begin{equation}
\left(\sum_{i = 1}^n a_i \right)^k  = \sum_{[k_1,\cdots, k_n]}{k \choose k_1, \cdots, k_n} \prod_{i =1}^n a_i^{k_i},
\label{ais}
\end{equation}
where the sum is over the set $[k_1,\cdots, k_n]$ of all combinations of non-negative $k_1, \dots, k_n$ such that $\textstyle \sum_{i= 1}^n k_i= k$,  which are assigned as power indices to $(a_1,\cdots, a_n)$.
\end{lemma}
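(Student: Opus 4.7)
The plan is to prove the multinomial identity by a direct combinatorial argument, which is cleaner than an induction on $n$ via the binomial theorem (though both work). The starting point is to view $\bigl(\sum_{i=1}^n a_i\bigr)^k$ as a product of $k$ identical factors, each equal to $a_1 + \cdots + a_n$, and then expand by the distributive law without collecting terms.

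More precisely, I would label the $k$ factors by $\{1,\ldots,k\}$ and observe that the fully-distributed expansion is a sum of $n^k$ monomials, each one indexed by a function $f : \{1,\ldots,k\} \to \{1,\ldots,n\}$ telling us which summand $a_{f(j)}$ was chosen from factor $j$. The corresponding monomial is $\prod_{j=1}^k a_{f(j)} = \prod_{i=1}^n a_i^{k_i(f)}$, where $k_i(f) := |f^{-1}(i)|$ records how many times index $i$ was picked. Thus
\[
\Bigl(\sum_{i=1}^n a_i\Bigr)^k \;=\; \sum_{f : \{1,\ldots,k\}\to\{1,\ldots,n\}} \prod_{i=1}^n a_i^{k_i(f)}.
\]

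Next I would partition the set of functions $f$ according to the composition vector $(k_1,\ldots,k_n)$, which necessarily satisfies $k_i \geq 0$ and $\sum_{i=1}^n k_i = k$. For each such composition, the number of functions $f$ with $k_i(f) = k_i$ for all $i$ equals the number of ordered partitions of $\{1,\ldots,k\}$ into labelled blocks of sizes $k_1,\ldots,k_n$, which is exactly the multinomial coefficient $\binom{k}{k_1,\ldots,k_n} = k!/(k_1!\cdots k_n!)$. Summing the identical monomial $\prod_i a_i^{k_i}$ over all such $f$ and then summing over compositions $[k_1,\ldots,k_n]$ yields the claimed identity.

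There is no real obstacle here: the only step that calls for care is the bookkeeping that converts a function $f$ into its multiplicity profile and verifies that the count of preimages with a fixed profile equals $\binom{k}{k_1,\ldots,k_n}$, which is itself a standard consequence of repeatedly applying the binomial coefficient identity $\binom{k}{k_1}\binom{k-k_1}{k_2}\cdots = k!/(k_1!\cdots k_n!)$. As a sanity check, setting $a_1 = \cdots = a_n = 1$ recovers $n^k = \sum_{[k_1,\ldots,k_n]} \binom{k}{k_1,\ldots,k_n}$, consistent with the $n^k$ initial monomials.
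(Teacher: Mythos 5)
Your proof is correct; the paper states this multinomial identity as a standard fact without proof, so there is nothing to compare against directly. Your bracket-expansion argument (choosing one summand from each of the $k$ factors and counting functions with a fixed multiplicity profile) is exactly the combinatorial viewpoint the paper itself invokes informally in its proof of the subsequent i.i.d.\ variant, Lemma A.2, so your approach is entirely in the spirit of the text.
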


The set $[k_1,\cdots, k_n]$ is very complex, particularly when one takes into account the fact that, for example, when $n>k$, some of the $k_i$ are necessarily zero. The combinatorics  become more complicated when we  apply this lemma to the setting that the $a_i$ are iid random variables and the left-hand side of \eqref{ais} is averaged. 

\begin{lemma}\label{iidmulti}
Suppose that $Y_1,\dots Y_n$ are iid random variables which are equal in distribution to $(Y,{\rm P})$, then
\begin{equation}
{\rm E}\left[\left(\sum_{i = 1}^n Y_i \right)^k\right]
=\sum_{j = 1}^k 
{n \choose j }{\mathbf{1}_{(j\leq n)}}
 \sum_{[k_1,\dots, k_j]_+}{k \choose k_1, \cdots, k_j} \prod_{i =1}^j {\rm E}[Y^{k_i}]
 \label{j!}
\end{equation}
{where the sum is over the set $[k_1,\dots, k_j]_+$ of all combinations of strictly positive $k_1,\dots ,k_j$ such that $\textstyle \sum_{i= 1}^j k_i= k$.
} 
\end{lemma}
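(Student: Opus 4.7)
The plan is to derive \eqref{j!} directly from the multinomial identity \eqref{ais} by taking expectations and reorganising the resulting sum according to how many of the exponents $k_1,\dots,k_n$ are strictly positive. This is essentially bookkeeping, and I do not anticipate any serious obstacle; the only point requiring care is making sure the combinatorial reindexing matches the definition of $[k_1,\dots,k_j]_+$.

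First I would apply \eqref{ais} with $a_i=Y_i$ and take expectations. Interchanging expectation and the finite sum, and using the fact that $Y_1,\dots,Y_n$ are iid copies of $Y$, I obtain
\[
\mathrm{E}\!\left[\left(\sum_{i=1}^n Y_i\right)^{\!k}\right]
= \sum_{[k_1,\dots,k_n]} \binom{k}{k_1,\dots,k_n}\prod_{i=1}^n \mathrm{E}[Y^{k_i}],
\]
where, by convention, $\mathrm{E}[Y^0]=1$ and $0!=1$ so that the zero entries contribute trivially to both the multinomial coefficient and the product.

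Next I would partition the index set of the sum according to the cardinality $j$ of the support $S:=\{i : k_i>0\}$. Writing $S=\{i_1<\dots<i_j\}\subset\{1,\dots,n\}$, there are $\binom{n}{j}$ such subsets, and this is only nonempty when $j\leq n$, which is precisely the role of $\mathbf{1}_{\{j\leq n\}}$. For a fixed $S$ the corresponding $(k_{i_1},\dots,k_{i_j})$ ranges over tuples of strictly positive integers summing to $k$, which is exactly the set $[k_1,\dots,k_j]_+$ after relabelling. Moreover, since the zero entries contribute $1$ to the multinomial coefficient, $\binom{k}{k_1,\dots,k_n}=\binom{k}{k_{i_1},\dots,k_{i_j}}$, and by the iid assumption $\prod_{i=1}^n\mathrm{E}[Y^{k_i}]=\prod_{\ell=1}^j \mathrm{E}[Y^{k_{i_\ell}}]$, independently of the particular choice of $S$.

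Finally, noting that the range of $j$ is $1\leq j\leq k$ (since $k$ cannot be written as a sum of more than $k$ strictly positive integers), collecting these observations gives
\[
\sum_{j=1}^{k}\binom{n}{j}\mathbf{1}_{\{j\leq n\}}\sum_{[k_1,\dots,k_j]_+}\binom{k}{k_1,\dots,k_j}\prod_{\ell=1}^{j}\mathrm{E}[Y^{k_\ell}],
\]
which is \eqref{j!}. The only point to double-check is that the reindexing $(k_{i_1},\dots,k_{i_j})\mapsto (k_1,\dots,k_j)$ of the inner sum is well defined, but this is immediate from the iid property, so no genuine difficulty arises.
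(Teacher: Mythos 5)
Your proposal is correct and follows essentially the same route as the paper's proof: both arguments decompose the multinomial sum \eqref{ais} according to the set of indices with strictly positive exponent, count the $\binom{n}{j}$ choices of that support, and use the iid property to collapse the product to $\prod_{\ell=1}^j \mathrm{E}[Y^{k_\ell}]$. The only cosmetic difference is that you take expectations before reorganising the sum while the paper reorganises first, which changes nothing of substance.
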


\begin{proof}[Proof of Lemma \ref{iidmulti}]We start by re-examining \eqref{ais}.
To avoid having to  deal with  the zero values of $k_i$ in \eqref{ais} we would like to write 
{\color{black}
\[
[k_1,\cdots, k_n]  = \bigcup_{j = 1}^{k\wedge n} \bigcup_{\{i_1,\cdots,i_j\}\in \mathcal{I}_j}\bigcup_{\{K_{i_1},\cdots,K_{i_j}\}\in [k_1,\cdots, k_j]_+}
\{K_1,\cdots, K_n \}|_{K_\ell = 0, \ell \not\in \{i_1,\cdots,i_j\}}
\]
where the first union in $j$ identifies the number $K_i$ from $\{K_1,\cdots, K_n\}\in [k_1, \cdots, k_n]$  which are strictly positive, the second union  is over, $\mathcal{I}_j$, the set of of all possible choices of $j$ indices $\{i_1,\cdots, i_j\}$ from $\{1,\cdots n\}$ and, the third union is taken over  $[k_1,\cdots,k_j]_+$, the set of combinations of $j$   strictly positive integers $k_1,\cdots, k_j$, such that $\textstyle \sum_{\ell = 1}^j k_\ell = k$, that are assigned to $\{K_{i_1},\cdots, K_{i_j}\}$, thereby making the element $\{K_1,\cdots, K_n \}|_{K_\ell = 0, \ell \not\in \{i_1,\cdots,i_j\}}$.  
}
{Note that the first union runs no further than $n$ 
as one cannot select more than $n$ different of the $Y_i$-s across the $k$ brackets in the product $\textstyle(Y_1+\cdots+Y_n)^k$.
}

\medskip

We can thus re-write \eqref{ais}
\begin{equation}
\left(\sum_{i = 1}^n Y_i \right)^k 
 = \sum_{j = 1}^k{\mathbf{1}_{(j\leq n)}}\sum_{\{i_1,\cdots,i_j\}\in \mathcal{I}_j}
\sum_{[k_1,\cdots,k_j]_+}
{k\choose k_1,\cdots, k_j}
\prod_{\ell =1}^j Y_{i_\ell}^{k_{\ell}}.
\label{ais2}
\end{equation}
In essence, thinking of the left-hand side as  $k$ brackets of the form $(Y_1+ \cdots + Y_n)$ that are multiplied together, we first pick the indices we want to appear in multiplying one term from each bracket, then look at the number of ways these indices can be selected from each of the $k$ brackets (this explains the appearance of the multinomial coefficient again).

\medskip

The fact that  $Y_1,\cdots,Y_n$ are iid means that 
\[
 {\rm E}\left[\prod_{\ell= 1}^j Y_{i_\ell}^{k_\ell}\right] =  \prod_{\ell=1}^j{\rm E}[Y^{k_i}],
\]
which is the same for multiple combinations of $\{i_1,\cdots, i_j\} $ and $\{k_1,\cdots, k_j\}$. Indeed, for a given $j\in\{1,\cdots, k\}$ and combination $[k_1,\cdots, k_j]_+$ there are 
$
\textstyle{ {n \choose j }}$  ways of {picking $j\leq n$ indices} from $n$.
 In conclusion \eqref{j!} holds.
\end{proof}

We conclude with an unusual combinatorial lemma that we could not find in the literature. 
\begin{lemma}\label{combinatoriclemma}We have
we have 
\[
(k+1)! = \sum_{j = 1}^k2^j \sum_{[k_1,\cdots, k_j]_+} \frac{1}{j!}{k \choose k_1, \cdots, k_j} \prod_{i =1}^j(k_i-1)!, \qquad k\in\mathbb{N}.
\]
\end{lemma}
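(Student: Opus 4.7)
The plan is to reduce the identity to a simple generating function calculation. First, I would expand the multinomial coefficient as $\binom{k}{k_1,\ldots,k_j} = k!/(k_1!\cdots k_j!)$ and absorb the factors $(k_i-1)!$, so that the claim becomes equivalent to
\begin{equation}
k+1 \;=\; \sum_{j=1}^{k} \frac{2^{j}}{j!} \sum_{[k_1,\ldots,k_j]_+} \prod_{i=1}^{j}\frac{1}{k_i}.
\label{eq:reduced}
\end{equation}

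Next, I would interpret the inner sum via generating functions. Since $\sum_{m\ge 1} x^m/m = -\log(1-x)$, and the sum in \eqref{eq:reduced} is over ordered tuples of positive integers with fixed sum $k$, one has
\[
\sum_{[k_1,\ldots,k_j]_+} \prod_{i=1}^{j}\frac{1}{k_i} \;=\; [x^k]\bigl(-\log(1-x)\bigr)^{j},
\]
where $[x^k]$ denotes the coefficient of $x^k$. Substituting this into the right-hand side of \eqref{eq:reduced} and exchanging the (finite, since the inner object has no $x^m$ for $m<j$) sums gives
\[
\sum_{j=1}^{k} \frac{2^{j}}{j!}\,[x^k]\bigl(-\log(1-x)\bigr)^{j} \;=\; [x^k]\sum_{j\ge 1}\frac{\bigl(-2\log(1-x)\bigr)^{j}}{j!} \;=\; [x^k]\!\left(\frac{1}{(1-x)^{2}} - 1\right).
\]

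Finally, since $1/(1-x)^2 = \sum_{k\ge 0}(k+1)x^k$, for any $k\ge 1$ the coefficient $[x^k]\bigl((1-x)^{-2}-1\bigr)$ equals $k+1$, which establishes \eqref{eq:reduced} and hence the lemma.

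The only step with any content is the recognition that the weighted compositions sum is a coefficient of a power of $-\log(1-x)$; everything else is bookkeeping. I do not anticipate a genuine obstacle, though one should be slightly careful that the upper limit $j\le k$ in the original sum can be replaced by $j\ge 1$ without changing the extracted coefficient (since $(-\log(1-x))^j$ starts at order $x^j$).
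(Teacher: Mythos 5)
Your argument is correct, and it follows a genuinely different route from the paper's. After the same initial cancellation $\binom{k}{k_1,\ldots,k_j}\prod_i(k_i-1)! = k!\prod_i k_i^{-1}$, the paper regroups the compositions by their block-size multiplicities $(\ell_1,\ldots,\ell_{k-j+1})$, recognises the resulting sum as the Bell polynomial $B_{k,j}(0!,\ldots,(k-j)!)$, identifies this with the unsigned Stirling number of the first kind, and then evaluates $\sum_{j=1}^k 2^j\left[\substack{k\\ \\ j}\right] = 2\cdot 3\cdots(k+1)=(k+1)!$ via the rising-factorial generating polynomial. You instead read the composition sum $\sum_{[k_1,\ldots,k_j]_+}\prod_i k_i^{-1}$ directly as $[x^k](-\log(1-x))^j$ and sum the exponential series to get $(1-x)^{-2}$; this is essentially a self-contained rederivation of the exponential generating function $\sum_k \left[\substack{k\\ \\ j}\right]x^k/k! = (-\log(1-x))^j/j!$ that underlies the paper's Stirling-number identities, so it buys you an argument that needs no citation to combinatorics references, at the cost of making the partition-counting interpretation less visible. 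One point worth making explicit (you do state it, and it is the correct reading): the identity $\sum_{[k_1,\ldots,k_j]_+}\prod_i k_i^{-1} = [x^k](-\log(1-x))^j$ requires $[k_1,\ldots,k_j]_+$ to range over \emph{ordered} compositions of $k$ into $j$ positive parts; this is consistent with how the set is used in Lemma \ref{iidmulti} (where an unordered reading would make that lemma false), and the factor $1/j!$ in the present lemma is precisely the order-compensating weight.
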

\begin{proof}
Recall that, for $j,k\in \mathbb{N}$ such that $1\leq j\leq k$, the $(k,j)$-Bell polynomial is given  by
\[
B_{k,j}(x_1,\cdots, x_{k-j+1}): = 
\sum_{(\ell_1,\cdots, \ell_{k-j+1})}\frac{k!}{\ell_1!\cdots\ell_{k-j+1}!} \prod_{p= 1}^{k-j+1}\left(\frac{x_\ell}{p!}\right)^{\ell_p},
\]
where the sum is taken over all combinations of $(\ell_1,\cdots, \ell_{k-j+1})$ such that 
\begin{equation}
\sum_{i =1}^{k-j+1} \ell_i = j\,\text{ and }\,\sum_{i=1}^{k-j+1} i \ell_i = k.
\label{sums}
\end{equation}
One should think of $(\ell_1,\cdots,\ell_{k-j+1})$ as  representing the partition of $\{1,\cdots,k\}$ into $1\leq j\leq k$ non-empty sub blocks. The quantity $\ell_i$ is thus the number of blocks of size $i$ so that the first sum in \eqref{sums} indicates that the total number of blocks is $j$ and the total number of items in these blocks is $k$. 
Note that some of the values of $\ell_i$ may be zero. Note also that the index runs up to $k -j +1$ as the largest of the $j$ blocks can be obtained by having $j-1$ blocks of size 1 and 1 block of size $k-j+1$. 
As such, the $(k,j)$-Bell polynomial tells us the number of ways of generating $j$ non-empty sub blocks from $\{1,\cdots, k\}$ by setting $x_1 = \cdots = x_{k-j+1} = 1$.

\medskip

If a configuration $\{k_1,\cdots, k_j\}$ has block count $\{\ell_1,\cdots, \ell_{k-j+1}\}$, then 
\[
\prod_{i = 1}^j(k_i -1)! = \prod_{p= 1}^{k-j+1} ((p-1)!)^{\ell_p}.
\]
For each $j=1,\cdots, k$, counting the functionals $\textstyle \prod_{i = 1}^j(k_i -1)!$ with its respective multiplicity is equivalent to counting the functionals $\textstyle \prod_{p= 1}^{k-j+1} ((p-1)!)^{\ell_p}$ with their respective multiplicity. 

\medskip

Considering again the multinomial formula \eqref{j!}, we  see that for each $j=1,\cdots, k$ and $\{k_1, \cdots, k_j\}\in[k_1,\cdots, k_j]_+$ the functional  $\textstyle \prod_{i = 1}^j(k_i -1)!$  has a respective multiplicity given by
\[
{\frac{1}{j!}}{k \choose k_1,\cdots, k_j}.
\]
Note, the factor $1/j!$ is present as we are not concerned about the order of the blocks. Similarly, for each $\{\ell_1,\cdots, \ell_{k-j+1}\}\in(\ell_1,\cdots, \ell_{k-j+1})$, the functional $\textstyle \prod_{p= 1}^{k-j+1} ((p-1)!)^{\ell_p}$ has multiplicity
\[
\frac{1}{\ell_1!\cdots\ell_{k-j+1}!}\times\frac{k!}{(1!)^{\ell_1}\cdots ((k-j+1)!)^{\ell_{k-j+1}}},
\]
where the second fraction is the combinatoric for selecting $\ell_1$ blocks of size 1, $\ell_2$ blocks of size 2.... and $\ell_{k-j+1}$ blocks of size $k-j+1$, and the first fraction factors out the multiplicity of order of blocks within each size category.


It follows that 
\begin{align}
&\sum_{j = 1}^k   2^j   \sum_{[k_1,\cdots,k_j]_+} {\frac{1}{j!}}{k \choose k_1,\cdots, k_j}\prod_{i = 1}^j(k_i -1)! \notag\\
&=\sum_{j = 1}^k 2^j  
\sum_{(\ell_1,\cdots, \ell_{k-j+1})} \frac{1}{\ell_1!\cdots\ell_{k-j+1} !}  \frac{k!}{\prod_{p= 1}^{k-j+1}(p!)^{\ell_p}} \prod_{p= 1}^{k-j+1}\left((p-1)!\right)^{\ell_p}\notag\\
&=\sum_{j = 1}^k 2^j  
\sum_{(\ell_1,\cdots, \ell_{k-j+1})}\frac{k!}{\ell_1!\cdots\ell_{k-j+1}! } \prod_{p= 1}^{k-j+1}\left(\frac{(p-1)!}{p!}\right)^{\ell_p}\notag\\
&=\sum_{j = 1}^k 2^j  B_{k,j}(0!,\cdots, (k-j)!).
\label{piece1}
\end{align}
Standard texts on combinatorics (see e.g \cite{Comtet}) identify 
\begin{equation}
B_{k,j}(0!,\cdots, (k-j)!) =  \left[\substack{\mbox{ $k$ }\\\ \\ \mbox{ $j$ }}\right],
\label{piece2}
\end{equation}
which is a Stirling number of the first kind. Moreover, for the latter, it is known that 
\begin{equation}
\sum_{j = 1}^k x^j   \left[\substack{\mbox{ $k$ }\\ \\\mbox{ $j$ }}\right] = x(x+1)\cdots (x + k - 1).
\label{piece3}
\end{equation}
Putting the pieces together from \eqref{piece1}, \eqref{piece2} and \eqref{piece3}, the desired result holds.
\end{proof}

\bibliography{references}{}
\bibliographystyle{plain}

\end{document}